%% Self-contained source for arxiv
%% (But: need to include figures folder)
\documentclass[11pt]{article}
\usepackage{amsmath, amsthm, amssymb, mathtools, url, hyperref,
  subfig, mathtools, graphicx}
\usepackage[noadjust]{cite}
\usepackage[utf8]{inputenc}
\usepackage{soul}
\usepackage{xcolor}
\usepackage{xypic}
\usepackage[margin=1in]{geometry}

%=======================================
%============= Preamble ================
%=======================================

%=======================================
%           Theorem styles
%=======================================

\swapnumbers

\theoremstyle{plain}
\newtheorem{theorem}{\hspace{0em}Theorem}[section]
\newtheorem*{theorem*}{Theorem}
\newtheorem{corollary}[theorem]{\hspace{0em}Corollary}

\newtheorem*{question*}{Question}

\newtheorem{lemma}[theorem]{\hspace{0em}Lemma}
\newtheorem{proposition}[theorem]{\hspace{0em}Proposition}
\theoremstyle{definition}
\newtheorem{definition}[theorem]{\hspace{0em}Definition}
\newtheorem*{definition*}{Definition}
\newtheorem{remark}[theorem]{\hspace{0em}Remark}

\newtheorem{notation}[theorem]{\hspace{0em}Notation}

\newtheorem*{notation*}{Notation}

\newtheoremstyle{myclaim}% name
  {1ex}%      Space above
  {1ex}%      Space below
  {\it}%      Body font
  {\parindent}% Indent amount (empty = no indent, \parindent = para indent)
  {\it}%      Thm head font
  {.}%        Punctuation after thm head
  { }%        Space after thm head: " " = normal interword space;
     %        \newline = linebreak
  {}%         Thm head spec (can be left empty, meaning `normal')
\theoremstyle{myclaim}

\newtheorem*{claim*}{Claim}

\newtheoremstyle{note}% name
  {}%      Space above
  {}%      Space below
  {}%      Body font
  {}%      Indent amount (empty = no indent, \parindent = para indent)
  {\normalfont}% Thm head font
  {.}%     Punctuation after thm head
  {.5em}%  Space after thm head: " " = normal interword space;
        %  \newline = linebreak
  {}%      Thm head spec (can be left empty, meaning `normal')

\theoremstyle{note}

%========================================
%           Notation macros
%========================================

\newcommand{\Z}{\ensuremath{\mathbb{Z}}}

\newcommand{\C}{\ensuremath{\mathbb{C}}}
\newcommand{\T}{\ensuremath{\mathbb{T}}}
\newcommand{\F}{\ensuremath{\mathbb{F}}}

\DeclareMathOperator{\id}{id}
           % decomposition rank
         % Elliott invariant
           % normalized trace
           % non-normalized trace
\DeclareMathOperator{\GL}{GL}           
         % for induced rep's
       
           % modulus of quasidiagonality
\DeclareMathOperator{\Hom}{Hom}

           % Chern character
           % Todd class
             % for matrices: \M_n(A) etc.

%========================================
%         Other useful macros
%========================================

%========================================
%         Misc. customization
%========================================

%% Number equations according to sections
\numberwithin{equation}{section}

%% A nicer looking square for QED's

%% No double spaces after periods
\frenchspacing

%% I don't like typewriter fonts for URLs
\urlstyle{same}

%% use for figures wider than the text width
\makeatletter
\newcommand*{\centerfloat}{%
  \parindent \z@
  \leftskip \z@ \@plus 1fil \@minus \textwidth
  \rightskip\leftskip
  \parfillskip \z@skip}
\makeatother

%% fixes spacing with \left and \right brackets etc.
\let\originalleft\left
\let\originalright\right
\renewcommand{\left}{\mathopen{}\mathclose\bgroup\originalleft}
\renewcommand{\right}{\aftergroup\egroup\originalright}

%% Upright integrals - stolen from JLMS
%% commands defined:
%% \int         will be upright (display mode)
%% \smallint    will be upright (text mode)
%% \oldint      will be slanted (display/text mode)
%% (\oldsmallint available, but not necessary)

\DeclareSymbolFont{EUEX}{U}{euex}{m}{n}

\DeclareSymbolFont{euexlargesymbols}{U}{euex}{m}{n}
\DeclareMathSymbol{\intop}{\mathop}{euexlargesymbols}{"52}
     \def\int{\intop\nolimits}

\DeclareSymbolFont{euexsymbols}     {U}{euex}{m}{n}
\DeclareMathSymbol{\smallint}{\mathop}{euexsymbols}{"52}

% %% patch AMSmath environments to play nice with lineno package
% \newcommand*\patchAmsMathEnvironmentForLineno[1]{%
%   \expandafter\let\csname old#1\expandafter\endcsname\csname #1\endcsname
%   \expandafter\let\csname oldend#1\expandafter\endcsname\csname end#1\endcsname
%   \renewenvironment{#1}%
%      {\linenomath\csname old#1\endcsname}%
%      {\csname oldend#1\endcsname\endlinenomath}}%
% \newcommand*\patchBothAmsMathEnvironmentsForLineno[1]{%
%   \patchAmsMathEnvironmentForLineno{#1}%
%   \patchAmsMathEnvironmentForLineno{#1*}}%
% \AtBeginDocument{%
% \patchBothAmsMathEnvironmentsForLineno{equation}%
% \patchBothAmsMathEnvironmentsForLineno{align}%
% \patchBothAmsMathEnvironmentsForLineno{flalign}%
% \patchBothAmsMathEnvironmentsForLineno{alignat}%
% \patchBothAmsMathEnvironmentsForLineno{gather}%
% \patchBothAmsMathEnvironmentsForLineno{multline}%
% }

%%% -----------------------------------------------
%%% Document specific macros
%%% -----------------------------------------------

\newcommand{\simplex}[1]{\ensuremath{\langle #1 \rangle}}
\newcommand{\bary}[1]{\ensuremath{ \simplex{ #1 }\,\hat{}\, }}
\newcommand{\baryv}[1]{\ensuremath{ \mathring{v}_{#1} }}
\DeclareMathOperator{\dist}{dist}
\DeclareMathOperator{\U}{U}
\DeclareMathOperator{\Ext}{Ext}
\newcommand{\tu}{{U}^{\nu}}
\newcommand{\tv}{\widetilde{v}}
\newcommand{\ep}{\varepsilon}
\newcommand{\bpi}{\breve{\pi}}

\newcommand{\MR}[1]{%
  \href{http://www.ams.org/mathscinet-getitem?mr=#1}{MR #1}
}

%%% -----------------------------------------------
%%% Begin Document
%%% -----------------------------------------------

\begin{document}

\title{%
  Almost flat K-theory of classifying spaces}

\author{%
  Jos\'e R. Carri\'on%
  \thanks{Partially supported by NSF Postdoctoral Fellowship
    \#DMS-1303884 and the Center for Symmetry and Deformation at the
    University of Copenhagen.}\\
  Department of Mathematics\\
  Penn State University\\
  University Park, PA, 16803\\
  United States\\
  jcarrion@psu.edu
  \and
  Marius Dadarlat%
  \thanks{Partially supported by NSF grant \#DMS-1362824. 
  \newline
 2010 Mathematics Subject Classification 46L05 (primary), 46L80, 46L85 (secondary)}\\
  Department of Mathematics\\
  Purdue University\\
  West Lafayette, IN, 47907\\
  United States\\
  mdd@math.purdue.edu
}

\maketitle

\begin{abstract}
  We give a rigorous account and prove continuity properties for the
  correspondence between almost flat bundles on a triangularizable
  compact connected space and the quasi-representations of its
  fundamental group.  For a discrete countable group $\Gamma$ with
  finite classifying space $B\Gamma$, we study a correspondence
  between between almost flat K-theory classes on $B\Gamma$ and group
  homomorphism $K_0(C^*(\Gamma))\to \mathbb{Z}$ that are implemented
  by pairs of discrete asymptotic homomorphisms
  from $C^*(\Gamma)$ to matrix algebras.
\end{abstract}

\section{Introduction}
\label{sec:introduction}
Connes, Gromov and Moscovici \cite{Connes-Gromov-etal90} developed and
used the concepts of almost flat bundle, almost flat K-theory class
and group quasi-representation as tools for proving the Novikov
conjecture for large classes of groups.  For a compact manifold $M,$
it was shown in \cite{Connes-Gromov-etal90} that the signature with
coefficients in a (sufficiently) almost flat bundle is a homotopy
invariant. Moreover, the authors indicate that they have a
reformulation of the notion of almost flatness to bundles with
infinite dimensional fibers which allows them to show that if $\Gamma$
is a countable discrete group such that all classes of $K^0(B\Gamma)$
are almost flat (up to torsion), then $\Gamma$ satisfies the Novikov
conjecture \cite[Sec. 6]{Connes-Gromov-etal90}.

The problem of constructing nontrivial almost flat K-theory classes is
interesting in itself.  Suppose that the classifying space $B\Gamma$
of a countable discrete group $\Gamma$ admits a realization as a
finite simplicial complex.  Using results of Kasparov
\cite{Kasparov88}, Yu \cite{Yu00} and Tu \cite{Tu05}, the second-named
author showed in \cite{Dadarlat14} that if $\Gamma$ is coarsely
embeddable in a Hilbert space and the full group C*-algebra
$C^*(\Gamma)$ is quasidiagonal, then all classes in $K^0(B\Gamma)$ are
almost flat.

Inspired by \cite{Connes-Gromov-etal90}, in this paper we investigate
the correspondence between between the almost flat classes in
$K^0(B\Gamma)$ and the group homomorphisms $h\colon
K_0(C^*(\Gamma))\to \Z$ that are implemented by pairs of discrete
asymptotic homomorphisms $\{\pi^\pm_n\colon C^*(\Gamma)\to
M_{k(n)}(\C)\}_{n=1}^\infty$, in the sense that $h(x)\equiv
(\pi^{+}_n)_\sharp(x)-(\pi^{-}_n)_\sharp(x)$ for $x\in
K_0(C^*(\Gamma))$; see Definition~\ref{def:pushforward}.  It turns out
that this correspondence is one-to-one (modulo torsion) if the full
assembly map $\mu\colon K^0(B\Gamma) \to K_0(C^*(\Gamma))$ is
bijective.  See Theorem~\ref{thm_cor:almost-flat-surj} and its
generalization with coefficients
Theorem~\ref{thm:main-correspondence}.

We take this opportunity to give a self-contained presentation of the
correspondence (and its continuity properties) between almost flat
bundles on a connected triangularizable compact space and the
quasi-representations of its fundamental group; see
Theorems~\ref{thm:main-correspondence}, \ref{thm:approx-inverse}. As
far as we can tell, while this correspondence was more or less known
to the experts, it has not been well documented in the literature.  We
rely on work of Phillips and Stone \cite{Phillips-Stone86,
  Phillips-Stone90}. These authors studied topological invariants
associated with lattice gauge fields via a construction which
associates an almost flat bundle to a lattice gauge field with
controlled distortion and small modulus of continuity.

The following terminology is useful for further discussion of our
results.  If $\mathbf{k}=(k(n))_{n=1}^\infty$ is a sequence of natural
numbers, we write $Q_{\mathbf{k}}=\prod_{n=1}^\infty M_{k(n)}(\C) /
\sum_{n=1}^\infty M_{k(n)}(\C)$. Recall that a separable C*-algebra
$A$ is MF is it embeds as a C*-subalgebra of $Q_{\mathbf{k}}$ for some
$\mathbf{k}$.  In other words, there are sufficiently many
$*$-homomorphisms $A \to Q_{\mathbf{k}}$ to capture the norm of the
elements in $A$, \cite{Blackadar-Kirchberg97}.  By analogy, let us say
that a separable C*-algebra $A$ is ``K-theoretically MF'' if there
exist sufficiently many $*$-homomorphisms $A \to Q_{\mathbf{k}}$ to
capture the K-theory of $A$ in the following sense: For any
homomorphism $h\colon K_0(C^*(\Gamma))\to \Z$ there exist $\mathbf{k}$
and two $*$-homomorphisms $\pi^\pm\colon A \to Q_{\mathbf{k}}$ such
that $\pi^{+}_*(x)-\pi^{-}_*(x)=h_\infty(x)$ for all $x\in
K_0(A)$. Here we identify $K_0(Q_{\mathbf{k}})$ with a subgroup of
$\prod_{n=1}^\infty \Z / \sum_{n=1}^\infty \Z$, and $h_\infty(x)$ is
the coset of element $(h(x),h(x),h(x),\dots)\in \prod_{n=1}^\infty\Z$.
With this terminology, Theorem~\ref{thm_cor:almost-flat-surj} reads as
follows:
\begin{theorem*}%\label{thm_cor:almost-flat-surj}
  Let $\Gamma$ be a discrete countable group whose classifying space
  $B\Gamma$ is a finite simplicial complex. If the full assembly map
  $\mu\colon K_0(B\Gamma)\to K_0(C^*(\Gamma))$ is bijective, then the
  following conditions are equivalent:
  \begin{enumerate}
  \item All elements of $K^0(B\Gamma)$ are almost flat modulo torsion;
  \item $C^*(\Gamma)$ is  K-theoretically MF.
  \end{enumerate}
\end{theorem*}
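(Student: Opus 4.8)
The plan is to deduce the equivalence from the correspondence of Theorems~\ref{thm:main-correspondence} and~\ref{thm:approx-inverse}, glued to a short piece of homological algebra. First I would fix the algebraic setting. Since $B\Gamma$ is a finite simplicial complex, $K^0(B\Gamma)$ and $K_0(B\Gamma)$ are finitely generated abelian groups, and the index pairing $K^0(B\Gamma)\times K_0(B\Gamma)\to\Z$ induces a homomorphism $K^0(B\Gamma)\to\Hom(K_0(B\Gamma),\Z)$ that is surjective by the universal coefficient theorem and whose kernel is exactly the torsion subgroup of $K^0(B\Gamma)$ (the pairing becoming perfect after tensoring with $\Q$). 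Using that $\mu$ is an isomorphism, for $\xi\in K^0(B\Gamma)$ put $h_\xi\colon K_0(C^*(\Gamma))\to\Z$, $h_\xi(x)=\langle\xi,\mu^{-1}(x)\rangle$; then $\xi\mapsto h_\xi$ is a surjection of $K^0(B\Gamma)$ onto $\Hom(K_0(C^*(\Gamma)),\Z)$ with torsion kernel, so $h_\xi=h_\zeta$ whenever $\xi-\zeta$ is torsion. I would also record the routine fact that a homomorphism $h\colon K_0(C^*(\Gamma))\to\Z$ is implemented by a pair of discrete asymptotic homomorphisms in the sense of Definition~\ref{def:pushforward} if and only if $h_\infty=\pi^+_*-\pi^-_*$ for a pair of $*$-homomorphisms $\pi^\pm\colon C^*(\Gamma)\to Q_{\mathbf{k}}$: a convergent family of asymptotic homomorphisms assembles to a genuine $*$-homomorphism into the quotient $Q_{\mathbf{k}}$, while conversely a $*$-homomorphism out of the separable $C^*$-algebra $C^*(\Gamma)$ admits an asymptotically multiplicative lift.

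The analytic substance I would import from Theorems~\ref{thm:main-correspondence} and~\ref{thm:approx-inverse}, in two forms. First, if $\xi\in K^0(B\Gamma)$ is almost flat modulo torsion, then the quasi-representations of $\Gamma$ attached by Theorem~\ref{thm:main-correspondence} to a sequence of pairs of $\epsilon_n$-flat bundles representing $\xi$ modulo torsion, with $\epsilon_n\to 0$, pass to a pair of discrete asymptotic homomorphisms $\pi^\pm_n\colon C^*(\Gamma)\to M_{k(n)}(\C)$ whose pushforwards satisfy $(\pi^+_n)_\sharp(\mu(y))-(\pi^-_n)_\sharp(\mu(y))=\langle\xi,y\rangle$ for every $y\in K_0(B\Gamma)$ and all large $n$; equivalently, this pair implements $h_\xi$. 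Conversely, given a pair of discrete asymptotic homomorphisms implementing a homomorphism $h$, Theorem~\ref{thm:approx-inverse} returns quasi-representations of $\Gamma$ with defect tending to $0$, hence a sequence of $\epsilon_n$-flat bundles on $B\Gamma$ with $\epsilon_n\to 0$ whose $K$-theory classes, by the same index identity together with the finite generation of $K_0(B\Gamma)$, eventually stabilize modulo torsion to a class $\zeta\in K^0(B\Gamma)$ that is almost flat modulo torsion and has $h_\zeta=h$.

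Granting this, both implications are immediate. For (1)$\Rightarrow$(2): given an arbitrary $h\colon K_0(C^*(\Gamma))\to\Z$, surjectivity of $\xi\mapsto h_\xi$ supplies $\xi\in K^0(B\Gamma)$ with $h_\xi=h$; by hypothesis (1), $\xi$ is almost flat modulo torsion, so the first input above shows $h=h_\xi$ is implemented by a pair of discrete asymptotic homomorphisms---equivalently, by the remark, by a pair of $*$-homomorphisms into some $Q_{\mathbf{k}}$. As $h$ was arbitrary, $C^*(\Gamma)$ is K-theoretically MF. For (2)$\Rightarrow$(1): given $\xi\in K^0(B\Gamma)$, the homomorphism $h_\xi$ is implemented by (2), so the second input provides an almost flat (modulo torsion) class $\zeta\in K^0(B\Gamma)$ with $h_\zeta=h_\xi$; then $\xi-\zeta$ pairs trivially with all of $K_0(B\Gamma)$, hence is torsion, and $\xi$ is almost flat modulo torsion. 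As $\xi$ was arbitrary, (1) holds.

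The genuine obstacle is the index identity underlying both inputs: proving that the correspondence of Theorems~\ref{thm:main-correspondence} and~\ref{thm:approx-inverse} between almost flat bundles and quasi-representations is recorded faithfully, at the level of $K$-theory, by $\xi\mapsto h_\xi$, i.e.\ that the pushforwards $(\pi^\pm_n)_\sharp$ of the associated asymptotic homomorphisms compute the topological index pairing on $B\Gamma$ after composition with the assembly map $\mu$. This, together with the care needed to pass from quasi-representations of $\Gamma$ to honest discrete asymptotic homomorphisms of $C^*(\Gamma)$, is where the work lies; the universal-coefficient bookkeeping and the dictionary with $*$-homomorphisms into $Q_{\mathbf{k}}$ are comparatively routine.
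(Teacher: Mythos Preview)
Your outline matches the paper's approach closely: the paper deduces the theorem from the more general Theorem~\ref{thm:almost-flat-surj} (with $B=\mathbb{C}$), whose proof runs exactly along your lines---UCT bookkeeping to trade a homomorphism $h$ for a class $x$ pairing as $h\circ\mu$, then the $\alpha/\beta$ correspondence to move between almost flat bundles and quasi-representations, with the finiteness of $\Ext(K_*(B\Gamma),\Z)$ forcing eventual stabilization in the $(2)\Rightarrow(1)$ direction.

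The ``genuine obstacle'' you flag is indeed the crux, and the paper does not derive it from Theorems~\ref{thm:main-correspondence} and~\ref{thm:approx-inverse} alone. The missing ingredient is the Mishchenko line bundle: the paper writes its class $\ell$ as an explicit idempotent $e\in M_N(C(B\Gamma)\otimes\mathbb{C}[\Gamma])$, shows in Proposition~\ref{prop:push-Mish-flat} that the push-forward $(\id\otimes\pi)_\sharp(e)$ agrees with the class $[E_\pi]$ of the bundle built from $\beta(\pi)$, and then invokes \cite[Thm.~3.2, Cor.~3.5]{Dadarlat12} to identify $\langle\ell_\pi,z\rangle$ with $\pi_\sharp(\mu_{\ell^1}(z))$. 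This is what links the topological index pairing to the $K$-theoretic push-forward of the quasi-representation; Theorem~\ref{thm:approx-inverse} enters only to certify $[E_{\pi_n^\pm}]=[E_n^\pm]$ in the $(1)\Rightarrow(2)$ direction, not to produce quasi-representations in the converse (there you already have them, and only $\beta$ is needed).
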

By a result of Higson and Kasparov \cite{Higson-Kasparov01}, the
assumptions of this theorem are satisfied by groups $\Gamma$ with the
Haagerup property and finite classifying spaces.

We note that a separable quasidiagonal C*-algebra that satisfies the
UCT is K-theoretically MF by \cite[Prop. 2.5]{Dadarlat14}. We suspect
that a similar result holds for MF algebras satisfying the UCT.

\section{Basic definitions and notation}
\label{sec:basic-def-not}
Let $X$ be a connected compact metric space that admits a finite
triangulation. This means that $X$ is the geometric realization of
some connected finite simplicial complex $\Lambda$, written
$X=|\Lambda|$.  Let $\Gamma = \pi_1(X)$ be the fundamental group of
$X$. Let $A$ be a unital $C^*$-algebra.  We establish a correspondence
between quasi-representations of $\Gamma$ into $\GL(A)$ and almost
flat bundles over $X$ with fiber $A$ and structure group $\GL(A)$.  To
be more accurate, this correspondence is between almost unitary
quasi-representations and almost unitary principal bundles.  It is
convenient to work in a purely combinatorial context. Following
\cite{Phillips-Stone86}, we call the combinatorial version of an
almost flat bundle an \emph{almost flat coordinate bundle}.  See
Definition~\ref{def:af-bundle}.  We prove the equivalence of these
concepts in Proposition~\ref{prop:correspondence_af}.

For our combinatorial approach we begin with a finite simplicial
complex $\Lambda$ endowed with some additional structure, as follows.
The vertices of $\Lambda$ are denoted by $i,j,k$ with possible
indices. The set of $k$-simplices of $\Lambda$ is denoted by
$\Lambda^{(k)}.$ We fix a maximal tree $T \subset \Lambda$ and a root
vertex $i_0$.

\subsection*{The edge-path group and quasi-representations}
\label{sec:edge-path-group}

It will be convenient to view the fundamental group of the geometric
realization of $\Lambda$ as the \emph{edge-path group} $E(\Lambda,
i_0)$ of $\Lambda$.  The groups $\pi_1(|\Lambda|)$ and $E(\Lambda,
i_0)$ are isomorphic and we will simply write $\Gamma$ for either
group.

Once $\Lambda$ and $T$ are fixed so is the following standard
presentation of $\Gamma$ in terms of the edges of $\Lambda$ (see
e.g. \cite[Sections 3.6--3.7]{Spanier66}): $\Gamma$ is isomorphic to
the group generated by the collection of all edges $\simplex{ i, j }$
of $\Lambda$ subject to the following relations:

\begin{itemize}
\item if $\simplex{ i, j }$ is an edge of $T$, then $\simplex{ i,
    j } = 1$;
\item if $i,j,k$ are vertices of a simplex of $\Lambda$, then
  $\simplex{ i, j }\simplex{ j, k } = \simplex{ i, k }$.
\end{itemize}
We should point out that in the second relation $i$, $j$, and $k$ are
not necessarily distinct; consequently $\simplex{ i, j}\simplex{j, i}
= \simplex{i , i}$.  Since $\simplex{i, i} = \simplex{i}$ belongs to
$T$ one has $\simplex{i, j}^{-1} = \simplex{j, i}$ as expected.
%%%%%%%
\begin{notation}
  \label{not:edge-path-grp}
  Let $\F_\Lambda$ be the free group generated by the edge set of
  $\Lambda$ and let $q\colon \F_\Lambda\to \Gamma$ be the group
  epimorphism corresponding the presentation of $\Gamma$ just
  described.

  Write $\mathcal{F}_\Lambda$ for the image under $q$ of the edge set
  of $\Lambda$; this is a symmetric generating set for $\Gamma$.
  Write $\gamma_{ij} = q( \simplex{i,j} )$ for the elements of
  $\mathcal{F}_\Lambda$.  Let $\mathcal{R}\subset \mathbb{F}_\Lambda$
  be the collection of all relators:
  \begin{displaymath}
    \mathcal{R} :=
    \{ \simplex{i, j} \mid \simplex{i, j} \text{ is an
      edge of } T \}
    \cup
    \{ \simplex{ i, j } \simplex{ j, k }
    \simplex{ i, k }^{-1} \mid i, j, k\text{ are vertices
      of a simplex of }\Lambda \}.
  \end{displaymath}

  Choose a set-theoretic section $s\colon \Gamma\to \F_\Lambda$ of $q$
  that takes the neutral element of $\Gamma$ to the neutral element of
  $\mathbb{F}_\Lambda$.  This section $s$ will remain fixed for the
  rest of the paper.
\end{notation}

We introduce one last notation before the definition. If $A$ is a
unital C*-algebra and $\delta>0$, set
\[
\U(A)_\delta = \{ v\in A : \dist(v, \U(A)) < \delta \}.
\]
Note that $\U(A)_\delta \subseteq \GL(A)$ if $\delta<1$.

\begin{definition}\mbox{}
  \label{def:q-rep}
  Let $A$ be a unital C*-algebra.
  \begin{enumerate}
  \item Let $\mathcal{F} \subset \Gamma$ be finite and let
    $0<\delta<1$.  A function $\pi\colon \Gamma\to \GL(A)$ is an
    \emph{$(\mathcal{F}, \delta)$-representation} of $\Gamma$ if
    \begin{enumerate}
    \item $\pi(\gamma) \in \U(A)_\delta$ for all
      $\gamma\in\mathcal{F}$;
    \item $\| \pi(\gamma\gamma') - \pi(\gamma)\pi(\gamma') \| <
      \delta$ for all $\gamma, \gamma'\in\mathcal{F}$;
    \item $\pi(e) = 1_A$, where $e$ is the neutral element of $\Gamma.$
    \end{enumerate}

  \item Define a pseudometric $d$ on the set of all bounded maps
    $\Gamma\to A$ by
    \[
    d( \pi, \pi' ) = \max_{\gamma\in \mathcal{F}_\Lambda}
    \| \pi(\gamma) - \pi'(\gamma) \|.
    \]
  \end{enumerate}
\end{definition}

We may sometimes refer to an $(\mathcal{F}, \delta)$-representation as
a ``quasi-representation'' without specifying $\mathcal{F}$ or
$\delta$. In most of the paper we will take
$\mathcal{F}=\mathcal{F}_{\Lambda}$.

\subsection*{The dual cover and almost flat coordinate bundles}
%\label{sec:dual-cover}

Once $\Lambda$ is fixed, so is a cover $\mathcal{C}_\Lambda$ of
$|\Lambda|$, called the dual cover.  We recall its definition
(borrowing heavily from the appendix of \cite{Phillips-Stone90}).

\begin{definition}
  Let $\sigma = \simplex{0, \dots, r}$ be a simplex of $\Lambda$.  For
  $i\in \{0, \dots, r\}$, the \emph{dual cell block} $c_i^\sigma$,
  dual to $i$ in $\sigma$, is defined in terms of the barycentric
  coordinates $(t_o, \dots, t_r)$ by
  \[
  c_i^\sigma =
  \{ (t_0, \dots, t_r) \mid t_i \geq t_j\text{ for all } j\}
  \subset |\Lambda|.
  \]
  The \emph{dual cell} $c_i$, dual to the vertex $i$, is the union of
  cell blocks dual to $i$:
  \[
  c_i = \cup\{ c_i^\sigma \mid i\in \sigma \}.
  \]
  The \emph{dual cover} $\mathcal{C}_\Lambda$ is the collection of all
  dual cells.  (See Figure~\ref{fig:dual-cells}.)
\end{definition}

\begin{figure}
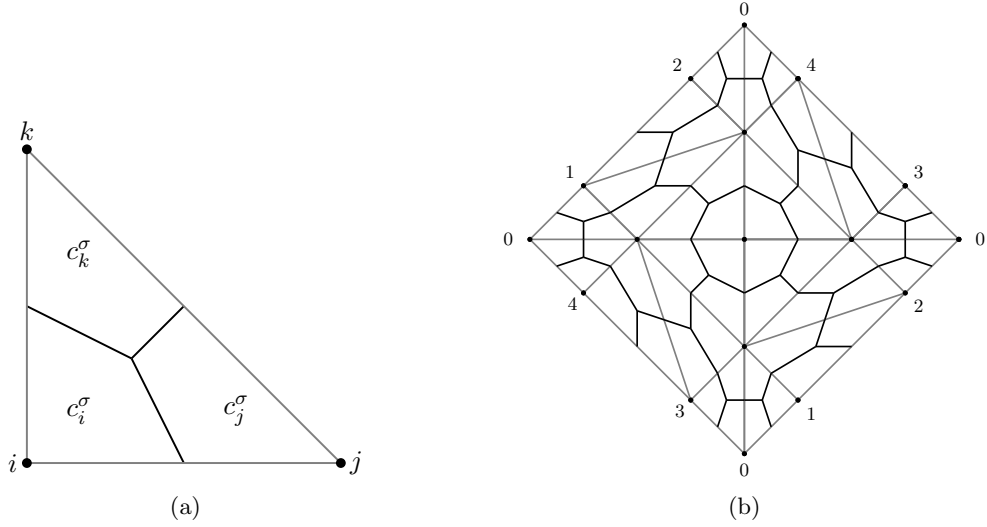

  \centering
  \subfloat[]{\includegraphics[width=.3\textwidth]%
    {dual-cells}}\hspace{.1\textwidth}%
  \subfloat[]{\includegraphics[width=.4\textwidth]%
    {torus-dual-cells}}
  \caption{(a) Dual cell blocks in a simplex $\sigma = \langle i, j, k
    \rangle$.
    (b) A triangulation of $\T^2$ with the dual cell structure
    highlighted.}
  \label{fig:dual-cells}
\end{figure}

\begin{notation}
  We usually write $c_{ij}$ for the intersection $c_i \cap c_j$,
  $c_{ijk}$ for $c_i\cap c_j\cap c_k$ etc.  The barycenter of a
  simplex $\sigma$ is denoted $\hat{\sigma}$.  Note that $\bary{ i, j
  } \in c_{ij}$.
\end{notation}

\begin{definition}
  \label{def:af-bundle}
  Recall that we have fixed a unital $C^*$-algebra $A$.
  \begin{enumerate}
  \item An \emph{$\varepsilon$-flat $\GL(A)$-coordinate bundle on
      $\Lambda$} is a collection of continuous functions $\mathbf{v} =
    \{ v_{ij}\colon c_{ij}\to \GL(A) \mid \simplex{i,j} \in
    \Lambda^{(1)} \}$ satisfying:
    \begin{enumerate}
    \item $v_{ij}(x) \in \U(A)_\varepsilon$ for all $x\in c_{ij}$ and all
      $\simplex{ i, j } \in \Lambda^{(1)}$;
    \item $v_{ij}(x) = v_{ji}(x)^{-1}$ for all $x\in c_{ij}$ and all
      $\simplex{ i, j } \in \Lambda^{(1)}$;
    \item $v_{ik}(x) = v_{ij}(x) v_{jk}(x)$ for all $x\in c_{ijk}$ and
      all $\simplex{ i, j, k } \in \Lambda^{(2)}$; and
    \item $\| v_{ij}(x) - v_{ij}(y) \| < \varepsilon$ for all $x, y \in
      c_{ij}$ and all $\simplex{ i, j }\in \Lambda^{(1)}$.
    \end{enumerate}

  \item Define a metric $d$ on the set of all
    $\GL(A)$-coordinate bundles on $\Lambda$ by
    \begin{displaymath}
      d( \mathbf{v}, \mathbf{v}' ) =
      \max_{ \simplex{i, j}\in \Lambda }
      \max_{ x\in c_{ij} }
      \| v_{ij}(x) - v'_{ij}(x) \|.
    \end{displaymath}
  \end{enumerate}
\end{definition}

We may sometimes refer to an $\varepsilon$-flat coordinate bundle as
an ``almost flat coordinate bundle'' without specifying $\varepsilon$.
We think of an almost flat coordinate bundle $\mathbf{v}$ as a
collection of transition functions defining a bundle over $|\Lambda|$,
with fiber $A$, that has ``small'' curvature.  We substantiate this
point of view in Section~\ref{sec:7} where we show that there are
positive numbers $\varepsilon_0,\nu, r$ that depend only on $\Lambda$
such that for any $\varepsilon$-flat $\GL(A)$-coordinate bundle
$v_{ij}$ on $\Lambda$, with $\ep<\ep_0$, there is a 1-{\v C}ech
cocycle $\tv_{ij}\colon V_i\cap V_j\to U(A)_{r\ep}$ that extends
$v_{ij}$ to prescribed open sets and $\tv_{ij}$ is $r\varepsilon$-flat
in the sense that $\|\tv_{ij}(x)-\tv_{ij}(x')\|<r\varepsilon$ for all
$x \in V_i\cap V_j$.  Here $V_i=\{x\in |\Lambda| \colon
\dist(x,c_i)<\nu\}$.  Since the sets $V_i$ are open, the usual gluing
construction based on $\tv_{ij}$ defines a locally trivial (almost
flat) bundle.  These objects are closely related to almost flat
$K$-theory classes; see Section~\ref{sec:an-appl} for details.

\section{The correspondence between almost flat bundles and
  quasi-representations}
\label{sec:results}
We state the main results on this topic. The proofs are given in
subsequent sections.

\begin{theorem}
  \label{thm:main-correspondence}
  Let $\Lambda$ be a finite connected simplicial complex with
  fundamental group $\Gamma$.  There exist positive numbers $C_0$,
  $\delta_0$, and $\varepsilon_0$ such that the following holds.

  If $A$ is a unital $C^*$-algebra, then there are functions
  \[
  \left\{
    \begin{array}{c}
      \varepsilon_0\text{-flat }\GL(A)\\
      \text{coordinate bundles on } \Lambda
    \end{array}
  \right\}
  \begin{array}{c}
    \alpha\\[-5pt]
    \xrightarrow{\hspace{20pt}}\\
    \xleftarrow{\hspace{20pt}}\\[-5pt]
    \beta
  \end{array}
  \left\{
    \begin{array}{c}
      (\mathcal{F}_\Lambda, \delta_0)\text{-representations}\\
      \text{of } \Gamma \text{ to } \GL(A)
    \end{array}
  \right\}
  \]
  such that:
  \begin{enumerate}
  \item if $0 < \varepsilon < \varepsilon_0$ and $\mathbf{v}$ is an
    $\varepsilon$-flat $\GL(A)$-coordinate bundle on $\Lambda$, then
    $\alpha(\mathbf{v})$ is an $(\mathcal{F}_\Lambda, C_0
    \varepsilon)$-representation of $\Gamma$ to $\GL(A)$; and
  \item if $0 < \delta < \delta_0$ and $\pi\colon \Gamma\to \GL(A)$ is
    an $(\mathcal{F}_\Lambda, \delta)$-representation, then
    $\beta(\pi)$ is a $C_0 \delta$-flat $\GL(A)$-coordinate bundle on
    $\Lambda$.
  \end{enumerate}
  Moreover:
  \begin{enumerate}
    \setcounter{enumi}{2}
  \item if $0 < \varepsilon < \varepsilon_0$ and $\mathbf{v}$ and
    $\mathbf{v'}$ are $\varepsilon$-flat $\GL(A)$-coordinate bundles,
    then $d( \alpha(\mathbf{v}), \alpha(\mathbf{v'}) ) < d(\mathbf{v},
    \mathbf{v'})+C_0\varepsilon $.
  \item if $0 < \delta < \delta_0$ and $\pi, \pi'\colon \Gamma\to
    \GL(A)$ are $(\mathcal{F}_\Lambda, \delta)$-representations, then
    $d(\beta(\pi), \beta(\pi')) < d(\pi, \pi')+C_0\delta$.
  \end{enumerate}
\end{theorem}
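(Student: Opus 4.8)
The plan is to construct $\alpha$ and $\beta$ explicitly from the combinatorial data and then verify the four numbered properties by direct estimates, with all constants traced back to the simplicial complex $\Lambda$, the maximal tree $T$, and the root $i_0$.

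\textbf{Construction of $\alpha$.} Given an $\varepsilon$-flat coordinate bundle $\mathbf{v}=\{v_{ij}\}$, I would first produce, for each vertex $i$, a ``parallel transport'' element $g_i \in \GL(A)$ along the unique $T$-path from $i_0$ to $i$, by composing the values $v_{k\ell}(\bary{k,\ell})$ of the transition functions at the barycenters of the edges of that path (with $g_{i_0}=1_A$). Then for each edge $\simplex{i,j}$ set $\tilde v_{ij} := g_i^{-1}\, v_{ij}(\bary{i,j})\, g_j$; this is the natural candidate for the holonomy of the generator $\gamma_{ij}$. One checks that $\tilde v_{k\ell}=1_A$ on tree edges, that $\tilde v_{ij}\tilde v_{ji}$ is within $O(\varepsilon)$ of $1_A$, and that $\tilde v_{ik}$ is within $O(\varepsilon)$ of $\tilde v_{ij}\tilde v_{jk}$ whenever $\simplex{i,j,k}$ is a $2$-simplex (here property (c) of the bundle at the barycenter $\bary{i,j,k}$ together with the flatness property (d) moving from barycenters of edges to barycenter of the face is what is used). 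Because $\F_\Lambda$ is free on the edges, the assignment $\simplex{i,j}\mapsto \tilde v_{ij}$ extends to a homomorphism $\F_\Lambda\to\GL(A)$, and precomposing with the fixed section $s\colon\Gamma\to\F_\Lambda$ gives $\pi:=\alpha(\mathbf{v})\colon\Gamma\to\GL(A)$. The subtlety is that $\pi$ need not be a genuine homomorphism — but the relators in $\mathcal{R}$ are sent to elements $O(\varepsilon)$-close to $1_A$, and since $s$ takes each $\gamma$ to a word of bounded length in the generators (the bound depending only on $\Lambda$, after reducing modulo $T$), a telescoping estimate shows $\|\pi(\gamma\gamma')-\pi(\gamma)\pi(\gamma')\|<C_0\varepsilon$ for $\gamma,\gamma'\in\mathcal{F}_\Lambda$, and $\pi(\gamma)=\tilde v_{ij}\in \U(A)_{C_0\varepsilon}$ since it is a product of a bounded number of elements of $\U(A)_\varepsilon$. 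This proves (1).

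\textbf{Construction of $\beta$.} Given an $(\mathcal{F}_\Lambda,\delta)$-representation $\pi$, I would build a coordinate bundle by the standard partition-of-unity / barycentric-interpolation trick of Phillips--Stone. On a simplex $\sigma=\simplex{0,\dots,r}$, the dual cell block decomposition lets one write each point of $c_{ij}$ in barycentric coordinates, and one defines $v_{ij}(x)$ by interpolating — inside $\GL(A)$, using that $\U(A)_\delta$ is a small neighborhood of the unitaries and the logarithm/exponential or a straight-line homotopy is available when $\delta$ is small — between $\pi(\gamma_{ij})$ and nearby products dictated by the other vertices of the simplices containing $x$. One must arrange the cocycle identity (c) exactly on triple overlaps $c_{ijk}$; this is where one uses that $\pi$ is \emph{approximately} multiplicative and pushes the interpolation so that the defining formula is literally multiplicative on the overlaps while remaining within $\U(A)_{C_0\delta}$. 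Properties (a), (b), (d) then follow by construction, with the flatness constant $C_0\delta$ coming from the (bounded, $\Lambda$-dependent) number of interpolation steps and the size of the derivative of the interpolation.

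\textbf{Continuity, i.e. (3) and (4), and the main obstacle.} Once $\alpha$ and $\beta$ are defined by explicit finite formulas in the input data, the Lipschitz-type estimates (3) and (4) follow by differentiating those formulas: $\alpha(\mathbf{v})(\gamma_{ij})$ is a product of boundedly many factors $v_{k\ell}(\bary{k,\ell})$ and their inverses, so replacing $\mathbf{v}$ by $\mathbf{v}'$ changes each factor by at most $d(\mathbf{v},\mathbf{v}')$ and changes the product by at most (number of factors)$\cdot d(\mathbf{v},\mathbf{v}') + O(\varepsilon)$ once one accounts for the norms of the $v_{k\ell}$ being within $O(\varepsilon)$ of $1$; absorbing the bounded factor count into $C_0$ and the $O(\varepsilon)$ error into $C_0\varepsilon$ gives the stated inequality. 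The argument for (4) is symmetric, using that the interpolation formula defining $\beta(\pi)_{ij}(x)$ is Lipschitz in $\pi$ uniformly in $x$. The main obstacle — and the part that needs genuine care rather than routine estimation — is getting the cocycle relation in $\beta$ to hold \emph{exactly} on triple overlaps while keeping everything inside $\GL(A)$ and controlling norms: the naive interpolation only gives an approximate cocycle, so one has to either correct it on the $c_{ijk}$ using the approximate multiplicativity of $\pi$, or (following Phillips--Stone) set up the interpolation on the dual cell blocks in an order-compatible way so that exactness is automatic. Verifying that this correction stays within $\U(A)_{C_0\delta}$ and does not spoil the flatness estimate (d) is the technical heart of the construction; the choice of $\delta_0$ (and $\varepsilon_0$) is dictated precisely by the requirement that these corrections remain in the region where the logarithm is defined and well-behaved.
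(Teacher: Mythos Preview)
Your approach is essentially the paper's. The construction of $\alpha$ via tree-based parallel transport composed with the section $s$ is exactly the paper's Definition~4.2 (your convention $g_i^{-1}(\cdot)g_j$ is the inverse of the paper's $\baryv{I}(\cdot)\baryv{J}^{-1}$, which is immaterial). For $\beta$, the paper also follows Phillips--Stone: after first perturbing $\pi$ to a unitary-valued $\bpi$ (Proposition~5.5), it fixes a partial order $\mathbf{o}$ on the vertices and sets $v_{ij}^\sigma(\mathbf{s})=\sum_I \lambda_I(\mathbf{s})\,u_I$, a \emph{convex combination} over all $\mathbf{o}$-ascending paths $I$ from $i$ to $j$ inside $\sigma$, with coefficients $\lambda_I$ read off from modified barycentric coordinates. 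No logarithm or exponential is used; the exact cocycle identity on $c_{ilj}$ falls out combinatorially (Proposition~5.8) because $s_l=1$ there kills every path not passing through $l$, and the surviving sum factors as a product. Your identification of this order-compatible interpolation as the technical heart is correct.

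There is one genuine slip in your treatment of (3). You obtain (number of factors)$\cdot d(\mathbf{v},\mathbf{v}')+O(\varepsilon)$ and propose ``absorbing the bounded factor count into $C_0$.'' But the inequality in (3) demands coefficient \emph{exactly} $1$ in front of $d(\mathbf{v},\mathbf{v}')$, so that absorption is illegitimate. The paper's mechanism is different: it first trades $\pi(\gamma_{ij})=\tilde\pi(s(\gamma_{ij}))$ for $\tilde\pi(\simplex{i,j})=\baryv{I}\baryv{ij}\baryv{J}^{-1}$ at cost $K\varepsilon$ (Lemma~4.6, which bounds $\|\tilde\pi(s(\gamma_{ij})\simplex{i,j}^{-1})-1\|$), and then argues that the remaining product comparison carries a multiplicative factor of the form $(1+\varepsilon)^{2L}$; since $(1+\varepsilon)^{2L}-1=O(\varepsilon)$ and $d(\mathbf{v},\mathbf{v}')$ is a priori bounded by $2(1+\varepsilon)$, the excess $((1+\varepsilon)^{2L}-1)\,d(\mathbf{v},\mathbf{v}')$ is $O(\varepsilon)$ and can be pushed into $C_0\varepsilon$. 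You should reproduce this $1+O(\varepsilon)$ mechanism rather than the cruder bounded-constant one. (For (4) no such issue arises: each $v_{ij}(x)$ lies within $C_0''\delta$ of the single anchor $u_{ij}=\bpi(\gamma_{ij})$, so the triangle inequality through that anchor and through $\pi(\gamma_{ij})$ gives coefficient $1$ on $d(\pi,\pi')$ immediately.)
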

\begin{definition}
  \label{def:normalized-coord-bundle}
  A $\GL(A)$-coordinate bundle $\mathbf{v} = \{ v_{ij} \}$ is
  \emph{normalized} if $v_{ij}( \bary{i,j} ) = 1_A$ for every edge
  $\simplex{i, j}$ of $T$.
\end{definition}

By Proposition~\ref{prop:T-normalized}, if a vector bundle over
$|\Lambda|$ may be represented by an $\varepsilon$-flat
$\GL(A)$-coordinate bundle, then it can be represented by a normalized
$C\varepsilon$-flat $\GL(A)$-coordinate bundle (where $C > 0$ depends
only on $\Lambda$).

\begin{theorem}
  \label{thm:approx-inverse}
  Let $\Lambda$ be a finite connected simplicial complex with
  fundamental group $\Gamma$.  There exist positive numbers $C_1,
  \varepsilon_1$ and $\delta_1$ such that the following holds for any
  unital $C^*$-algebra $A$.
  \begin{enumerate}
  \item If $0 < \varepsilon < \varepsilon_1$ and $\mathbf{v}$ is a
    normalized $\varepsilon$-flat $\GL(A)$-coordinate bundle on
    $\Lambda$, then
    \[
    d\big( (\beta\circ \alpha)(\mathbf{v}), \mathbf{v} \big)
    \leq C_1\varepsilon.
    \]

  \item If $0 < \delta < \delta_1$ and $\pi\colon \Gamma\to \GL(A)$ is
    an $(\mathcal{F}_\Lambda, \delta)$-representation, then
    \[
    d\big( (\alpha\circ \beta)(\pi), \pi \big) \leq C_1\delta.
    \]
  \end{enumerate}
\end{theorem}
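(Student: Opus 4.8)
The plan is to follow a point through both constructions $\alpha$ and $\beta$ and to bound, stage by stage, the deviation from the identity by a fixed multiple of the relevant small parameter, keeping every constant dependent only on the combinatorics of $\Lambda$ (the number of vertices, the maximal length of a $T$-path, the number of simplices, the maximal dimension). The two elementary inputs I use repeatedly are \emph{accumulation estimates}: a product $g_1\cdots g_m$ with each $g_\ell\in\U(A)_\eta$ is within $Cm\eta$ of a product of nearby unitaries, and an ordered product of $(\mathcal{F}_\Lambda,\delta)$-representation values along a word of length $\le m$ deviates from the value on the represented element by at most $Cm\delta$; since $m$ is bounded in terms of $\Lambda$, this is where the dependence of $C_1$ on $\Lambda$ enters. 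I also use two features of the explicit constructions: for a generator $\gamma_{ij}\in\mathcal{F}_\Lambda$, the element $\alpha(\mathbf{v})(\gamma_{ij})$ is the ordered product of the numbers $v_{kl}(\bary{k,l})$ along a bounded-length edge loop based at $i_0$ representing $\gamma_{ij}$ — the $T$-path from $i_0$ to $i$, the edge $\simplex{i,j}$, then the $T$-path from $j$ to $i_0$; and if $\pi$ is an $(\mathcal{F}_\Lambda,\delta)$-representation then $\|\beta(\pi)_{ij}(\bary{i,j})-\pi(\gamma_{ij})\|\le C_0\delta$, because the interpolation weights defining $\beta$ degenerate at the edge barycenter so that only the $\pi(\gamma_{ij})$-contribution survives, up to the multiplicativity defect of $\pi$.

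For part (1), let $\mathbf{v}=\{v_{ij}\}$ be a normalized $\varepsilon$-flat coordinate bundle and put $\pi=\alpha(\mathbf{v})$; by Theorem~\ref{thm:main-correspondence}(1) this is an $(\mathcal{F}_\Lambda,C_0\varepsilon)$-representation for $\varepsilon<\varepsilon_0$, and shrinking $\varepsilon_1$ so that $C_0\varepsilon_1<\delta_0$ we may form $\beta(\pi)$. Because $\mathbf{v}$ is normalized, every factor $v_{kl}(\bary{k,l})$ coming from a $T$-edge in the loop above equals $1_A$, so $\alpha(\mathbf{v})(\gamma_{ij})=v_{ij}(\bary{i,j})$ exactly. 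For $x\in c_{ij}$ write
\[
\|\beta(\pi)_{ij}(x)-v_{ij}(x)\|\le
\|\beta(\pi)_{ij}(x)-\beta(\pi)_{ij}(\bary{i,j})\|
+\|\beta(\pi)_{ij}(\bary{i,j})-\pi(\gamma_{ij})\|
+\|v_{ij}(\bary{i,j})-v_{ij}(x)\|.
\]
The first term is at most the flatness modulus of $\beta(\pi)$, which is $C_0(C_0\varepsilon)$ by Theorem~\ref{thm:main-correspondence}(2); the middle term is $\le C_0\varepsilon$ by the reconstruction feature of $\beta$ together with $\pi(\gamma_{ij})=v_{ij}(\bary{i,j})$; the last term is $<\varepsilon$ by $\varepsilon$-flatness of $\mathbf{v}$. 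Taking the maximum over edges and over $x\in c_{ij}$ gives $d\big((\beta\circ\alpha)(\mathbf{v}),\mathbf{v}\big)\le C_1\varepsilon$ with $C_1=C_0^2+C_0+1$ (or larger).

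For part (2), let $\pi$ be an $(\mathcal{F}_\Lambda,\delta)$-representation with $\delta<\delta_1\le\delta_0$ chosen small enough that $\beta(\pi)$ is $C_0\delta$-flat with $C_0\delta<\varepsilon_0$, so that $\alpha(\beta(\pi))$ is defined. Since the metric $d$ on representations tests only elements of $\mathcal{F}_\Lambda$, it suffices to bound $\|\alpha(\beta(\pi))(\gamma_{ij})-\pi(\gamma_{ij})\|$ for each edge $\simplex{i,j}$. By the feature of $\alpha$, $\alpha(\beta(\pi))(\gamma_{ij})$ is the ordered product of the numbers $\beta(\pi)_{kl}(\bary{k,l})$ along the $T$-loop through $\simplex{i,j}$. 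For a $T$-edge $\simplex{k,l}$ one has $\gamma_{kl}=e$ in $\Gamma$, hence $\pi(\gamma_{kl})=1_A$, and by the reconstruction feature $\|\beta(\pi)_{kl}(\bary{k,l})-1_A\|\le C_0\delta$; by the accumulation estimate each of the two $T$-path subproducts is within a $\Lambda$-dependent multiple of $\delta$ of $1_A$. The central factor $\beta(\pi)_{ij}(\bary{i,j})$ is within $C_0\delta$ of $\pi(\gamma_{ij})$. Multiplying out and invoking the accumulation estimate once more yields $\|\alpha(\beta(\pi))(\gamma_{ij})-\pi(\gamma_{ij})\|\le C_1\delta$, hence $d\big((\alpha\circ\beta)(\pi),\pi\big)\le C_1\delta$.

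The main obstacle is the \emph{reconstruction at the barycenter} fact $\|\beta(\pi)_{ij}(\bary{i,j})-\pi(\gamma_{ij})\|\le C_0\delta$, which is used in both halves; establishing it means unwinding the explicit Phillips--Stone interpolation that defines $\beta$ and checking that at $\bary{i,j}$ the only simplices and barycentric coordinates that contribute force the value to be $\pi(\gamma_{ij})$ up to terms absorbed via near-multiplicativity of $\pi$. The remaining work — verifying that $\alpha$ on a generator is indeed the claimed bounded-length holonomy product, so that normalization kills the $T$-factors, and assembling the constants ($C_0$, the flatness moduli, the $T$-length, the maximal simplex dimension) into a single $C_1$ depending only on $\Lambda$ — is routine bookkeeping, but must be done carefully so that no hidden dependence on $A$ or on the particular quasi-representation enters.
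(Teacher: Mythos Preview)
Your overall architecture is exactly the paper's: reduce both distances to a comparison at the edge barycenters $\bary{i,j}$, absorb the displacement from an arbitrary $x\in c_{ij}$ using the flatness moduli of $\mathbf v$ and of $\beta(\pi)$, and use the reconstruction fact $\beta(\pi)_{ij}(\bary{i,j})=\bpi(\gamma_{ij})$ together with $\|\bpi(\gamma_{ij})-\pi(\gamma_{ij})\|<20\delta$. That part is fine.

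There is, however, a genuine gap in your description of $\alpha$, and it affects both halves. You assert that $\alpha(\mathbf v)(\gamma_{ij})$ is the holonomy product along the specific loop ``$T$-path from $i_0$ to $i$, edge $\simplex{i,j}$, $T$-path back from $j$'', and hence for normalized $\mathbf v$ that $\alpha(\mathbf v)(\gamma_{ij})=v_{ij}(\bary{i,j})$ \emph{exactly}. But by Definition~\ref{def:quasi-rep-constr}, $\alpha(\mathbf v)=\tilde\pi\circ s$, where $s\colon\Gamma\to\mathbb F_\Lambda$ is a \emph{fixed} set-theoretic section. The loop you describe computes $\tilde\pi(\simplex{i,j})=\baryv{I}\baryv{ij}\baryv{J}^{-1}$, whereas $\alpha(\mathbf v)(\gamma_{ij})=\tilde\pi(s(\gamma_{ij}))$. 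One cannot arrange $s(\gamma_{ij})=\simplex{i,j}$ for all edges, since distinct edges (for instance all tree edges) may map to the same $\gamma_{ij}$. So in general $\alpha(\mathbf v)(\gamma_{ij})=\tilde\pi(\simplex{i,j})\cdot\tilde\pi(g_{ij})$ with $g_{ij}:=s(\gamma_{ij})\simplex{i,j}^{-1}\in\ker q$ nontrivial.

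The paper handles this via Lemma~\ref{lem:bound-on-R}: each $g_{ij}$ is a bounded-length product of conjugates of relators, and $\tilde\pi$ sends every relator within $O(\varepsilon)$ of $1_A$, so $\|\tilde\pi(g_{ij})-1_A\|<K\varepsilon$ for a constant $K$ depending only on $(\Lambda,T,i_0,s)$. With this in hand your part~(1) estimate becomes $\|\pi(\gamma_{ij})-\baryv{ij}\|\le(1+\varepsilon)K\varepsilon$ rather than $0$, and the triangle inequality you wrote still closes with an adjusted $C_1$. The same correction is needed in part~(2): one has $\alpha(\beta(\pi))(\gamma_{ij})=\tilde\pi'(\simplex{i,j})\cdot\tilde\pi'(g_{ij})=\bpi(\gamma_{ij})\cdot\tilde\pi'(g_{ij})$ (here $\beta(\pi)$ is automatically normalized because $\bpi(e)=1_A$, so indeed $\tilde\pi'(\simplex{i,j})=\bpi(\gamma_{ij})$ exactly), and one again needs $\|\tilde\pi'(g_{ij})-1_A\|<KC_0\delta$ from Lemma~\ref{lem:bound-on-R}. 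Once you insert this missing $g_{ij}$-term in both parts, your argument coincides with the paper's.
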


\section{From almost flat bundles to quasi-representations}
\label{sec:bundle-to-qrep}
In this section we construct the map $\alpha$ announced in
Section~\ref{sec:results}.  It is a combinatorial version of a
construction due to Connes-Gromov-Moscovici
\cite{Connes-Gromov-etal90} involving parallel transport on a smooth
manifold.

 Let $\mathbf{v} = \{ v_{ij}\colon c_{ij}\to \GL(A) \}$ be
an $\varepsilon$-flat coordinate bundle on $\Lambda$.  We
will define a quasi-representation $\alpha(\mathbf{v}) \colon
\Gamma\to \GL(A)$ with properties described in
Proposition~\ref{prop:qrep-existence}.
\begin{notation}
  \label{not:baryv}
  Recall that the barycenter of a 1-simplex $\simplex{i, j} \in
  \Lambda^{(1)}$ is written $\bary{i, j}$.  For such a 1-simplex, let
  $\baryv{ij} = v_{ij}( \bary{i, j} ) \in \GL(A)$.
For a path $I = ( i_1, \dots, i_m )$ of vertices in $\Lambda$, let
  \begin{align*}
    \baryv{I} &= \baryv{i_1 i_2}  \dots \baryv{i_{m-1}i_m}.
  \end{align*}
\end{notation}

\begin{definition}
  \label{def:quasi-rep-constr}
  Define a group homomorphism
  $\tilde{\pi}=\tilde{\pi}_{\mathbf{v}}\colon \F_\Lambda\to \GL(A)$ as
  follows.  If $\simplex{ i, j } \in \Lambda^{(1)}$, let $I = (i_0,
  \dots, i)$ be the unique path along $T$ from $i_0$ to $i$ and $J =
  (i_0, \dots, j)$ be the unique path from $i_0$ to $j$.  Set
  \begin{equation}
    \label{eq:1}
    \tilde{\pi}\big( \simplex{i, j} \big) =
    \baryv{I} \baryv{ij} \baryv{J}^{-1}.
  \end{equation}
  Finally, set
  \[
  \alpha(\mathbf{v})
  = \tilde{\pi}\circ s\colon \Gamma\to \GL(A),
  \]
  where $s$ is the set theoretic section of $q\colon \F_\Lambda\to
  \Gamma$ that was fixed in Notation~\ref{not:edge-path-grp}.
\end{definition}

\begin{lemma}
  \label{lem:U-epsilon-prod} Let $\nu>0$ and $0<\varepsilon<1$.  If
  $x_1, \dots, x_m\in A$, $u_1, \dots, u_m\in \U(A)$ and
  $\|x_i-u_i\|<\nu\varepsilon$ for all $i$, then $\| x_1\dots x_m -
  u_1\dots u_m \|<(1+\nu)^m\varepsilon$.  In particular, if $x_1,
  \dots, x_m\in \U(A)_\varepsilon$, then $x_1 \dots x_m \in
  \U(A)_{2^m\varepsilon}$.
\end{lemma}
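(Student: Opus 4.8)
The statement is Lemma~\ref{lem:U-epsilon-prod}, a purely elementary estimate about products in a unital C*-algebra. I would prove the first (general) assertion by induction on $m$, then derive the ``in particular'' clause as the special case $\nu = 1$. For the base case $m = 1$ there is nothing to prove. For the inductive step, write $x_1 \cdots x_m = (x_1 \cdots x_{m-1}) x_m$ and $u_1 \cdots u_m = (u_1 \cdots u_{m-1}) u_m$, and use the standard telescoping identity
\[
x_1 \cdots x_m - u_1 \cdots u_m
= (x_1 \cdots x_{m-1} - u_1 \cdots u_{m-1}) x_m
+ (u_1 \cdots u_{m-1})(x_m - u_m).
\]
Here $\| x_m \| \leq \| u_m \| + \nu\varepsilon = 1 + \nu\varepsilon$, while $\| u_1 \cdots u_{m-1} \| = 1$ since each $u_i$ is a unitary; and by the inductive hypothesis $\| x_1 \cdots x_{m-1} - u_1 \cdots u_{m-1} \| < (1+\nu)^{m-1}\varepsilon$. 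Combining these,
\[
\| x_1 \cdots x_m - u_1 \cdots u_m \|
< (1+\nu)^{m-1}\varepsilon\,(1 + \nu\varepsilon) + \nu\varepsilon.
\]
Since $0 < \varepsilon < 1$ we have $1 + \nu\varepsilon < 1 + \nu$, so the first term is $< (1+\nu)^m\varepsilon$; one then checks that adding $\nu\varepsilon$ keeps the total below $(1+\nu)^m\varepsilon$, e.g. because $(1+\nu)^m \geq (1+\nu)^{m-1}(1+\nu) = (1+\nu)^{m-1} + \nu(1+\nu)^{m-1} \geq (1+\nu)^{m-1}(1+\nu\varepsilon)/1 + \nu$ once one uses $(1+\nu)^{m-1}\geq 1$. (I would arrange the bookkeeping so that the strict inequality is genuinely preserved; the slack is ample.)

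For the ``in particular'' clause, suppose $x_i \in \U(A)_\varepsilon$, so for each $i$ there is a unitary $u_i$ with $\| x_i - u_i \| < \varepsilon$, i.e. the hypothesis holds with $\nu = 1$. The first part then gives $\| x_1 \cdots x_m - u_1 \cdots u_m \| < 2^m \varepsilon$, and since $u_1 \cdots u_m \in \U(A)$ this says exactly $x_1 \cdots x_m \in \U(A)_{2^m\varepsilon}$, as claimed.

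The only mild subtlety — and the one place to be careful — is propagating the \emph{strict} inequality cleanly and confirming that the constant is exactly $(1+\nu)^m$ rather than something slightly larger; this is purely a matter of organizing the induction so that the factor $1 + \nu\varepsilon < 1+\nu$ (valid because $\varepsilon < 1$) absorbs the leftover $+\nu\varepsilon$ term. There is no real obstacle: the argument is a one-paragraph induction using the triangle inequality, submultiplicativity of the norm, and $\|u\| = 1$ for unitaries.
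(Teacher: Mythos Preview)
Your approach---telescoping plus the triangle inequality---is the same idea as the paper's, but the inductive bookkeeping you sketch does not close. After the inductive step you arrive at
\[
(1+\nu)^{m-1}\varepsilon\,(1+\nu\varepsilon) + \nu\varepsilon,
\]
and you assert this is $< (1+\nu)^m\varepsilon$. That inequality is equivalent to $(1+\nu)^{m-1}(1-\varepsilon) > 1$, which is \emph{not} guaranteed by the hypotheses: for $m=2$, $\nu=1$, $\varepsilon=0.9$ your bound is $2\cdot 0.9\cdot 1.9 + 0.9 = 4.32$, already exceeding the target $(1+\nu)^2\varepsilon = 3.6$. The ``ample slack'' you invoke is not there, because by replacing the sharper intermediate quantity $(1+\nu\varepsilon)^{m-1}-1$ with $(1+\nu)^{m-1}\varepsilon$ in the induction hypothesis you have already spent it.

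The fix is immediate and is what the paper does: telescope all at once,
\[
x_1\cdots x_m - u_1\cdots u_m \;=\; \sum_{i=1}^m u_1\cdots u_{i-1}(x_i - u_i)\,x_{i+1}\cdots x_m,
\]
so that
\[
\|x_1\cdots x_m - u_1\cdots u_m\| \;<\; \sum_{i=1}^m \nu\varepsilon\,(1+\nu\varepsilon)^{m-i} \;=\; (1+\nu\varepsilon)^m - 1,
\]
and then use the binomial expansion together with $\varepsilon^k\leq \varepsilon$ for $k\geq 1$ to get $(1+\nu\varepsilon)^m - 1 \leq \varepsilon\big((1+\nu)^m - 1\big) < (1+\nu)^m\varepsilon$. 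If you prefer induction, simply strengthen the hypothesis to $\|x_1\cdots x_k - u_1\cdots u_k\| < (1+\nu\varepsilon)^k - 1$; your telescoping step then yields exactly $((1+\nu\varepsilon)^{m-1}-1)(1+\nu\varepsilon) + \nu\varepsilon = (1+\nu\varepsilon)^m - 1$, and you pass to $(1+\nu)^m\varepsilon$ only at the end. Your derivation of the ``in particular'' clause from the main estimate is correct.
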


\begin{proof}
  For $i \in \{1, \dots, m\}$, let $u_i\in \U(A)$ be such that $\| x_i
  - u_i \| < \varepsilon$.  One checks that
  \begin{equation*}
    \| x_1\dots x_m - u_1\dots u_m \|
    <
    \sum_{i=1}^m \| x_i - u_i \| (1 + \nu\varepsilon)^{m-i}
    < (1 + \nu)^m\varepsilon.\qedhere
  \end{equation*}
\end{proof}

\begin{notation}
  \label{not:ell-and-L}
  For $g \in \F_\Lambda$, let $\ell(g)\in \Z_{\geq 0}$ be the word
  length of $g$ with respect to the generating set $\Lambda^{(1)}$.
  We denote by $L$ the length (number of edges) of a longest path in
  $\Lambda$ that starts at the root $i_0$ and does not repeat any
  edge.
\end{notation}

\begin{lemma}
  \label{lem:pi-tilde-U-epsilon}
  If $g\in \F_\Lambda$, then $\dist( \tilde{\pi}( g ), \U(A) ) <
  2^{ 3L + \ell(g)} \varepsilon$.
\end{lemma}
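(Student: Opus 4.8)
The plan is to exploit that $\tilde\pi\colon\F_\Lambda\to\GL(A)$ is a group homomorphism and to estimate it one generator at a time. I would write $g$ as a word of length $\ell(g)$ in the edges of $\Lambda$ and their inverses, bound $\dist(\tilde\pi(e),\U(A))$ for a single letter $e$, and then multiply these bounds using Lemma~\ref{lem:U-epsilon-prod}. It is important to do the bookkeeping at the level of generators: if one instead fully expands $\tilde\pi(g)$ as a product of barycenter values $\baryv{kl}$, that product has on the order of $L\cdot\ell(g)$ factors in general, which would only yield the far weaker bound $\dist(\tilde\pi(g),\U(A))<2^{(2L+1)\ell(g)}\varepsilon$.

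For a single edge $\simplex{i,j}\in\Lambda^{(1)}$, formula \eqref{eq:1} reads $\tilde\pi(\simplex{i,j})=\baryv{I}\,\baryv{ij}\,\baryv{J}^{-1}$, where $I$ and $J$ are the $T$-geodesics from $i_0$ to $i$ and to $j$. Each of $I$ and $J$ is a simple path in $\Lambda$ starting at $i_0$, hence has at most $L$ edges (Notation~\ref{not:ell-and-L}). By Notation~\ref{not:baryv}, $\baryv{I}$ is a product of $|I|\le L$ factors $\baryv{kl}=v_{kl}(\bary{k,l})$, each lying in $\U(A)_\varepsilon$ by part~(a) of Definition~\ref{def:af-bundle}; likewise for $\baryv{J}$. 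Part~(b) of that definition gives $\baryv{kl}^{-1}=v_{kl}(\bary{k,l})^{-1}=v_{lk}(\bary{l,k})=\baryv{lk}\in\U(A)_\varepsilon$, so $\baryv{J}^{-1}$, and similarly $\baryv{I}^{-1}$, is again a product of at most $L$ elements of $\U(A)_\varepsilon$. Consequently both $\tilde\pi(\simplex{i,j})$ and $\tilde\pi(\simplex{i,j}^{-1})=\baryv{J}\,\baryv{ji}\,\baryv{I}^{-1}$ are products of at most $2L+1$ elements of $\U(A)_\varepsilon$. By the last assertion of Lemma~\ref{lem:U-epsilon-prod} (and assuming, as one may, that $\varepsilon<1$), every edge or inverse edge $e$ satisfies $\tilde\pi(e)\in\U(A)_{2^{2L+1}\varepsilon}$.

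Now write $g=e_1\cdots e_n$ with $n=\ell(g)$ and each $e_t$ an edge or inverse edge; since $\tilde\pi$ is a homomorphism, $\tilde\pi(g)=\tilde\pi(e_1)\cdots\tilde\pi(e_n)$ is a product of $n$ elements, each in $\U(A)_{2^{2L+1}\varepsilon}$. Assuming $\varepsilon$ is small enough that $2^{2L+1}\varepsilon<1$, a second application of the last assertion of Lemma~\ref{lem:U-epsilon-prod} — with its parameter now equal to $2^{2L+1}\varepsilon$ and with $m=n$ — gives $\tilde\pi(g)\in\U(A)_{2^{n}\cdot 2^{2L+1}\varepsilon}=\U(A)_{2^{2L+1+n}\varepsilon}$. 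Since $2L+1\le 3L$ when $L\ge1$ (and $\Lambda$ is a point, hence the statement trivial, when $L=0$), this yields $\dist(\tilde\pi(g),\U(A))<2^{2L+1+\ell(g)}\varepsilon\le 2^{3L+\ell(g)}\varepsilon$, as claimed; for $\ell(g)=0$ one has $\tilde\pi(g)=1$ and the bound is immediate.

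The step I expect to be the real obstacle — or rather the subtlety that cannot be bypassed — is precisely the one flagged above: the target exponent $3L+\ell(g)$ is linear in $\ell(g)$ with leading coefficient $1$, whereas any direct count of $\tilde\pi(g)$ as a word in the $\baryv{kl}$'s produces a leading coefficient of order $L$. The gain comes from invoking the geometric-series estimate of Lemma~\ref{lem:U-epsilon-prod} at the level of the generators, so that the dependence on $\ell(g)$ is confined to the factor $2^{\ell(g)}$, and this is exactly what makes the smallness hypothesis $2^{2L+1}\varepsilon<1$ on $\varepsilon$ necessary.
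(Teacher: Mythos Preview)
Your proof is correct and follows essentially the same two-stage strategy as the paper: first bound $\dist(\tilde\pi(e),\U(A))$ for a single letter using Lemma~\ref{lem:U-epsilon-prod}, then apply that lemma again to the product of $\ell(g)$ such letters. The paper is terser (it counts $\tilde\pi(\simplex{i,j})$ as a product of at most $3L$ elements of $\U(A)_\varepsilon$ and says ``another application of Lemma~\ref{lem:U-epsilon-prod} ends the proof''), while you track the sharper count $2L+1$ and make explicit the smallness hypothesis $2^{2L+1}\varepsilon<1$ needed for the second application --- an assumption the paper also relies on tacitly.
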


\begin{proof}
  Equation~\eqref{eq:1} implies that $\tilde{\pi}( \simplex{i, j} )$
  is a product of at most $3L$ elements of $\U(A)_\varepsilon$ for any
  edge $\simplex{ i, j }$ of $\Lambda$.  It follows from
  Lemma~\ref{lem:U-epsilon-prod} that $\dist( \tilde{\pi}( \simplex{i,
    j} ), \U(A) ) < 2^{3 L}\varepsilon$.  Another application of
  Lemma~\ref{lem:U-epsilon-prod} ends the proof.
\end{proof}

\begin{proposition}
  \label{prop:qrep-existence}
  There is a constant $C_0'>0$, depending only on $\Lambda$, $T$,
  $i_0$, and $s$, such that if $\mathbf{v} $ is an $\varepsilon$-flat
  $\GL(A)$-coordinate bundle on $\Lambda$, then $\alpha(\mathbf{v})$
  is an $(\mathcal{F}_\Lambda, C_0'\varepsilon)$-representation of
  $\Gamma$ on $\GL(A)$.
\end{proposition}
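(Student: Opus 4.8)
The plan is to verify the three conditions in Definition~\ref{def:q-rep} for $\pi := \alpha(\mathbf{v}) = \tilde{\pi}\circ s$, extracting the constant $C_0'$ from the estimates already assembled. Condition (c), $\pi(e) = 1_A$, is immediate since $s$ sends the neutral element of $\Gamma$ to the neutral element of $\F_\Lambda$ and $\tilde\pi$ is a group homomorphism, so $\pi(e) = \tilde\pi(1) = 1_A$. Condition (a) is essentially Lemma~\ref{lem:pi-tilde-U-epsilon}: for $\gamma = \gamma_{ij} \in \mathcal{F}_\Lambda$ we have $s(\gamma_{ij})$ of bounded word length, so $\dist(\pi(\gamma_{ij}), \U(A)) = \dist(\tilde\pi(s(\gamma_{ij})), \U(A)) < 2^{3L + \ell(s(\gamma_{ij}))}\varepsilon$; setting $M := \max\{\ell(s(\gamma)) : \gamma \in \mathcal{F}_\Lambda\}$, which depends only on $\Lambda$, $T$, $i_0$, $s$, this is at most $2^{3L+M}\varepsilon$.

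For condition (b), the multiplicativity estimate, fix $\gamma, \gamma' \in \mathcal{F}_\Lambda$. Since $\tilde\pi$ is a genuine homomorphism on $\F_\Lambda$, we have $\tilde\pi(s(\gamma))\tilde\pi(s(\gamma')) = \tilde\pi(s(\gamma)s(\gamma'))$, whereas $\pi(\gamma\gamma') = \tilde\pi(s(\gamma\gamma'))$. Now $q(s(\gamma)s(\gamma')) = \gamma\gamma' = q(s(\gamma\gamma'))$, so $s(\gamma)s(\gamma')$ and $s(\gamma\gamma')$ differ by an element of $\ker q$, i.e.\ by a product of conjugates of relators in $\mathcal{R}$. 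The key point, which I would isolate as the main work, is that $\tilde\pi$ nearly kills every relator: for a tree relator $\simplex{i,j}$ with $\simplex{i,j} \in T$, one has $\tilde\pi(\simplex{i,j}) = \baryv{I}\baryv{ij}\baryv{J}^{-1}$ where $J = (I, j)$, so this equals $\baryv{I}\baryv{ij}\baryv{ij}^{-1}\baryv{I}^{-1} = 1_A$ exactly (the tree paths to $i$ and $j$ agree except for the last edge); and for a simplex relator $\simplex{i,j}\simplex{j,k}\simplex{i,k}^{-1}$ with $\simplex{i,j,k} \in \Lambda^{(2)}$, expanding~\eqref{eq:1} gives $\tilde\pi(\simplex{i,j})\tilde\pi(\simplex{j,k})\tilde\pi(\simplex{i,k})^{-1} = \baryv{I}\,\baryv{ij}\baryv{jk}\baryv{ik}^{-1}\,\baryv{I}^{-1}$, and $\baryv{ij}\baryv{jk}\baryv{ik}^{-1} = v_{ij}(\bary{i,j})v_{jk}(\bary{j,k})v_{ik}(\bary{i,k})^{-1}$ is within $O(\varepsilon)$ of $1_A$ by the cocycle identity (iii) combined with the near-constancy (iv) of the $v$'s on $c_{ijk}$ (the three barycenters all lie in $c_{ijk}$, up to moving along edges — here one uses $\bary{i,j} \in c_{ij}$ and a short chain of (iv)-estimates), with all the flanking factors $\baryv{I}, \baryv{ij}$, etc.\ being $2^{O(L)}$-bounded in norm by Lemma~\ref{lem:U-epsilon-prod}.

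From there the argument is bookkeeping: write $s(\gamma)s(\gamma')s(\gamma\gamma')^{-1}$ as a product of $N$ conjugates $w_p r_p^{\pm 1} w_p^{-1}$ of relators, where $N$ and the word-lengths of the $w_p$ are bounded by a constant depending only on $\Lambda$, $T$, $i_0$, $s$ (this uses that $\mathcal{F}_\Lambda$ is finite, so there are finitely many pairs $(\gamma,\gamma')$ to consider, each giving one such expression). Applying $\tilde\pi$, the homomorphism property turns this into a product of $N$ terms $\tilde\pi(w_p)\tilde\pi(r_p)^{\pm 1}\tilde\pi(w_p)^{-1}$; each $\tilde\pi(r_p)^{\pm1}$ is within $O(\varepsilon)$ of $1_A$ by the relator computation above, each $\tilde\pi(w_p)^{\pm 1}$ has norm $2^{O(L + \ell(w_p))}$ by Lemmas~\ref{lem:U-epsilon-prod} and~\ref{lem:pi-tilde-U-epsilon}, and multiplying out $N$ such near-identity factors (using the elementary estimate that $\|A_1\cdots A_N - 1\| \le \sum \|A_p - 1\|\prod\|A_q\|$) gives $\|\tilde\pi(s(\gamma)s(\gamma')s(\gamma\gamma')^{-1}) - 1_A\| < C\varepsilon$ for a suitable $C$. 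Conjugating by $\tilde\pi(s(\gamma\gamma'))$ — again of bounded norm — and rearranging yields $\|\pi(\gamma\gamma') - \pi(\gamma)\pi(\gamma')\| < C_0'\varepsilon$. Taking $C_0'$ to be the maximum of the constants arising in (a) and (b) finishes the proof. The main obstacle is the relator computation for simplex relators: one must carefully track which barycenters lie in which dual cells $c_{ijk}$ and chain together the flatness estimates (iv) so that the cocycle identity (iii) can be applied, since a priori $v_{ij}$, $v_{jk}$, $v_{ik}$ are evaluated at three different points.
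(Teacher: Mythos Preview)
Your proposal is correct and follows essentially the same approach as the paper: reduce the multiplicativity defect to showing $\tilde\pi$ nearly kills each relator, then bookkeep through a fixed finite product of conjugates using Lemmas~\ref{lem:U-epsilon-prod} and~\ref{lem:pi-tilde-U-epsilon}. One small correction to your simplex-relator step: the edge barycenters $\bary{i,j}$, $\bary{j,k}$, $\bary{i,k}$ do \emph{not} lie in $c_{ijk}$ (each has one barycentric coordinate equal to zero); the paper's remedy---which is the ``short chain'' you allude to---is to evaluate all three transition functions at the single point $t=\bary{i,j,k}\in c_{ijk}$, where the cocycle identity (iii) gives $v_{ij}(t)v_{jk}(t)v_{ik}(t)^{-1}=1_A$ exactly, and then apply (iv) once per factor to compare with the $\baryv{ij}$.
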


\begin{proof}
  Write $\pi:=\alpha(\mathbf{v})$.  First we define a few constants so
  the proof will run more smoothly.

  Let $\ell_0 = \max \{ \ell( s(\gamma) ) : \gamma\in
  \mathcal{F}_\Lambda \cup \mathcal{F}_\Lambda\cdot
  \mathcal{F}_\Lambda \}$.

  If $\gamma, \gamma' \in \Gamma$, then
  $s(\gamma)s(\gamma')s(\gamma\gamma')^{-1}$ belongs to the kernel of
  $q\colon \F_\Lambda\to \Gamma$, that is, to the normal subgroup
  generated by the set of relators $\mathcal{R}$.  (See
  Notation~\ref{not:edge-path-grp}.)  For each pair $\gamma, \gamma'
  \in \mathcal{F}_\Lambda$ choose and fix a representation
  \begin{equation}\label{eqn:normal}
  s(\gamma)s(\gamma')s(\gamma\gamma')^{-1} =
  \prod_{n=1}^{m(\gamma,\gamma')} x_n r_n x_n^{-1}
  \end{equation}
  with $\{ x_n \} \subset \F_\Lambda$ and $\{ r_n \} \subset
  \mathcal{R}$.  Let $m=\max \{ m(\gamma, \gamma') : \gamma,
  \gamma'\in \mathcal{F}_\Lambda \}$ and let $\ell_1$ be the maximum
  of the lengths $\ell(x_n)$ of all the elements $x_n$ that appear in
  equation \eqref{eqn:normal} for all pairs $\gamma, \gamma'\in
  \mathcal{F}_\Lambda $.  Finally, let $C_0' = 2^{(15
    L+2\ell_1)m}\cdot 2^{4L+\ell_0}$.

  For any $\gamma, \gamma'\in \mathcal{F}_\Lambda$, we show that $\|
  \pi(\gamma)\pi(\gamma') - \pi(\gamma\gamma') \| < C_0'\varepsilon$.
  Using Lemma~\ref{lem:pi-tilde-U-epsilon} we note that $\|
  \tilde{\pi}( s(\gamma\gamma') ) \| < 1 + 2^{ 3 L +
    \ell_0}\varepsilon< 2^{ 4 L + \ell_0}$ and hence
  \begin{align}\label{eq:2}
    \| \pi(\gamma)\pi(\gamma') - \pi(\gamma\gamma') \|
    &\leq
    \| \tilde{\pi}( s(\gamma)s(\gamma')s(\gamma\gamma')^{-1} ) - 1 \|
    \| \tilde{\pi}( s(\gamma\gamma') ) \| \\
    &\leq
     2^{ 4 t + \ell_0}
     \| \tilde{\pi}( s(\gamma)s(\gamma')s(\gamma\gamma')^{-1} ) - 1 \|
     .\nonumber
  \end{align}

  Now, $r_n\in \mathcal{R}$ implies that either $r_n = \simplex{ i,
    j}$ is an edge of $T$ in which case $\tilde{\pi}( r_n ) = 1$ or
  $r_n = \simplex{ i, j } \simplex{ j, k } \simplex{ i, k }^{-1}$ for
  some vertices $i, j, k$, and hence
  \[
  \tilde{\pi}( r_n )
  =
  \baryv{I} \baryv{ij} \baryv{J}^{-1}
  \cdot
  \baryv{J} \baryv{jk} \baryv{K}^{-1}
  \cdot
  \baryv{K} \baryv{ik}^{-1} \baryv{I}^{-1}
  =
  \baryv{I} \cdot
  \baryv{ij} \baryv{jk} \baryv{ik}^{-1} \cdot
  \baryv{I}^{-1}.
  \]
  Let $t$ be the barycenter $\bary{ i, j, k }$.  Since
  $v_{ij}$, $v_{jk}$, and $v_{ik}^{-1}$ are $\varepsilon$-constant
  with norm $\leq 1+\varepsilon$, we get that
  \begin{align*}
    \| \tilde{\pi}( r_n ) - 1 \|
    &\leq
    \| \baryv{I} \cdot
       \baryv{ij} \baryv{jk} \baryv{ik}^{-1} \cdot
       \baryv{I}^{-1}-
       \baryv{I} \cdot
       v_{ij}(t) v_{jk}(t) v_{ik}^{-1}(t)
       \cdot
       \baryv{I}^{-1}
    \|\\
    &< \| \baryv{I} \| \| \baryv{I}^{-1} \| (1+\varepsilon)^2 3\varepsilon
    \\
    &< 2^{ 2L + 4 }\varepsilon \leq 2^{ 6 L }\varepsilon.
  \end{align*}
  By Lemma~\ref{lem:pi-tilde-U-epsilon} $\| \tilde{\pi}( x_n ) \|, \|
  \tilde{\pi}( x_n^{-1} ) \|\leq 1+2^{ 3L + \ell(x_n)}\varepsilon< 2^{
    4L + \ell_1}$.  Therefore
  \[
  \|\tilde{\pi}(x_n)\tilde{\pi}( r_n) \tilde{\pi}(x_n^{-1})-1\|< \|
  \tilde{\pi}( x_n ) \| \| \tilde{\pi}( x_n^{-1} ) \| \| \tilde{\pi}(
  r_n ) - 1 \| \leq 2^{ 14 L +2\ell_1}\varepsilon.
  \]

  Because
  \[
  \tilde{\pi}( s(\gamma)s(\gamma')s(\gamma\gamma')^{-1} )=
  \prod_{n=1}^{m(\gamma,\gamma')} \tilde{\pi}(x_n)\tilde{\pi}( r_n)
  \tilde{\pi}(x_n^{-1}),
  \]
  applying Lemma~\ref{lem:pi-tilde-U-epsilon} again we get

  \begin{equation}
    \label{eq:3}
    \| \tilde{\pi}( s(\gamma)s(\gamma')s(\gamma\gamma')^{-1})
    - 1 \|
    <
    ( 1 + 2^{14L+2\ell_1} )^m\varepsilon
    \leq
    2^{(15L+2\ell_1)m}\varepsilon.
  \end{equation}

  Combined with \eqref{eq:2}, this proves that for all
  $\gamma,\gamma'\in \mathcal{F}_\Lambda$
  \begin{align*}
    \| \pi(\gamma)\pi(\gamma') - \pi(\gamma\gamma') \|< 2^{ 4 L +
      \ell_0}\cdot 2^{(15 L+2\ell_1)m}\varepsilon = C_0'\varepsilon.
  \end{align*}

  We must also prove that $\pi(\gamma) \in \U(A)_{C_0'\varepsilon}$ if
  $\gamma\in \mathcal{F}_\Lambda$.  This follows immediately from
  Lemma~\ref{lem:pi-tilde-U-epsilon} since
  $\pi(\gamma)=\tilde{\pi}(s(\gamma))$ and $\ell(\gamma) \leq \ell_0$.
\end{proof}

Let us single out an estimate from the proof for later use
(cf. \eqref{eq:3}).

\begin{lemma}
  \label{lem:bound-on-R}
  There exists $K > 0$, depending only on $\Lambda$, $T$, $i_0$, and
  $s$, such that the following holds.

  Suppose $\mathbf{v}$ is an $\varepsilon$-flat $\GL(A)$-coordinate
  bundle on $\Lambda$.  Let $\tilde{\pi}=\tilde{\pi}_{\mathbf{v}}$ be
  as in Definition~\ref{def:quasi-rep-constr}.  If $\simplex{i, j}\in
  \Lambda^{(1)}$, then $g_{ij} := s(\gamma_{ij}) \cdot \simplex{i,
    j}^{-1} \in \ker q$ and $\| \tilde{\pi}(g_{ij}) - 1_A\| <
  K\varepsilon.$
\end{lemma}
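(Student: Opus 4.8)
The plan is to isolate the single relator comparison that already appears inside the proof of Proposition~\ref{prop:qrep-existence} and track the relevant constant. By the standard presentation of $\Gamma$ (Notation~\ref{not:edge-path-grp}), the element $g_{ij} = s(\gamma_{ij})\cdot\simplex{i,j}^{-1}$ lies in $\ker q$ because $q(s(\gamma_{ij})) = \gamma_{ij} = q(\simplex{i,j})$; hence $g_{ij}$ is a product of conjugates of relators in $\mathcal{R}$. Exactly as in equation~\eqref{eqn:normal}, I would fix once and for all (depending only on $\Lambda$, $T$, $i_0$, $s$) a representation
\[
g_{ij} = \prod_{n=1}^{m_{ij}} x_n r_n x_n^{-1},
\qquad x_n\in\F_\Lambda,\ r_n\in\mathcal{R},
\]
and let $m' = \max_{\simplex{i,j}} m_{ij}$ and $\ell' = \max\ell(x_n)$ over all these finitely many words.

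Next I would apply $\tilde{\pi}=\tilde{\pi}_{\mathbf{v}}$ and reuse verbatim the estimates from the proof of Proposition~\ref{prop:qrep-existence}. For each relator $r_n$: if $r_n$ is an edge of $T$ then $\tilde{\pi}(r_n)=1$; otherwise $r_n = \simplex{i,j}\simplex{j,k}\simplex{i,k}^{-1}$ and the telescoping computation there gives $\tilde{\pi}(r_n) = \baryv{I}\,\baryv{ij}\baryv{jk}\baryv{ik}^{-1}\,\baryv{I}^{-1}$, whence $\|\tilde{\pi}(r_n)-1\| < 2^{6L}\varepsilon$ using that $v_{ij},v_{jk},v_{ik}^{-1}$ are $\varepsilon$-constant of norm $\le 1+\varepsilon$ and the norm bounds on $\baryv{I},\baryv{I}^{-1}$ from Lemma~\ref{lem:pi-tilde-U-epsilon}. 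Then, since $\|\tilde{\pi}(x_n)\|,\|\tilde{\pi}(x_n^{-1})\| \le 1 + 2^{3L+\ell(x_n)}\varepsilon < 2^{4L+\ell'}$ for $\varepsilon$ small, each conjugate satisfies $\|\tilde{\pi}(x_n r_n x_n^{-1})-1\| \le 2^{14L+2\ell'}\varepsilon$. Finally, multiplying the $m_{ij}\le m'$ conjugates together (telescoping as in the derivation of \eqref{eq:3}) yields
\[
\|\tilde{\pi}(g_{ij}) - 1_A\| < (1 + 2^{14L+2\ell'})^{m'}\varepsilon \le 2^{(15L+2\ell')m'}\varepsilon =: K\varepsilon,
\]
so $K = 2^{(15L+2\ell')m'}$ works and depends only on $\Lambda$, $T$, $i_0$, $s$.

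Since this is essentially an extraction of a computation already carried out, there is no serious obstacle; the only point requiring a word of care is that the constant $K$ must be manufactured from the \emph{fixed} choices of conjugate-decompositions of the finitely many elements $g_{ij}$, so that it does not depend on $A$, $\mathbf{v}$, or $\varepsilon$ — this is handled exactly as $\ell_1$ and $m$ were handled in Proposition~\ref{prop:qrep-existence}. (Implicitly one also restricts to $\varepsilon$ small enough that the norm bounds $1 + 2^{3L+\ell(x_n)}\varepsilon < 2^{4L+\ell'}$ hold; alternatively one absorbs this into a mild enlargement of $K$, or simply notes the statement is only used for small $\varepsilon$.)
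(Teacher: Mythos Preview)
Your proposal is correct and is exactly the paper's approach: the paper does not give a separate proof of this lemma but simply says ``Let us single out an estimate from the proof for later use (cf.~\eqref{eq:3}),'' and you have spelled out precisely that extraction, replacing $s(\gamma)s(\gamma')s(\gamma\gamma')^{-1}$ by $g_{ij}=s(\gamma_{ij})\simplex{i,j}^{-1}$ and tracking the analogous constants $m'$ and $\ell'$.
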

\begin{proposition}
  \label{prop:T-normalized}
  Suppose $\mathbf{v} = \{ v_{ij} \}$ is an $\varepsilon$-flat
  $\GL(A)$-coordinate bundle on $\Lambda$.  Then there exist a
  constant $C > 0$, depending only on $\Lambda$, and elements
  $\lambda_i\in \GL(A)$ such that the coordinate bundle $ \mathbf{w} =
  \{ w_{ij} \}$ defined by \( w_{ij} = \lambda_i v_{ij} \lambda_j^{-1}
  \) is normalized and $C\varepsilon$-flat; $\mathbf{v}$ and
  $\mathbf{w}$ yield isomorphic bundles on $|\Lambda|$.
\end{proposition}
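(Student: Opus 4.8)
The plan is to carry out the standard gauge transformation that trivializes the transition system $\mathbf v$ over the maximal tree $T$. For each vertex $i$ of $\Lambda$, let $I_i$ be the unique edge-path in $T$ from the root $i_0$ to $i$, and set $\lambda_i := \baryv{I_i} \in \GL(A)$ (so $\lambda_{i_0} = 1_A$, the empty product), in the notation of Notation~\ref{not:baryv}; then define $w_{ij} := \lambda_i v_{ij}\lambda_j^{-1}$. The first observation is that, since each $\lambda_i$ is a constant element of $\GL(A)$, conditions (b) and (c) of Definition~\ref{def:af-bundle} for $\mathbf w$ follow from those for $\mathbf v$ by one-line algebra; and, viewing $\mathbf v$ and $\mathbf w$ as transition data extended to the open cover $\{V_i\}$ of Section~\ref{sec:7}, the $\lambda_i$ form a {\v C}ech $0$-cochain which carries $\mathbf v$ to $\mathbf w$, so $\mathbf v$ and $\mathbf w$ yield isomorphic bundles on $|\Lambda|$.

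Next I would verify that $\mathbf w$ is normalized. Fix an edge $\simplex{i,j}$ of $T$; in the rooted tree $(T,i_0)$ one of its endpoints is the parent of the other, say the path $I_j$ is $I_i$ followed by the edge $\simplex{i,j}$, so that $\lambda_j = \baryv{I_j} = \baryv{I_i}\,\baryv{ij} = \lambda_i\, v_{ij}(\bary{i,j})$. Then
\[
w_{ij}(\bary{i,j}) = \lambda_i\, v_{ij}(\bary{i,j})\,\lambda_j^{-1} = \lambda_i\, v_{ij}(\bary{i,j})\, v_{ij}(\bary{i,j})^{-1}\,\lambda_i^{-1} = 1_A,
\]
and the case where $i$ is the parent of $j$ comes out the same way after using $\bary{j,i} = \bary{i,j}$ together with condition (b), which gives $v_{ji}(\bary{j,i})\,v_{ij}(\bary{i,j}) = 1_A$. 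Using $\bary{j,i} = \bary{i,j}$ and $w_{ji} = w_{ij}^{-1}$ on $c_{ij}$ once more, $w_{ji}(\bary{j,i}) = 1_A$ as well, so $\mathbf w$ is normalized in the sense of Definition~\ref{def:normalized-coord-bundle}.

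It then remains to bound the distortion, and here Lemma~\ref{lem:U-epsilon-prod} does everything. The path $I_i$ has at most $L$ edges (Notation~\ref{not:ell-and-L}), and each of its factors $\baryv{i_k i_{k+1}} = v_{i_k i_{k+1}}(\bary{i_k,i_{k+1}})$, as well as each factor $v_{i_{k+1}i_k}(\bary{\,\cdot\,})$ of $\lambda_i^{-1}$, lies in $\U(A)_\varepsilon$ by condition (a); so $\lambda_i, \lambda_i^{-1} \in \U(A)_{2^L\varepsilon}$ by Lemma~\ref{lem:U-epsilon-prod}, and $\|\lambda_i\|,\|\lambda_i^{-1}\| < 1 + 2^L\varepsilon$ (we may restrict to $\varepsilon < 1$, the relevant range). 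Consequently $w_{ij}(x) = \lambda_i v_{ij}(x)\lambda_j^{-1}$ is a product of at most $2L+1$ elements of $\U(A)_\varepsilon$, hence $w_{ij}(x) \in \U(A)_{2^{2L+1}\varepsilon}$ (condition (a) for $\mathbf w$), while $\|w_{ij}(x) - w_{ij}(y)\| \le \|\lambda_i\|\,\|\lambda_j^{-1}\|\,\|v_{ij}(x)-v_{ij}(y)\| < (1+2^L\varepsilon)^2\varepsilon < 2^{2L+2}\varepsilon$ (condition (d)). Taking $C := 2^{2L+2}$, which depends only on $\Lambda$ (via $L$, $T$, $i_0$), completes the argument.

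I do not foresee a genuine obstacle. The one step that needs a little care is the normalization identity, where one must combine the symmetry $\bary{i,j} = \bary{j,i}$ with condition (b) and keep straight which endpoint of a tree edge is closer to the root; everything else is bookkeeping around Lemma~\ref{lem:U-epsilon-prod}.
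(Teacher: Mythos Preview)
Your proof is correct and follows essentially the same route as the paper: define $\lambda_i=\baryv{I_i}$ along the unique tree path, conjugate, and control the distortion via Lemma~\ref{lem:U-epsilon-prod}; your constant $C=2^{2L+2}$ even coincides with the paper's $4^{L+1}$. The only cosmetic differences are that you spell out the normalization identity in more detail, and you invoke the {\v C}ech $0$-cochain viewpoint for the bundle isomorphism where the paper simply cites \cite[Theorem~3.2]{Karoubi78}.
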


\begin{proof}
  Recall that $L$ is the length of the longest path in $T$ that starts
  at the root $i_0$ and has no backtracking. We show that $C:=4^{L+1}$
  verifies the statement.  For each vertex $i$ of $\Lambda$, let $I =
  (i_0, \dots, i)$ be the unique path along $T$ from $i_0$ to $i$ and
  using notation as in \ref{not:baryv} and \ref{def:quasi-rep-constr}
  set $\lambda_i:=\baryv{I}$. Thus \( w_{ij} = \baryv{I}v_{ij}
  \baryv{J}^{-1}.  \)

  Then clearly $w_{ij}(\bary{i,j})=1_A$ for all $\simplex{i,j}\in
  T^{(1)}$. Moreover
  $\|\lambda_i\|,\|\lambda^{-1}_i\|<(1+\varepsilon)^L< 2^L$ since
  $\|\baryv{i,j}\|<1+\varepsilon$ by hypothesis.  If $\simplex{ i, j
  }$ is a 1-simplex of $\Lambda$ and $x, y\in c_{ij}$, then
  \begin{align*}
    \| w_{ij}(x) - w_{ij}(y) \|
    &=
    \| \lambda_i (v_{ij}(x) -
       v_{ij}(y) )\lambda_{j}^{-1} \|
    < 2^L \cdot \varepsilon  \cdot 2^L <C\varepsilon.
  \end{align*}
  Moreover, since $\lambda_i, \lambda_{j}^{-1}\in
  \U(A)_{2^L\varepsilon}$ by Lemma~\ref{lem:U-epsilon-prod} and
  $v_{ij}(x)\in \U(A)_\varepsilon$ it follows immediately that
  $w_{ij}(x)\in \U(A)_{4^{L+1}\varepsilon}$.  Therefore, $\mathbf{w}$
  is $ C\varepsilon$-flat.

  By \cite[Theorem~3.2]{Karoubi78} $\mathbf{v}$ and $\mathbf{w}$ yield
  isomorphic bundles on $|\Lambda|$.
\end{proof}

\section{From quasi-representations to almost flat bundles}
\label{sec:qrep-to-bundle}
In this section we describe the map $\beta$ announced in
Section~\ref{sec:results}.  The idea is to extend a
quasi-representation of $\Gamma$ to a quasi-representation of the
fundamental groupoid of $\Lambda$ and reinterpret the latter as a
lattice gauge field.  This enables us to invoke a construction of
Phillips and Stone \cite{Phillips-Stone86, Phillips-Stone90} that
associates an almost flat coordinate bundle to a lattice gauge field
with controlled distortion and small modulus of continuity. For the
sake of completeness we give a full account of this construction.

Let $0 < \delta < 1/140L$, with $L$ as in \ref{not:ell-and-L}, and let
an $(\mathcal{F}_\Lambda, \delta)$-representation $\pi$ of $\Gamma$ be
given.  We will define an almost flat coordinate bundle $\beta(\pi)$
on $\Lambda$.

The following notation will be used in the definition.

\begin{notation}
  We fix a partial order \textbf{o} on the vertices of $\Lambda$ such
  that the set of vertices of any simplex of $\Lambda$ is a totally
  ordered set under \textbf{o}.  One may always assume that such an
  order exists by passing to the first barycentric subdivision of
  $\Lambda$: if $\hat{\sigma}_1$ and $\hat{\sigma}_2$ are the
  barycenters of simplices $\sigma_1$ and $\sigma_2$ of $\Lambda$,
  define $\hat{\sigma}_1 < \hat{\sigma}_2$ if $\sigma_1$ is a face of
  $\sigma_2$ (cf. \cite{Phillips-Stone90}).

  When we write $\sigma = \simplex{i_1 , \dots, i_m}$ it is implicit
  that the vertices of $\sigma$ are written in increasing
  \textbf{o}-order.
\end{notation}

\begin{notation}
  \label{not:mod-bary-coords}
  Following \cite{Phillips-Stone90}, we re-parametrize the dual cell
  blocks $c_i^\sigma$ using ``modified barycentric coordinates''
  $(s_0, \dots, s_r)$.  These are defined in terms of the barycentric
  coordinates by $s_j = t_j / t_i$.  In these coordinates $c_i^\sigma$
  is identified with the cube
  \[
  \{ (s_0, \dots, s_i, \dots, s_r)
  \mid
  s_i = 1 \text{ and } 0\leq s_j \leq 1\text{ for all }j\not= i\}.
  \]
  See Figure~\ref{fig:barycentric-vs-modified}.
\end{notation}

\begin{figure}
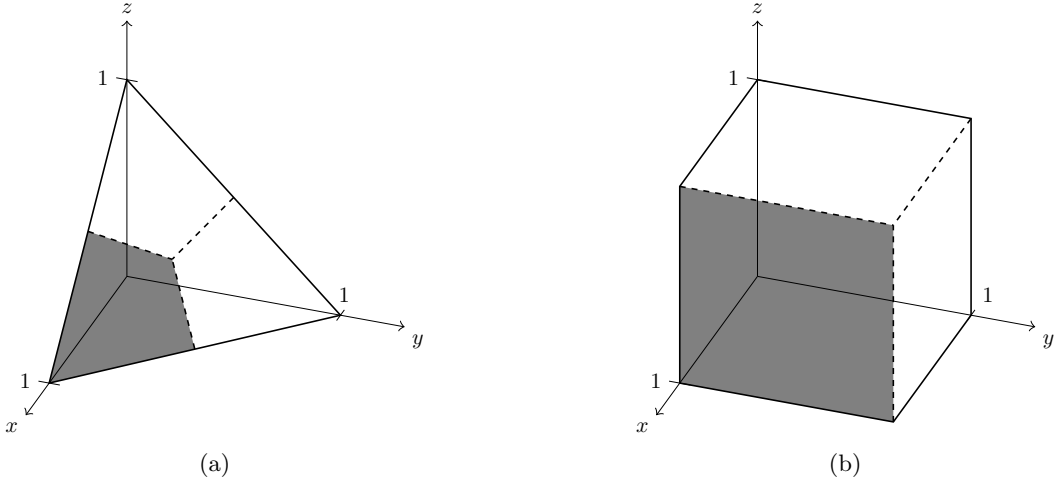

  \centering
  \subfloat[]{\includegraphics[width=.35\textwidth]%
    {barycentric-cell}}\hspace{.15\textwidth}
  \subfloat[]{\includegraphics[width=.35\textwidth]%
    {modified-barycentric-cell}}
  \caption{(a) The identification of a dual cell block in a 2-simplex
    as given by barycentric coordinates.
    (b) The identification of a dual cell block in a 2-simplex
    as given by modified barycentric coordinates.}
  \label{fig:barycentric-vs-modified}
\end{figure}

The construction of $\beta(\pi)$ is inspired by (and borrows heavily
from) the work of Phillips and Stone \cite{Phillips-Stone90}.  It is
somewhat involved, but we outline the procedure in the following
definition before going into the details.

\begin{definition}
  \label{def:cocycle}
  Let $i$ and $j$ be adjacent vertices of $\Lambda$.  We will define
  $\beta(\pi) = \{ v_{ij}\colon c_{ij}\to \GL(A) \}$ by defining
  $v_{ij}$ on all the dual cell blocks $c_{ij}^\sigma$ such that
  $\sigma$ contains $i$ and $j$.

  \begin{enumerate}
  \item Let \( u_{ij}:=\bpi(\gamma_{ij}), \) where $\bpi\colon
    \Gamma\to \U(A)$ is the perturbation of $\pi$ provided by
    Proposition~\ref{prop:perturb-qrep}.
  \item Suppose $\sigma$ is a simplex in $\Lambda$ containing $i$ and
    $j$ and that $i < j$.  Write $\sigma$ (in increasing
    \textbf{o}-order) as $\sigma = \simplex{ 0, \dots, i, \dots, j,
      \dots, r}$.

    For an \textbf{o}-ordered subset of vertices $I = \{ i=i_1 < i_2 <
    i_3 < \dots < i_m=j\}$, set
    \[
    u_I :=u_{i_1 i_2}u_{i_2 i_3} \dots u_{i_{m-1} i_m},
    \]
    (where it is understood that if $I=\{i<j\}$, then $u_I =u_{ij}$).

  \item Define $v_{ij}^\sigma\colon c_{ij}^\sigma\to A$, using
    modified barycentric coordinates on $c_{ij}^\sigma$ (see
    \ref{not:mod-bary-coords}), as follows.  For $\mathbf{s}=(s_0,
    \dots, s_i = 1, \dots, s_j = 1, \dots, s_r)\in c_{ij}^\sigma$ let
    \begin{displaymath}
      v_{ij}^\sigma(\mathbf{s}) := \sum_I\, \lambda_I(\mathbf{s}) u_I,
    \end{displaymath}
    where
    \[
    \lambda_I(\mathbf{s})=\lambda_I^{\sigma}(\mathbf{s})=\prod_{i\leq
      k \leq j}s_k'
    \]
    with $s'_k=s_k$ if $k\in I$ and $s'_k=1-s_k$ if $k \notin I$.

    The sum above is over the subsets $I$ of $\{ i, \dots, j \}
    \subseteq \sigma^{(0)}$ that contain both $i$ and $j$ as above.
    One can identify the subsets $I$ with ascending paths from $i$ to
    $j$ that are contained in $\sigma$.  Let us note that $\sum_I\,
    \lambda_I(\mathbf{s})=1$ for $\mathbf{s}\in c_{ij}^\sigma$ since
    $\prod_{i<k<j}((s_k+(1-s_k))=1$.  We will see that the range of
    $v_{ij}^\sigma$ is actually contained in $\GL(A)$.  If $i > j$,
    let $v_{ij}^\sigma$ be the pointwise inverse of $v_{ji}^\sigma$.
  \item Corollary~\ref{cor:cocycle-well-defined} below shows that for
    each $\simplex{ i, j } \in \Lambda^{(1)}$ the collection of all
    $v_{ij}^\sigma$ above determines a function $v_{ij}\colon
    c_{ij}\to \GL(A)$.  We define
    \(
    \beta(\pi) := \{ v_{ij} \}.
    \)
  \end{enumerate}
\end{definition}

\begin{proposition}
  \label{prop:almost-flat-existence}
  There exist positive numbers $C_0''$ and $\delta_0$, depending only
  on $\Lambda$, such that if $0 < \delta < \delta_0$ and $\pi\colon
  \Gamma\to \GL(A)$ is an $(\mathcal{F}_\Lambda,
  \delta)$-representation of $\Gamma$, then $\beta(\pi)$ is a
  $C_0''\delta$-flat $\GL(A)$-coordinate bundle on $\Lambda$.
\end{proposition}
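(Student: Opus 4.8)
The plan is to verify the four conditions (a)--(d) of Definition~\ref{def:af-bundle} for $\mathbf{v} = \beta(\pi) = \{v_{ij}\}$, working one dual cell block $c_{ij}^\sigma$ at a time and then checking compatibility. The starting point is the perturbation $\bpi\colon\Gamma\to\U(A)$ of $\pi$ from Proposition~\ref{prop:perturb-qrep}, so that the $u_{ij} = \bpi(\gamma_{ij})$ are genuine unitaries and $u_{ij}$ is within $O(\delta)$ of $\pi(\gamma_{ij})$; we also have the approximate multiplicativity $\|u_{ij}u_{jk} - u_{ik}\| = O(\delta)$ inherited (up to a constant depending on $L$) from the $(\mathcal{F}_\Lambda,\delta)$-representation property via the relators of $\Gamma$, since $\gamma_{ij}\gamma_{jk} = \gamma_{ik}$ whenever $i,j,k$ span a simplex. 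First I would record this ``almost $1$-cocycle'' estimate $\|u_{ij}u_{jk} - u_{ik}\| < C\delta$ as the basic quantitative input; it is essentially \eqref{eq:3}/Lemma~\ref{lem:bound-on-R} applied to a single relator.

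Next I would analyze $v_{ij}^\sigma(\mathbf{s}) = \sum_I \lambda_I(\mathbf{s})\,u_I$ on a fixed block. Since $\sum_I\lambda_I(\mathbf{s}) = 1$ and the coefficients are nonnegative, $v_{ij}^\sigma(\mathbf{s})$ is a convex combination of the unitaries $u_I$. The key point is that all the $u_I$ (for $I$ an ascending path from $i$ to $j$ inside $\sigma$) are mutually within $O(\delta)$ of each other: any two such paths differ by finitely many elementary ``triangle moves'' $u_{\cdots k k'\cdots}\leftrightarrow u_{\cdots k k'' k'\cdots}$, each move costing $O(\delta)$ by the almost-cocycle estimate, and the number of moves is bounded in terms of $r\le\dim\Lambda$. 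Hence $v_{ij}^\sigma(\mathbf{s})$ lies within $O(\delta)$ of the single unitary $u_{ij}$, giving (a) with a constant depending only on $\Lambda$; in particular, for $\delta<\delta_0$ small, $v_{ij}^\sigma(\mathbf{s})\in\U(A)_{O(\delta)}\subset\GL(A)$, which is the claim that the range lies in $\GL(A)$. For (d) (the ``$\varepsilon$-constant'' condition) I would estimate $\|v_{ij}^\sigma(\mathbf{s}) - v_{ij}^\sigma(\mathbf{s}')\| \le \sum_I|\lambda_I(\mathbf{s}) - \lambda_I(\mathbf{s}')|\cdot\|u_I\| \le \sum_I|\lambda_I(\mathbf{s}) - \lambda_I(\mathbf{s}')|\cdot 2$, and then use that each $\lambda_I$ is a product of at most $r$ factors of the form $s_k$ or $1-s_k$, each in $[0,1]$, so the map $\mathbf{s}\mapsto(\lambda_I(\mathbf{s}))_I$ is Lipschitz with constant depending only on $r$; but $\mathbf{s},\mathbf{s}'$ range over the whole cube, so $\|\mathbf{s}-\mathbf{s}'\|$ is $O(1)$ and this only gives $\|v_{ij}^\sigma(\mathbf{s}) - v_{ij}^\sigma(\mathbf{s}')\| = O(1)$, \emph{not} $O(\delta)$ --- so one must instead argue: since all $u_I$ are within $O(\delta)$ of $u_{ij}$, write $v_{ij}^\sigma(\mathbf{s}) = u_{ij} + \sum_I\lambda_I(\mathbf{s})(u_I - u_{ij})$, whence $\|v_{ij}^\sigma(\mathbf{s}) - v_{ij}^\sigma(\mathbf{s}')\| = \|\sum_I(\lambda_I(\mathbf{s}) - \lambda_I(\mathbf{s}'))(u_I - u_{ij})\| \le (\sum_I|\lambda_I(\mathbf{s}) - \lambda_I(\mathbf{s}')|)\cdot O(\delta) = O(\delta)$, using again $\sum_I\lambda_I\equiv 1$ so the coefficient sum telescopes and the differences are bounded. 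That is the real mechanism behind (d).

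Conditions (b) and (c) are the gluing/consistency statements. For (c), on a block $c_{ijk}^\sigma$ one must show $v_{ik}^\sigma(\mathbf{s}) = v_{ij}^\sigma(\mathbf{s})v_{jk}^\sigma(\mathbf{s})$; since each factor is $u_{ij}$, resp.\ $u_{jk}$, resp.\ $u_{ik}$, up to $O(\delta)$, and $u_{ij}u_{jk} = u_{ik} + O(\delta)$, both sides agree up to $O(\delta)$ --- but (c) demands \emph{exact} equality. Here I would appeal to the explicit multilinear formula: expanding $v_{ij}^\sigma v_{jk}^\sigma$ as a double sum over ascending paths $I$ from $i$ to $j$ and $I'$ from $j$ to $k$, the product of coefficients $\lambda_I^{(i,j)}(\mathbf{s})\lambda_{I'}^{(j,k)}(\mathbf{s})$ should reorganize exactly into $\lambda_{I\cup I'}^{(i,k)}(\mathbf{s})$ summed over ascending $i$-to-$k$ paths through $j$ --- this is a bookkeeping identity about the coordinates $s_\ell$ and the concatenation of paths, of the type proved in Phillips--Stone \cite{Phillips-Stone90}; it, together with Corollary~\ref{cor:cocycle-well-defined} for patching the $v_{ij}^\sigma$ across different $\sigma$ sharing the face $\langle i,j\rangle$, is what makes $\beta(\pi)$ literally a coordinate bundle. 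Condition (b), $v_{ij} = v_{ji}^{-1}$, is then immediate from the definition (the case $i>j$ is defined to be the pointwise inverse). Collecting the constants: each of (a)--(d) holds with a bound $C\delta$ where $C$ depends only on $L$ and $\dim\Lambda$, i.e.\ only on $\Lambda$; set $C_0'' = \max$ of these, shrink $\delta_0$ below $1/140L$ and below the threshold needed to keep $C_0''\delta < 1$ so that $\U(A)_{C_0''\delta}\subset\GL(A)$. The main obstacle is the bookkeeping identity for (c) --- getting an \emph{exact} cocycle relation out of the multilinear partition-of-unity formula, rather than the easy approximate one --- and, secondarily, making sure the ``$\varepsilon$-constant'' estimate (d) exploits the $O(\delta)$-closeness of all the $u_I$ rather than naive Lipschitz bounds on the $\lambda_I$.
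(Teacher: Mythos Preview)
Your proposal is correct and follows essentially the same route as the paper: the core estimate is $\|v_{ij}^\sigma(\mathbf{s}) - u_{ij}\| = O(\delta)$ (from $\|u_I - u_{ij}\| = O(\delta)$ plus $\sum_I\lambda_I = 1$), from which (a) and (d) follow, while the exact identities (b) and (c) together with the patching across blocks are exactly the content of Propositions~\ref{Prop_restriction}, \ref{Prop_cocycle} and Corollary~\ref{cor:cocycle-well-defined}. Two small remarks: the almost-cocycle input $\|u_{ij}u_{jk}-u_{ik}\|<70\delta$ comes directly from Proposition~\ref{prop:perturb-qrep}(4) (since $\gamma_{ij}\gamma_{jk}=\gamma_{ik}\in\mathcal{F}_\Lambda$), not from Lemma~\ref{lem:bound-on-R}; and your argument for (d) can be shortened---once you know $\|v_{ij}(x)-u_{ij}\|<C\delta$ for every $x$, the triangle inequality through $u_{ij}$ gives $\|v_{ij}(x)-v_{ij}(y)\|<2C\delta$ immediately, with no need to track $\sum_I|\lambda_I(\mathbf s)-\lambda_I(\mathbf s')|$. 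You should also note that conditions (a) and (d) for the case $i>j$ (where $v_{ij}$ is \emph{defined} as $v_{ji}^{-1}$) require the elementary inverse estimate \eqref{eq:4}, which doubles the constant; this is why the paper ends up with $C_0''=280L$.
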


The rest of the section is devoted to the proof of
Proposition~\ref{prop:almost-flat-existence}.

\begin{remark}
  \label{rem:bundle-constr}
  The construction described in Definition~\ref{def:cocycle} is an
  attempt to have $v_{ij}$ be ``as constant as possible'' and equal to
  $u_{ij}$ at the barycenter of $\simplex{i, j}$
  (cf. \cite[Sec.~2]{Phillips-Stone86}).  The cocycle condition one
  might hope for would force relations of the form $u_{ij}u_{jk} =
  u_{ik}$, which do not necessarily hold since $\pi$ (and therefore
  $\bpi$) is only approximately multiplicative.  The definition of
  $v_{ij}^\sigma$ uses successive linear interpolation to account for
  this.  For example:
  \begin{enumerate}
  \item If $\sigma = \simplex{ 0, 1, 2 }$, then $v_{01}^\sigma =
    u_{01}$ and $v_{12}^\sigma =
    u_{12}$, but
    \[
    v_{02}^\sigma(s_0 = 1, s_1, s_2 = 1) =
    s_1u_{01}u_{12} + (1 - s_1)u_{02}.
    \]
  \item If $\sigma = \simplex{ 0, 1, 2, 3}$, then $v_{01}^\sigma=u_{01}$,
    $v_{12}^\sigma=u_{12}$,  $v_{23}^\sigma=u_{23},$
     \[
    v_{02}^\sigma(s_0 = 1, s_1, s_2 = 1) =
    s_1u_{01}u_{12} + (1 - s_1)u_{02}
    ,\]
     \[
    v_{13}^\sigma(s_1 = 1, s_2, s_3 = 1) =
    s_2u_{12}u_{23} + (1 - s_2)u_{13}
    ,\]
    and
    \begin{align*}
      v_{03}^\sigma(s_0=1, s_1, s_2, s_3=1) &=
      s_1s_2u_{01}u_{12}u_{13} + s_1(1 - s_2)u_{01}u_{13} +\mbox{}\\
      &\quad
      + (1 - s_1)s_2 u_{02}u_{23}
      + (1 - s_1)(1 - s_2)u_{03}.
    \end{align*}
  \end{enumerate}

  Notice that in the modified barycentric coordinates $(s_0, \dots,
  s_r)$ the barycenter of $\simplex{i ,j}$ is given by $s_i = s_j = 1$
  and $s_k = 0$ for all $k\not\in \{i, j\}$.  Therefore,
  $v_{ij}^\sigma(\simplex{i,j}\,\hat\,) = u_{ij}$ as desired.
\end{remark}

To start the construction, we first perturb $\pi$ slightly so that we
can deal with unitary elements instead of just invertible ones when
convenient.

\begin{proposition}
  \label{prop:perturb-qrep}
  Given an $(\mathcal{F}_\Lambda, \delta)$-representation $\pi\colon
  \Gamma\to \GL(A)$, $0<\delta < 1/7$, there exists a function
  $\bpi\colon \Gamma\to \U(A)$ such that
  \begin{enumerate}
  \item $\bpi(e) = 1_A$;
  \item $\bpi(\gamma) \in \U(A)$ for all $\gamma\in
    \mathcal{F}_\Lambda$;
  \item $\bpi(\gamma^{-1}) = \bpi(\gamma)^*$ for all $\gamma \in
    \mathcal{F}_\Lambda$;
  \item $\| \bpi(\gamma \gamma') - \bpi(\gamma)\bpi(\gamma') \| <
    70\delta$ for all $\gamma, \gamma' \in \mathcal{F}_\Lambda$ with
    $\gamma\gamma' \in \mathcal{F}_\Lambda$; and
  \item $\| \bpi(\gamma) - \pi(\gamma) \| < 20\delta$ for all $\gamma
    \in \mathcal{F}_\Lambda$.
  \end{enumerate}
\end{proposition}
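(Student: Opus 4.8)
The plan is to replace each $\pi(\gamma)$, for $\gamma$ in the finite set $\mathcal{F}_\Lambda$, by a nearby \emph{unitary} obtained from a polar decomposition, after first symmetrizing the input so that compatibility with inverses (property (3)) becomes automatic; outside $\mathcal{F}_\Lambda$ one may simply put $\bpi(\gamma) = 1_A$, as only the values $\bpi(\gamma_{ij})$ at edges are used afterwards.

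First I would record a preliminary estimate. For $\gamma \in \mathcal{F}_\Lambda$ pick a unitary $u = u(\gamma)$ with $\norm{\pi(\gamma) - u} < \delta$ (available by Definition~\ref{def:q-rep}); since $\mathcal{F}_\Lambda$ is symmetric and $\gamma\gamma^{-1} = e \in \mathcal{F}_\Lambda$ with $\pi(e) = 1_A$, the quasi-multiplicativity of $\pi$ gives $\norm{\pi(\gamma)\pi(\gamma^{-1}) - 1_A} < \delta$, and combining this with $\norm{\pi(\gamma)^{-1}} \le (1-\delta)^{-1}$ yields $\norm{\pi(\gamma^{-1})^* - u} \le 2\delta/(1-\delta)$. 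Thus $\pi(\gamma)$ and $\pi(\gamma^{-1})^*$ are both close to the \emph{same} unitary $u$. I would then set $c_\gamma := \tfrac12\bigl(\pi(\gamma) + \pi(\gamma^{-1})^*\bigr)$, so that $c_{\gamma^{-1}} = c_\gamma^*$, and observe that $\norm{c_\gamma - u} < \sqrt{2}-1$ — this is exactly where the hypothesis $\delta < 1/7$ is spent. Consequently $c_\gamma$ is invertible, $c_\gamma^* c_\gamma$ is positive and invertible with $\norm{c_\gamma^* c_\gamma - 1_A} < 1$, and I can define $\bpi(\gamma) := c_\gamma (c_\gamma^* c_\gamma)^{-1/2}$, the unitary part of $c_\gamma$.

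Then I would verify the five properties. Property (1) holds since $c_e = 1_A$, and property (2) is the construction. For property (3) the key algebraic fact is that the unitary part of $x^*$ is the adjoint of the unitary part of $x$ — if $x = w\lvert x\rvert$ then $x^* = w^*\,\bigl(w\lvert x\rvert w^*\bigr)$ with $w\lvert x\rvert w^* \ge 0$ — so applying this to $x = c_\gamma$ and using $c_{\gamma^{-1}} = c_\gamma^*$ gives $\bpi(\gamma^{-1}) = \bpi(\gamma)^*$; note this automatically covers the case $\gamma = \gamma^{-1}$ without any special argument. Property (5) follows by estimating $\norm{\bpi(\gamma) - \pi(\gamma)} \le \norm{c_\gamma - \pi(\gamma)} + \norm{c_\gamma}\,\norm{(c_\gamma^* c_\gamma)^{-1/2} - 1_A}$, where the first term is bounded via the preliminary estimate and the second via the spectral bound on $c_\gamma^* c_\gamma$; a routine computation with $\delta < 1/7$ gives the (far from tight) bound $20\delta$. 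Property (4) is then a triangle-inequality consequence of (5) and quasi-multiplicativity: for $\gamma,\gamma',\gamma\gamma' \in \mathcal{F}_\Lambda$ one inserts $\pi(\gamma)\pi(\gamma')$ and uses $\norm{\bpi(\gamma)} = 1$ and $\norm{\pi(\gamma')} < 1+\delta$ to get a bound below $70\delta$.

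The main obstacle is property (3): a bare polar decomposition of $\pi(\gamma)$ need not respect inverses, so the symmetrization $c_\gamma$ is essential, and the only delicate point is then that $c_\gamma$ remains invertible — which is precisely what the smallness hypothesis $\delta < 1/7$ guarantees (forcing $\norm{c_\gamma - u} < \sqrt{2}-1$ and hence $\norm{c_\gamma^* c_\gamma - 1_A} < 1$). The rest is bookkeeping with norm estimates, and the constants $20\delta$ and $70\delta$ leave plenty of room.
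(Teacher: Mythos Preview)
Your proposal is correct and follows essentially the same approach as the paper: define $\bpi$ as the unitary polar part of the symmetrization $c_\gamma = \tfrac12(\pi(\gamma)+\pi(\gamma^{-1})^*)$, use the algebraic identity $\omega(z^*)=\omega(z)^*$ for property~(3), bound $\|\bpi(\gamma)-\pi(\gamma)\|$ via $\|c_\gamma-\pi(\gamma)\|$ and a spectral estimate on $(c_\gamma^*c_\gamma)^{-1/2}-1$, and derive~(4) from~(5) by a triangle-inequality argument. The only cosmetic differences are your explicit treatment of $\bpi$ outside $\mathcal{F}_\Lambda$ and slightly different packaging of the intermediate numerical bounds.
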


The next lemma will be used in the proof.

\begin{lemma}
  \label{lem:polar-decomp-estimate}
  Let $\omega\colon \GL(A)\to \U(A)$ be given by $\omega(v) =
  v(v^*v)^{-1/2}$.  If $v\in \U(A)_\delta$, $0<\delta < 1/7$, then
  $\|\omega(v)-v\|<5\delta$.
\end{lemma}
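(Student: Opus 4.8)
The plan is to bound the distance from $v$ to its unitary part $\omega(v) = v(v^*v)^{-1/2}$ by controlling how far the positive element $v^*v$ is from $1_A$. First I would fix a unitary $u \in \U(A)$ with $\|v - u\| < \delta$; this exists because $v \in \U(A)_\delta$ and the distance in the definition is a strict infimum (or, if one prefers, work with $\|v-u\|\le\delta$ after a harmless relabeling of the constant). Then $\|v\| < 1 + \delta$ and, writing $v^*v - 1_A = v^*v - u^*u = v^*(v-u) + (v^*-u^*)u$, I get $\|v^*v - 1_A\| \le \|v\|\,\|v-u\| + \|v-u\|\,\|u\| < (2 + \delta)\delta$. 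So $v^*v$ is a positive invertible element within roughly $2\delta$ of the identity.

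Next I would estimate $\|(v^*v)^{-1/2} - 1_A\|$ using functional calculus. With $a := v^*v$, the spectrum of $a$ lies in the interval $[1 - (2+\delta)\delta,\ 1 + (2+\delta)\delta]$, and on this interval the function $t \mapsto t^{-1/2} - 1$ has absolute value at most $\big(1 - (2+\delta)\delta\big)^{-1/2} - 1$ (the maximum is attained at the left endpoint, since $t^{-1/2}-1$ is decreasing). For $\delta < 1/7$ one has $(2+\delta)\delta < 1/3$, so $1 - (2+\delta)\delta > 2/3$ and a crude bound such as $(1-x)^{-1/2} - 1 \le 2x$ for $x \in [0, 1/3]$ — which follows e.g. from $(1-x)^{-1/2} \le 1/(1-x) \le 1 + 2x$ on that range — gives $\|(v^*v)^{-1/2} - 1_A\| \le 2(2+\delta)\delta < 2\cdot\frac{15}{7}\delta < \frac{30}{7}\delta$. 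One can tighten the arithmetic to whatever is needed; the point is that this quantity is $O(\delta)$ with a small explicit constant.

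Finally I would assemble the pieces: $\omega(v) - v = v\big((v^*v)^{-1/2} - 1_A\big)$, so
\[
\|\omega(v) - v\| \le \|v\|\,\big\|(v^*v)^{-1/2} - 1_A\big\|
< (1+\delta)\cdot 2(2+\delta)\delta,
\]
and for $0 < \delta < 1/7$ the right-hand side is bounded by $5\delta$ (numerically $(1+\tfrac17)(2+\tfrac17)\cdot 2 = \tfrac87\cdot\tfrac{15}{7}\cdot 2 = \tfrac{240}{49} < 5$, so the inequality is strict with room to spare, and monotonicity in $\delta$ handles smaller values). I do not expect any genuine obstacle here — everything is elementary functional calculus on a positive element near the identity; the only mild care needed is to route all estimates through a fixed nearby unitary $u$ rather than through $\|v\|$ alone, and to keep track of the endpoints of the spectrum of $v^*v$ so that the constant $5$ actually comes out. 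If a cleaner constant is desired one could instead invoke a standard lemma that $\|a^{-1/2}-1\|\le \|a-1\|$ for positive invertible $a$ with $\|a-1\|<1$, which would immediately give $\|\omega(v)-v\| < (1+\delta)(2+\delta)\delta < 5\delta$.
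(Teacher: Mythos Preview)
Your proof is correct and follows essentially the same approach as the paper: pick a nearby unitary $u$, bound $\|v^*v-1_A\|$ by $(2+\delta)\delta$, pass to a bound on $\|(v^*v)^{-1/2}-1_A\|$ of order $2(2+\delta)\delta$, and multiply by $\|v\|<1+\delta$. The paper's only cosmetic difference is that it first substitutes $w=vu^*$ (noting $\omega(w)=\omega(v)u^*$ so that $\|\omega(w)-w\|=\|\omega(v)-v\|$) and then runs the same estimates on $w^*w$; the numerical bounds and the final inequality $(1+\delta)\cdot\tfrac{30}{7}\delta<5\delta$ are identical to yours.
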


\begin{proof}
  Observe that for $z\in A$
  \begin{equation}
    \label{eq:4}
    \| z - 1 \| \leq \theta < 1/2 \quad\Rightarrow\quad
    \| z^{-1} - 1 \| < 2\theta,
  \end{equation}
  using the Neumann series.

  Assume $v\in \U(A)_\delta$.  Then there is $u\in \U(A)$ such that
  $\|v - u\| < \delta$, so $\| vu^* - 1 \| < \delta$.  Let $w =
  vu^*$.  Then
  \[
    \omega(w) = vu^*(uv^*vu^*)^{-1/2}
    = vu^*u(v^*v)^{-1/2}u^*  = \omega(v) u^*
  \]
  and thus $\|\omega(w) - w\| = \| \omega(v) - v\|$.

  Now $\|w\| < 1 + \delta$, so $\| w^*w - 1 \| \leq \|(w^*-1) w\| +
  \|w - 1\| < \delta(1+\delta) + \delta < 15\delta/7$.  Therefore, by
  \eqref{eq:4}, $\| (w^*w)^{-1/2} - 1 \|\leq \| (w^*w)^{-1} - 1 \| <
  30\delta/7$.  Finally,
  \[
  \| \omega(v) - v\|
  = \| w(w^*w)^{-1/2} - w \| < (1+\delta)30\delta/7 < 5\delta.
  \qedhere
  \]
\end{proof}

\begin{proof}[Proof of Proposition~\ref{prop:perturb-qrep}]
  The idea is simple: define $\bpi = \omega \circ \sigma$ where
  $\omega$ is as in Lemma~\ref{lem:polar-decomp-estimate} and
  $\sigma\colon \Gamma\to A$ is the function
  \[
  \sigma(\gamma) =
  \frac{\pi(\gamma) + \pi(\gamma^{-1})^*}{2}.
  \]
  We check the required properties.

  Let $\gamma\in \mathcal{F}_\Lambda$.  First we prove that $\|
  \pi(\gamma^{-1}) - \pi(\gamma)^{-1} \| < 2\delta$ and $\|
  \pi(\gamma)^{-1} - \pi(\gamma)^* \| < 3\delta$.

  Because $\pi(\gamma) \in \U(A)_\delta$, we can write $\pi(\gamma) =
  uv$ for some $u\in \U(A)$ and $v\in \GL(A)$ with $\|v-1 \| <
  \delta$. Then $\| \pi(\gamma)^{-1}-u^* \|=\| v^{-1}-1 \|<2\delta$ by
  \eqref{eq:4} and hence $\| \pi(\gamma)^{-1} \| < 1 +2\delta$.
  Therefore,
  \begin{align*}
    \| \pi(\gamma^{-1}) - \pi(\gamma)^{-1} \|
    &=
    \| \left(\pi(\gamma^{-1})\pi(\gamma) - 1\right)\pi(\gamma)^{-1} \|\\
    &\leq
    \| \pi(\gamma^{-1})\pi(\gamma) - \pi(\gamma^{-1}\gamma) \|
    \| \pi(\gamma)^{-1} \|\\
    &\leq
    \delta(1 + 2\delta) < 2\delta.
  \end{align*}
  It is just as plain to see that
  \[
  \|
  \pi(\gamma)^{-1} - \pi(\gamma)^* \|
  =
  \|(uv)^{-1}-(uv)^*\|
  =
  \|v^{-1}-v^*\|
  \leq
  \|v^{-1}-1\|+\|v^*-1\|
  <3\delta,
  \]
  as claimed.  Using these bounds we see that
  \begin{align}
    \begin{split}
      \label{eq:5}
      \| \sigma(\gamma) - \pi(\gamma) \| &=
      \frac{1}{2} \| \pi(\gamma^{-1})^* - \pi(\gamma) \|
      =
      \frac{1}{2} \| \pi(\gamma^{-1}) - \pi(\gamma)^* \|\\
      &\leq \frac{1}{2} \| \pi(\gamma^{-1}) - \pi(\gamma)^{-1} \| +
      \frac{1}{2} \| \pi(\gamma)^{-1} - \pi(\gamma)^* \|\\
      &< 5\delta/2.
    \end{split}
  \end{align}
 Thus
 \begin{equation}
   \label{eq:6}
    \dist( \sigma(\gamma), \U(A) )
    < 5\delta/2 + \dist( \pi(\gamma), \U(A) )
    < 7\delta/2<1/2.
  \end{equation}

  In particular $\sigma(\gamma)\in \GL(A)$.  Items (1) and (2) in the
  statement of the proposition are immediate.  For (3) observe that if
  $z\in \GL(A)$, then $\omega(z^*) = \omega(z)^*$.  It follows that
  $\bpi(\gamma^{-1}) = \omega( \sigma(\gamma^{-1}) ) = \omega(
  \sigma(\gamma)^* ) = \omega( \sigma(\gamma) )^*=\bpi( \gamma )^*$.

  We deal with (5).  From \eqref{eq:6} and
  Lemma~\ref{lem:polar-decomp-estimate} we obtain $\| \bpi(\gamma) -
  \sigma(\gamma) \| < 35\delta/2$.  Together with \eqref{eq:5} this
  gives
  \[
  \| \bpi(\gamma) - \pi(\gamma) \|
  \leq
  \| \bpi(\gamma) - \sigma(\gamma) \| + \| \sigma(\gamma) -
  \pi(\gamma) \| < 20\delta.
  \]

  We are left with (4).  Suppose $\gamma,\gamma' \in
  \mathcal{F}_\Lambda$ are such that $\gamma\gamma' \in
  \mathcal{F}_\Lambda$.  Then
  \begin{align*}
    \| \bpi(\gamma)\bpi(\gamma') - \bpi(\gamma\gamma') \|
    &\leq
    \| \bpi(\gamma) - \pi(\gamma) \| \| \bpi(\gamma') \|
    + \| \pi(\gamma) \| \| \bpi(\gamma') - \pi(\gamma') \|
    +\mbox{}\\
    &\quad
    + \| \pi(\gamma)\pi(\gamma') - \pi(\gamma\gamma') \|
    + \| \bpi(\gamma\gamma') - \pi(\gamma\gamma') \|\\
    &<
    20\delta + (1+\delta)20\delta + \delta +20\delta\\
    &<
    70\delta.\qedhere
  \end{align*}
\end{proof}

The next proposition allows us to use induction on the number of
vertices of $\sigma$ in the proofs that follow.

\begin{proposition}
  \label{Prop_restriction}
  If $\sigma\subset\widetilde{\sigma}$ are simplices of $\Lambda$ and
  $i<j$ are vertices of $\sigma$, then the restriction of
  $v_{ij}^{\widetilde{\sigma}}$ to $c_{ij}^\sigma$ is equal to
  $v_{ij}^{\sigma}$.
\end{proposition}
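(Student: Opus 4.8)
The plan is to reduce at once to the case $i<j$ (the case $i>j$ is obtained by taking pointwise inverses of $v_{ji}^{\sigma}$ and $v_{ji}^{\widetilde{\sigma}}$, and $i=j$ does not arise), and then simply to substitute a point of $c_{ij}^{\sigma}$ into the formula defining $v_{ij}^{\widetilde{\sigma}}$ and watch it collapse to the formula for $v_{ij}^{\sigma}$. First I would record a few elementary facts. For a simplex $\tau$ containing $i$ and $j$, write $[i,j]_\tau$ for the set of vertices $k$ of $\tau$ with $i\le k\le j$ in \textbf{o}-order, so that the sum defining $v_{ij}^{\tau}$ runs over subsets $I\subseteq[i,j]_\tau$ with $i,j\in I$. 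Since the order \textbf{o} restricts from $\widetilde{\sigma}$ to $\sigma$, we have $[i,j]_\sigma=[i,j]_{\widetilde{\sigma}}\cap\sigma^{(0)}$, and the two notions of ``ascending path from $i$ to $j$ contained in $\sigma$'' coincide; also $u_I$ depends only on the ordered subset $I$, not on the ambient simplex. Next, $c_{ij}^{\sigma}\subseteq c_{ij}^{\widetilde{\sigma}}$: a point of $c_i^{\sigma}$, viewed in $\widetilde{\sigma}$, has barycentric coordinate $t_l=0$ for every $l\in\widetilde{\sigma}^{(0)}\setminus\sigma^{(0)}$, so the inequalities $t_i\ge t_l$ defining $c_i^{\widetilde{\sigma}}$ hold automatically for such $l$ and by hypothesis for $l\in\sigma^{(0)}$; intersecting with the same statement for $j$ gives the inclusion, so the restriction in the statement makes sense. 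Finally, on $c_{ij}^{\sigma}$ one has $t_i=t_j>0$, so the modified barycentric coordinates $s_l=t_l/t_i$ relative to $\widetilde{\sigma}$ are defined, with $s_l=0$ for $l\in\widetilde{\sigma}^{(0)}\setminus\sigma^{(0)}$ and with $(s_l)_{l\in\sigma^{(0)}}$ equal to the modified barycentric coordinates relative to $\sigma$.

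Then I would expand $v_{ij}^{\widetilde{\sigma}}(\mathbf{s})=\sum_{I'}\lambda_{I'}^{\widetilde{\sigma}}(\mathbf{s})\,u_{I'}$ for $\mathbf{s}\in c_{ij}^{\sigma}$, the sum over $I'\subseteq[i,j]_{\widetilde{\sigma}}$ with $i,j\in I'$. If $I'$ contains some $l\in\widetilde{\sigma}^{(0)}\setminus\sigma^{(0)}$ (necessarily $i<l<j$), then $\lambda_{I'}^{\widetilde{\sigma}}(\mathbf{s})$ carries the factor $s_l'=s_l=0$, so that term vanishes; hence only $I'\subseteq[i,j]_{\widetilde{\sigma}}\cap\sigma^{(0)}=[i,j]_\sigma$ survive. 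For such an $I'$, every $k\in[i,j]_{\widetilde{\sigma}}\setminus\sigma^{(0)}$ lies outside $I'$ and therefore contributes a factor $s_k'=1-s_k=1$ to $\lambda_{I'}^{\widetilde{\sigma}}(\mathbf{s})$, so $\lambda_{I'}^{\widetilde{\sigma}}(\mathbf{s})=\prod_{k\in[i,j]_\sigma}s_k'=\lambda_{I'}^{\sigma}(\mathbf{s})$. Combining this with the facts that the surviving index set is exactly the index set for $v_{ij}^{\sigma}$ and that $u_{I'}$ is the same element in either simplex, the two expansions agree term by term, and thus $v_{ij}^{\widetilde{\sigma}}(\mathbf{s})=v_{ij}^{\sigma}(\mathbf{s})$ for all $\mathbf{s}\in c_{ij}^{\sigma}$.

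I do not expect a genuine obstacle here; the argument is bookkeeping with the definition in Definition~\ref{def:cocycle}. The one point that must be handled with care is the double role played by the coordinate factor attached to a ``new'' vertex $l$ of $\widetilde{\sigma}$: on $c_{ij}^{\sigma}$ one has $s_l=0$, and this simultaneously annihilates every summand $I'$ that would contain $l$ (through the factor $s_l$) and contributes the harmless factor $1-0=1$ to every summand $I'$ that omits $l$ (through the factor $1-s_l$) — so that both the index set of the sum and the weights $\lambda_I$ are unchanged on passing from $\widetilde{\sigma}$ to $\sigma$. Keeping straight that $[i,j]_\sigma=[i,j]_{\widetilde{\sigma}}\cap\sigma^{(0)}$ (which uses only that the vertices of any simplex are totally \textbf{o}-ordered and that \textbf{o} is a fixed partial order on all of $\Lambda^{(0)}$) is the only place where the structure of $\Lambda$, rather than pure algebra, enters.
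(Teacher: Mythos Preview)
Your proof is correct and follows essentially the same approach as the paper: both arguments observe that on $c_{ij}^{\sigma}$ the modified coordinate $s_l$ vanishes for every vertex $l\in\widetilde{\sigma}^{(0)}\setminus\sigma^{(0)}$, so that any path $I'$ passing through such an $l$ contributes $0$ (via the factor $s_l$), while the remaining paths pick up only harmless factors $1-s_l=1$. The only cosmetic difference is that the paper first reduces to the case where $\widetilde{\sigma}$ has exactly one more vertex than $\sigma$, whereas you treat all new vertices simultaneously; your direct argument is just as clean.
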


\begin{proof}
  We may assume that $\widetilde{\sigma}=\sigma \cup \{{l}\}$ is a
  simplex of $\Lambda$ that has $\sigma$ as one of its faces and $l
  \notin \sigma$.  If $\mathbf{s}=(s_0, \dots, s_l, \dots, s_r) \in
  c_{ij}^{\widetilde{\sigma}}$, then $s_i=s_j=1$ and moreover
  $\mathbf{s}\in c_{ij}^{\sigma}$ precisely when $s_l=0$.  Let $I$ be
  a subset of $\{ i, \dots, j\}$ that contains both $i$ and $j$ as
  above.  If either $l<i $ or $j<l$, then $\sigma$ and
  $\widetilde{\sigma}$ have exactly the same set of increasing paths
  from $i$ to $j$ and hence
  $v_{ij}^{\widetilde{\sigma}}(\mathbf{s})=v_{ij}^{\sigma}(\mathbf{s})$
  for $\mathbf{s}\in c_{ij}^{\sigma}$ by Definition~\ref{def:cocycle}.
  (In fact, $v_{ij}^{\widetilde{\sigma}}(s_0, \dots, s_l, \dots, s_r)
  = v_{ij}^{\sigma}(s_0, \dots, s_l=0, \dots, s_r)$, again by
  Definition~5.3.)

  Suppose now that $i<l<j$. Let $I = \{ i=i_0<i_1 < i_2 < \cdots <
  i_m=j\}$ be an increasing path in $\widetilde{\sigma}$. If $l \notin
  I$ and $\mathbf{s}\in c_{ij}^\sigma=\{\mathbf{s}\in
  c_{ij}^{\widetilde{\sigma}}\colon s_l=0\},$ then
  $\lambda_I^{\widetilde{\sigma}}(\mathbf{s})u_I=
  (1-s_l)\lambda_I^{\sigma}(\mathbf{s})u_I=\lambda_I^{\sigma}(\mathbf{s})u_I.$
  On the other hand if $l\in I$, then
  $\lambda_I^{\widetilde{\sigma}}(\mathbf{s})=0$ since $s_l$ is one of
  its factors. The statement follows now immediately from by
  Definition~\ref{def:cocycle} since
  \[
  v_{ij}^{\widetilde{\sigma}}(\mathbf{s}) :=
  \sum_{ l\in I}\, \lambda^{\widetilde{\sigma}}_I(\mathbf{s})
  u_I+\sum_{ l\notin I}\,
  \lambda^{\widetilde{\sigma}}_I(\mathbf{s})u_I.\qedhere
  \]
\end{proof}

\begin{proposition}\label{Prop_cocycle}
  If $i<l<j$ are vertices of a simplex $\sigma$ of $\Lambda$, then
  $v^\sigma_{ij}(\mathbf{s})=v_{il}^{\sigma}(\mathbf{s})
  v_{lj}^{\sigma}(\mathbf{s})$ for all $\mathbf{s}\in
  c^\sigma_{ilj}=c^\sigma_{i}\cap c^\sigma_{l}\cap c^\sigma_{j}$.
\end{proposition}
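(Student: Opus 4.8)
The plan is to verify the cocycle identity $v_{ij}^\sigma = v_{il}^\sigma v_{lj}^\sigma$ on $c_{ilj}^\sigma$ by expanding both sides in modified barycentric coordinates and matching the coefficients of each product $u_K$. On a point $\mathbf{s}\in c_{ilj}^\sigma$ we have $s_i = s_l = s_j = 1$, since the point lies simultaneously in $c_i^\sigma$, $c_l^\sigma$ and $c_j^\sigma$. First I would write out the three functions explicitly. Every increasing path $I$ from $i$ to $j$ inside $\sigma$ either passes through $l$ or not; if $i<l<j$ then the paths through $l$ are exactly the concatenations $I = I_1 \cup I_2$ of an increasing path $I_1$ from $i$ to $l$ with an increasing path $I_2$ from $l$ to $j$, and in that case $u_I = u_{I_1}u_{I_2}$ while (crucially) $\lambda_I^\sigma(\mathbf{s}) = \lambda_{I_1}^\sigma(\mathbf{s})\,\lambda_{I_2}^\sigma(\mathbf{s})$ because $s_l = 1$ makes the factor $s_l'$ (namely $s_l$, since $l\in I$) equal to $1$, so the product over $i\le k\le j$ splits cleanly into the product over $i\le k\le l$ and the product over $l\le k\le j$. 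A path $I\not\ni l$ from $i$ to $j$ contributes a factor $s_l' = 1-s_l = 0$ to $\lambda_I^\sigma(\mathbf{s})$, so it vanishes on $c_{ilj}^\sigma$. Hence on $c_{ilj}^\sigma$,
\[
v_{ij}^\sigma(\mathbf{s}) = \sum_{l\in I} \lambda_I^\sigma(\mathbf{s})\, u_I
= \sum_{I_1, I_2} \lambda_{I_1}^\sigma(\mathbf{s})\lambda_{I_2}^\sigma(\mathbf{s})\, u_{I_1}u_{I_2},
\]
the sum running over increasing paths $I_1$ from $i$ to $l$ and $I_2$ from $l$ to $j$ in $\sigma$.

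Next I would compute the right-hand side. Restricting $\mathbf{s}\in c_{ilj}^\sigma$, the point lies in $c_{il}^\sigma$ and in $c_{lj}^\sigma$, so $v_{il}^\sigma(\mathbf{s}) = \sum_{I_1} \lambda_{I_1}^\sigma(\mathbf{s})\,u_{I_1}$ and $v_{lj}^\sigma(\mathbf{s}) = \sum_{I_2} \lambda_{I_2}^\sigma(\mathbf{s})\,u_{I_2}$, where — and this is the one point requiring a small check — the coefficient $\lambda_{I_1}^\sigma(\mathbf{s})$ appearing in $v_{il}^\sigma$ is the product over $i\le k\le l$ of the $s_k'$, which is exactly the same as the factor appearing above in the split of $\lambda_I^\sigma$; similarly for $\lambda_{I_2}^\sigma$. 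Multiplying the two sums,
\[
v_{il}^\sigma(\mathbf{s})\,v_{lj}^\sigma(\mathbf{s})
= \sum_{I_1}\sum_{I_2} \lambda_{I_1}^\sigma(\mathbf{s})\lambda_{I_2}^\sigma(\mathbf{s})\, u_{I_1}u_{I_2},
\]
which is term-by-term equal to the expression for $v_{ij}^\sigma(\mathbf{s})$ obtained above.

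I would organize the bookkeeping using Proposition~\ref{Prop_restriction}, which lets me assume $\sigma$ is as small as possible — I only need the vertices of $\sigma$ lying in $[i,j]$ in the $\mathbf o$-order to matter, the coordinates $s_k$ for $k<i$ or $k>j$ being irrelevant to all three functions — so there is no loss in treating $\sigma$ with vertex set exactly $\{i,\dots,j\}$. The genuinely delicate step is the factorization $\lambda_I^\sigma(\mathbf{s}) = \lambda_{I_1}^\sigma(\mathbf{s})\lambda_{I_2}^\sigma(\mathbf{s})$ and the matching of these partial products with the coefficients of $v_{il}^\sigma$ and $v_{lj}^\sigma$: one must be careful that the "middle" index $k=l$ is counted once, not twice (it contributes $s_l' = s_l = 1$ on $c_{ilj}^\sigma$, so double-counting would still be harmless here, but the identity of index ranges should be stated precisely). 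Once that combinatorial identity is in place, everything else is a rearrangement of a finite sum, and the range condition $v^\sigma_{ij}(\mathbf s)\in\GL(A)$ plays no role in the argument (it is established separately). I expect the whole proof to be short, with the indexing of paths and the product-splitting being the only thing that needs care.
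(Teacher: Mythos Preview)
Your proposal is correct and follows essentially the same argument as the paper: split the sum defining $v_{ij}^\sigma(\mathbf s)$ into paths that contain $l$ and paths that do not, observe that the latter contribute zero since $1-s_l=0$ on $c_{ilj}^\sigma$, and factor each remaining term as $\lambda_I^\sigma(\mathbf s)u_I=\lambda_{I_1}^\sigma(\mathbf s)\lambda_{I_2}^\sigma(\mathbf s)\,u_{I_1}u_{I_2}$ (the double count of the $s_l$ factor being harmless because $s_l=1$). The appeal to Proposition~\ref{Prop_restriction} is not needed---the coordinates $s_k$ with $k<i$ or $k>j$ simply do not appear in any of the three functions---but it does no harm.
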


\begin{proof}
  Let $I = \{ i=i_0<i_1 < i_2 < \cdots < i_m=j\}$ be an increasing
  path in ${\sigma}$.

  If $l \notin I$, then $\lambda_I^{\sigma}(\mathbf{s})=0$ since
  $1-s_l=0$ is one of its factors.  On the other hand, if $l\in I$,
  say
  \[
  I = \{ i=i_0<\dots<i_{k-1} < i_k=l<i_{k+1}<\cdots < i_m=j\},
  \]
  then letting
  \[
  I'=\{ i_0<\dots<i_{k-1} < i_k\}
  \quad\text{and}\quad
  I''=\{i_k<i_{k+1}<\dots < i_m\}
  \]
  we see that
  \[
  \lambda_I^{\sigma}( \mathbf{s} )u_I
  =
  \lambda_{I'}^{\sigma}(\mathbf{s})s_l
  \lambda_{I''}^{\sigma}(\mathbf{s})u_{I'}u_{I''}
  =
  \lambda_{I'}^{\sigma}(\mathbf{s})u_{I'}
  \cdot \lambda_{I''}^{\sigma}(\mathbf{s})u_{I''}.
  \]
  The statement now follows from Definition~\ref{def:cocycle} since
  \[
  v_{ij}^{\sigma}(\mathbf{s}) := \sum_{ l\in I}\,
  \lambda^{\sigma}_I(\mathbf{s}) u_I+\sum_{ l\notin I}\,
  \lambda^{\sigma}_I(\mathbf{s})u_I.\qedhere
  \]
\end{proof}

\begin{corollary}
  \label{cor:cocycle-well-defined}
  The family of functions $\{ v_{ij}^\sigma \mid i,j\in \sigma\}$
  yields a continous function $v_{ij}\colon c_{ij} = \bigcup_\sigma
  c_{ij}^\sigma\to \GL(A)$ such that
  $v_{il}(\mathbf{s})v_{lj}(\mathbf{s})=v_{ij}(\mathbf{s})$ for all
  $\mathbf{s}\in c_{ilj}= c_i\cap c_l\cap c_j$.
\end{corollary}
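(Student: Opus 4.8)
The plan is to assemble Corollary~\ref{cor:cocycle-well-defined} from the two preceding propositions by a bookkeeping argument on dual cell blocks, with the only genuine subtlety being the continuity claim across the boundaries between adjacent blocks. First I would fix an edge $\simplex{i,j}\in\Lambda^{(1)}$; by symmetry (Definition~\ref{def:cocycle}(3), which sets $v_{ij}^\sigma$ to be the pointwise inverse of $v_{ji}^\sigma$ when $i>j$) it suffices to treat the case $i<j$. The dual cell $c_{ij}$ is by definition the union $\bigcup_\sigma c_{ij}^\sigma$ over simplices $\sigma\supseteq\{i,j\}$, and on each such block $v_{ij}^\sigma$ is a polynomial (hence continuous) function of the modified barycentric coordinates, with values in $A$; I will address membership in $\GL(A)$ below. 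So the content is: (i) these block-wise definitions agree on overlaps $c_{ij}^\sigma\cap c_{ij}^{\widetilde\sigma}$, so that they glue to a well-defined function $v_{ij}$ on $c_{ij}$; (ii) $v_{ij}$ is continuous on $c_{ij}$; (iii) the cocycle identity $v_{il}(\mathbf{s})v_{lj}(\mathbf{s})=v_{ij}(\mathbf{s})$ holds on the triple intersection $c_{ilj}$.

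For (i) and (ii): two blocks $c_{ij}^\sigma$ and $c_{ij}^{\widetilde\sigma}$ (with $\{i,j\}\subseteq\sigma\cap\widetilde\sigma$) meet in $c_{ij}^{\sigma\cap\widetilde\sigma}$, which is again a dual cell block, for the common face $\sigma\cap\widetilde\sigma$. Applying Proposition~\ref{Prop_restriction} twice — once to $\sigma\cap\widetilde\sigma\subset\sigma$ and once to $\sigma\cap\widetilde\sigma\subset\widetilde\sigma$ — gives that both $v_{ij}^\sigma$ and $v_{ij}^{\widetilde\sigma}$ restrict on $c_{ij}^{\sigma\cap\widetilde\sigma}$ to $v_{ij}^{\sigma\cap\widetilde\sigma}$, so they agree there. (Strictly, Proposition~\ref{Prop_restriction} is stated for $\widetilde\sigma=\sigma\cup\{l\}$ with one extra vertex; iterating along a chain of faces from $\sigma\cap\widetilde\sigma$ up to $\sigma$ covers the general case.) Hence the $v_{ij}^\sigma$ agree on all pairwise overlaps and patch to a function $v_{ij}\colon c_{ij}\to A$; since $c_{ij}$ is the finite union of the closed blocks $c_{ij}^\sigma$, and $v_{ij}$ is continuous on each with matching values on overlaps, $v_{ij}$ is continuous on $c_{ij}$ by the pasting lemma.

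For the cocycle identity (iii): a point $\mathbf{s}\in c_{ilj}=c_i\cap c_l\cap c_j$ lies in some block $c_{ilj}^\sigma$ with $\{i,l,j\}\subseteq\sigma$. There are three positions for $l$ relative to $i<j$ in the \textbf{o}-order. If $i<l<j$, Proposition~\ref{Prop_cocycle} directly gives $v_{ij}^\sigma(\mathbf{s})=v_{il}^\sigma(\mathbf{s})v_{lj}^\sigma(\mathbf{s})$, and then each $v_{\cdot\cdot}^\sigma(\mathbf{s})$ equals the glued value $v_{\cdot\cdot}(\mathbf{s})$ by (i). If $l<i$ or $l>j$, I note that on $c_{ilj}^\sigma$ the coordinate $s_l=1$ (since $\mathbf{s}\in c_l^\sigma$ forces $s_l\geq s_k$ for all $k$, and in modified coordinates the relevant maximal coordinate is normalized to $1$), and I would observe that the interpolation formula of Definition~\ref{def:cocycle}, restricted to $s_l=1$ for a vertex outside the interval $[i,j]$, is unaffected by $l$ — equivalently, one reduces via Proposition~\ref{Prop_restriction} to the face $\sigma\setminus\{l\}$ — so the needed identity again reduces to Proposition~\ref{Prop_cocycle} (or is trivial if $\{i,l,j\}$ spans an edge only). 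Finally, to see $v_{ij}$ actually lands in $\GL(A)$ rather than merely $A$: on the barycenter $\bary{i,j}$ one has $v_{ij}^\sigma(\bary{i,j})=u_{ij}\in\U(A)$ (Remark~\ref{rem:bundle-constr}), and the estimates underlying Proposition~\ref{prop:almost-flat-existence} — namely that $v_{ij}^\sigma$ is a convex combination of products $u_I$ of the unitaries $u_{i_k i_{k+1}}=\bpi(\gamma_{i_k i_{k+1}})$, which are within $O(L\delta)$ of unitaries by Lemma~\ref{lem:U-epsilon-prod} and Proposition~\ref{prop:perturb-qrep} — show $v_{ij}(\mathbf{s})\in\U(A)_{c\delta}\subseteq\GL(A)$ for $\delta$ small; I would invoke this rather than reprove it, since the sharp constants belong to the proof of Proposition~\ref{prop:almost-flat-existence}.

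The main obstacle is the continuity/gluing step (i)–(ii): one has to be careful that ``$c_{ij}^\sigma\cap c_{ij}^{\widetilde\sigma}=c_{ij}^{\sigma\cap\widetilde\sigma}$'' and that Proposition~\ref{Prop_restriction}, stated for a single added vertex, iterates correctly along arbitrary face chains; once that is pinned down, everything else is immediate from Propositions~\ref{Prop_restriction} and~\ref{Prop_cocycle}.
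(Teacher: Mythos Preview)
Your approach matches the paper's: glue via Proposition~\ref{Prop_restriction}, get the cocycle identity from Proposition~\ref{Prop_cocycle}, and handle $\GL(A)$-membership by the convex-combination estimate $\|v_{ij}^\sigma(\mathbf{s})-u_{ij}\|<70L\delta<1/2$ (the paper actually proves this estimate \emph{inside} the proof of this corollary and then cites it in the proof of Proposition~\ref{prop:almost-flat-existence}, so your deferral is slightly circular, but the content is right).

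There is one genuine wrinkle in your step (iii). When $l<i$ or $l>j$, your stated reduction does not work: observing that $v_{ij}^\sigma$ is independent of $s_l$ says nothing about $v_{il}$ or $v_{lj}$, and you cannot ``reduce via Proposition~\ref{Prop_restriction} to $\sigma\setminus\{l\}$'' since then $l$ is no longer a vertex and $v_{il}^{\sigma\setminus\{l\}}$ is undefined. The correct (and shorter) argument uses the inverse convention of Definition~\ref{def:cocycle}(3) together with Proposition~\ref{Prop_cocycle} applied with the indices in increasing \textbf{o}-order. For instance, if $l<i<j$ and $\mathbf{s}\in c_{ilj}^\sigma$, then Proposition~\ref{Prop_cocycle} gives $v_{lj}^\sigma(\mathbf{s})=v_{li}^\sigma(\mathbf{s})v_{ij}^\sigma(\mathbf{s})$, so
\[
v_{il}(\mathbf{s})v_{lj}(\mathbf{s})=\bigl(v_{li}^\sigma(\mathbf{s})\bigr)^{-1}v_{li}^\sigma(\mathbf{s})v_{ij}^\sigma(\mathbf{s})=v_{ij}(\mathbf{s}),
\]
and the case $i<j<l$ is analogous. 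The paper simply writes ``the cocycle condition follows from Proposition~\ref{Prop_cocycle},'' leaving this elementary reordering implicit.
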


\begin{proof}
  Proposition~\ref{Prop_restriction} shows that if two simplices
  $\sigma$ and $\sigma'$ contain $\{i,j\}$, then
  $v^{\sigma}_{ij}=v^{\sigma'}_{ij}=v^{\sigma\cap \sigma'}_{ij}$ on
  $c_{ij}^{\sigma \cap \sigma'}$, so that $v_{ij}$ is
  well-defined. The cocycle condition follows from Proposition
  ~\ref{Prop_cocycle}. It remains to show that $v_{ij}$ takes values
  in $\GL(A)$.  This will follow from the estimate
  \begin{equation}
    \label{eq:7}
    \| v_{ij}^\sigma(\mathbf{s}) - u_{ij} \|
    <
    70 L \delta < 1/2
  \end{equation}
  that we now verify.

  If $I = \{ i=i_1 < i_2 < i_3 < \dots < i_m=j\}$ and $ u_I :=u_{i_1
    i_2}u_{i_2 i_3} \dots u_{i_{m-1} i_m} $ are as in
  Definition~\ref{def:cocycle}, then show that $\| u_I - u_{ij} \| <
  70 m\delta$ by induction on $m$.  This is trivial if $m=2$, since in
  that case $u_I = u_{ij}$.  For the inductive step, we use the
  estimate $\|u_{i_k i_{k+1}}u_{i_{k+1} i_{k+2}}-u_{i_{k}
    i_{k+2}}\|<70\delta$ proved in
  Proposition~\ref{prop:perturb-qrep}(4).  Because
  $v_{ij}^\sigma(\mathbf{s}) := \sum_I\, \lambda^\sigma_I(\mathbf{s})
  u_I$, the estimate \eqref{eq:7} follows since $\sum_I\,
  \lambda^\sigma_I(\mathbf{s})=1$ for $\mathbf{s}\in c_{ij}^\sigma$.
\end{proof}

\begin{proof}[Proof of Proposition~\ref{prop:almost-flat-existence}]
  Let $\delta_0 = 1/140L$.

  Corollary~\ref{cor:cocycle-well-defined} all but implies
  Proposition~\ref{prop:almost-flat-existence}.  To complete the
  proof, we need to verify the almost flatness condition.  Assume $i <
  j$ are vertices as in the proof of
  Corollary~\ref{cor:cocycle-well-defined}.  We have seen that $\|
  v_{ij}(x) - u_{ij} \| < 70 L \delta$ for all $x \in c_{ij}$.  Since
  $u_{ij}$ is a unitary and since $0 < \delta < 1 / 140L$ by
  hypothesis, we can apply \eqref{eq:4} to see that
  \[
  \| v_{ji}(x) - u_{ji} \|
  =
  \| ( v_{ij}(x) )^{-1} - u_{ij}^{-1} \|
  \leq
  2\| v_{ij}(x) - u_{ij} \|
  < 140 L \delta.
  \]
  We conclude that $\beta(\pi) = \{ v_{ij} \}$ is $C_0''\delta$-flat
  where $C_0'' = 280 L \delta$, completing the proof of
  Proposition~\ref{prop:almost-flat-existence}.
\end{proof}

\section{Proofs of Theorems~\ref{thm:main-correspondence}
  and~\ref{thm:approx-inverse}}
\label{sec:proofs-main-results}

Most of the work needed to prove
Theorems~\ref{thm:main-correspondence} and~\ref{thm:approx-inverse}
was done in Sections~\ref{sec:bundle-to-qrep}
and~\ref{sec:qrep-to-bundle}.  What is left is basically bookkeeping
related the various constants defined so far, but it is somewhat
technical due to the nature of the definitions of $\alpha$ and
$\beta$.

\begin{proof}[Proof of Theorem~\ref{thm:main-correspondence}]
  The definitions of $\alpha$ and $\beta$ are given in
  Sections~\ref{sec:bundle-to-qrep} and \ref{sec:qrep-to-bundle}.
  Propositions~\ref{prop:qrep-existence} and
  \ref{prop:almost-flat-existence} show the existence of $\delta_0,
  \varepsilon_0 > 0$ and a constant $\max\{ C_0', C_0'' \}$ satisfying
  parts (1) and (2) of the theorem.  We will actually set $C_0 =
  \max\{ C_0', 2C_0'' + 40, 4^{L + 1}(K+1)\}$ where $K$ is provided by
  Lemma~\ref{lem:bound-on-R}.

  We prove (3).  Let $\pi = \alpha(\mathbf{v})$, $\pi' =
  \alpha(\mathbf{v'})$.  Let $\simplex{i, j}\in \Lambda^{(1)}$ be such
  that
  \[
  \| \pi(\gamma_{ij}) - \pi'(\gamma_{ij}) \|
  =
  \max_{\gamma \in \mathcal{F}_\Lambda}
  \| \pi(\gamma) - \pi'(\gamma) \|
  =
  d(\pi, \pi').
  \]
  As in Definition~\ref{def:quasi-rep-constr}, let $I = (i_0, \dots,
  i)$ be the unique path along $T$ from $i_0$ to $i$ and $J = (i_0,
  \dots, j)$ be the unique path from $i_0$ to $j$.  Then
  \begin{align*}
    d(\pi, \pi')
    &\leq
    \| \tilde{\pi}(s(\gamma_{ij})) - \tilde{\pi}(\simplex{i, j}) \|
    +
    \| \tilde{\pi}'(\simplex{i, j}) - \tilde{\pi}'(s(\gamma_{ij})) \|
    +\\
    &\quad\
    + \| \tilde{\pi}( \simplex{i, j} ) - \tilde{\pi}'( \simplex{i, j} ) \|
    \\
    &\leq
    \| \tilde{\pi}\big( s(\gamma_{ij}) \simplex{i, j}^{-1} \big)
    - 1\| \cdot \| \tilde{\pi}(\simplex{ i, j }) \|
    +\mbox{}\\
    &\quad\
    +
    \| \tilde{\pi}'\big( s(\gamma_{ij}) \simplex{ i, j }^{-1} \big)-1 \|
    \cdot \| \tilde{\pi}(\simplex{ i, j }) \|
    +
    \| \baryv{I} \baryv{ij} \baryv{J}^{-1} -
    \baryv{I}' \baryv{ij}' (\baryv{J}')^{-1} \|.
  \end{align*}
  From Lemma~\ref{lem:bound-on-R} we get that $\| \tilde{\pi}\big(
  s(\gamma_{ij}) \simplex{ i, j }^{-1} \big)-1 \| \leq K\varepsilon$,
  where $K > 0$ depends only on $\Lambda$, $T$, $i_0$, and $s$.  The
  same bound holds with $\tilde{\pi}'$ instead of $\tilde{\pi}$.
  Using this and the estimates $\|\baryv{kl}\|<1+\varepsilon$,
  $\|\baryv{kl}-\baryv{kl}'\|<d(\mathbf{v}, \mathbf{v'})$ for
  $\simplex{ k, l } \in \Lambda^{(1)}$, we see that
  \begin{displaymath}
   d(\pi, \pi') \leq 2\cdot K\varepsilon\cdot (1 + \varepsilon)^{2L+1}
    + (1 + \varepsilon)^{2L}d(\mathbf{v}, \mathbf{v'}).
  \end{displaymath}
  Since $(1 + \varepsilon)^{2L}<1+2^{2L}\varepsilon$ and
  $d(\mathbf{v}, \mathbf{v'})<2+2\varepsilon<4$ we have
  \[
  2 K\varepsilon(1 + \varepsilon)^{2L+1}+(1 +
  \varepsilon)^{2L}d(\mathbf{v},
  \mathbf{v'})<2^{2L+2}K\varepsilon+2^{2L+2}\varepsilon+d(\mathbf{v},
  \mathbf{v'}).
  \]
  Thus $d(\pi, \pi')< C_0\varepsilon+d(\mathbf{v}, \mathbf{v'})$.

  For part (4), recall that $d(\pi, \pi') = \max_{\gamma \in
    \mathcal{F}_\Lambda} \| \pi(\gamma) - \pi(\gamma) \|$.  Let
  $\mathbf{v} =\beta(\pi)$ and $\mathbf{v'} = \beta(\pi')$ (these are
  $C_0''\delta$-flat $\GL(A)$-coordinate bundles by
  Proposition~\ref{prop:almost-flat-existence}).  Recall that their
  definition (see Definition~\ref{def:cocycle}) makes use of the maps
  $\bpi$ and $\bpi'$ (given by Proposition~\ref{prop:perturb-qrep})
  respectively, and that $u_{ij} = \bpi( \gamma_{ij} ) = \baryv{ij}$
  etc.  For $\simplex{i, j} \in \Lambda^{(1)}$ and $x \in c_{ij}$ we
  estimate
  \begin{align*}
   \| v_{ij}(x) - v_{ij}'(x) \| &\leq
    \| v_{ij}(x) - u_{ij} \|
    + \| u_{ij}' - v_{ij}'(x) \|
    + \| u_{ij} - u_{ij}'\|\\
    &<
    C_0''\delta + C_0''\delta
    + \| u_{ij} - \pi(\gamma_{ij}) \|
    + \| \pi'(\gamma_{ij}) - u_{ij}' \| +\mbox{}\\
    &\quad
    + \| \pi(\gamma_{ij}) - \pi'(\gamma_{ij}) \|\\
    &<
    2C_0''\delta
    + 20\delta + 20\delta + d(\pi, \pi')\\
    &< C_0\delta + d(\pi, \pi').
  \end{align*}
  It follows that $d(\mathbf{v}, \mathbf{v'}) <C_0\delta + d(\pi,
  \pi')$. \qedhere
\end{proof}

\begin{proof}[Proof of Theorem~\ref{thm:approx-inverse}]
  We prove (1) from the statement of the theorem first.  Let
  $\varepsilon_1 = 1/ 140 L C_0$, $\delta_1 = 1/ 140 L C_0$, and $C_1
  = 70 K C_0^2$.  ($C_0$ is provided by
  Theorem~\ref{thm:main-correspondence}, $K$ by
  Lemma~\ref{lem:bound-on-R} and $L$ by Notation~\ref{not:ell-and-L}).
  Let $0 < \varepsilon < \varepsilon_1$ and suppose $\mathbf{v} = \{
  v_{ij}\colon c_{ij}\to \GL(A) \}$ is an $\varepsilon$-flat
  $\GL(A)$-coordinate bundle on $\Lambda$.  Let $\pi =
  \alpha(\mathbf{v})$ and $\mathbf{v'} = \{ v'_{ij}\colon c_{ij}\to
  \GL(A) \} = \beta(\pi)$.  Observe that $\pi$ is an
  $(\mathcal{F}_\Lambda, C_0\varepsilon)$-representation and
  $C_0\varepsilon < 1/140L$ so that the construction of $\beta(\pi)$
  from Section~\ref{sec:qrep-to-bundle} may be used.  We want to prove
  that
  \begin{displaymath}
    d( \mathbf{v}, \mathbf{v}' ) =
    \max_{ \simplex{i, j}\in \Lambda^{(1)} }
    \max_{ x\in c_{ij} }
    \| v_{ij}(x) - v'_{ij}(x) \| < C_1\varepsilon.
  \end{displaymath}

  Recall the notation $\baryv{ij} = v_{ij}( \bary{i, j} )$ from
  \ref{not:baryv}.  Since $\mathbf{v}$ is $\varepsilon$-flat and
  $\mathbf{v'}$ is $C_0^2\varepsilon$-flat, it follows that
  \begin{equation}
    \label{eq:8}
    d( \mathbf{v}, \mathbf{v'} )
    < \max_{ \simplex{i,j} \in \Lambda^{(1)} } (
    \varepsilon + \| \baryv{ij} - \baryv{ij}' \| + C_0^2\varepsilon ).
  \end{equation}

  Let $\simplex{i, j} \in \Lambda^{(1)}$ and set $g_{ij} :=
  s(\gamma_{ij}) \cdot \simplex{i, j}^{-1}$ as in
  Lemma~\ref{lem:bound-on-R}.  Applying the definition of
  $\tilde{\pi}$ (see Equation~\eqref{eq:1}) and the fact that
  $\mathbf{v}$ is normalized
  (Definition~\ref{def:normalized-coord-bundle}), we obtain
  \[
  \pi(\gamma_{i j})
  = \tilde{\pi}( s( \gamma_{i, j} ) )
  = \tilde{\pi}( \simplex{i, j} ) \tilde{\pi}(g_{ij})
  = \baryv{i j} \tilde{\pi}(g_{ij})
  \]
  The definition of $\beta(\pi)$ shows that $\baryv{ij}' = \bpi(
  \gamma_{ij} )$ and Proposition~\ref{prop:perturb-qrep} implies $\|
  \pi( \gamma_{i j} ) - \bpi( \gamma_{i j} ) \| < 20 C_0 \varepsilon$.
  Thus
  \begin{align}
    \begin{split}
      \label{eq:9}
      \| \baryv{ij} - \baryv{ij}' \| &< \| \baryv{ij} - \pi(
      \gamma_{ij} ) \|
      + \| \pi( \gamma_{i j} ) - \baryv{ij}' \|\\
      &= \| \baryv{ij}( 1 - \tilde{\pi}(g_{ij})) \|
      + \| \pi( \gamma_{ij} ) - \bpi( \gamma_{ij} ) \|\\
      &< \| \baryv{ij} \| \| 1 -
      \tilde{\pi}(g_{ij}) \| + 20C_0\varepsilon\\
      &< (1 + \varepsilon) \| 1 - \tilde{\pi}(g_{ij}) \| +
      20 C_0 \varepsilon.
    \end{split}
  \end{align}
  Lemma~\ref{lem:bound-on-R} guarantees that $\| 1_A -
  \tilde{\pi}(g_{ij}) \| < K\varepsilon$.  In combination with
  \eqref{eq:8} and \eqref{eq:9} this proves that
  \[
  d( \mathbf{v}, \mathbf{v}' ) <
  \varepsilon + C_0^2\varepsilon +
  (1 + \varepsilon)K \varepsilon +
  20 C_0 \varepsilon < C_1 \varepsilon.
  \]

  We prove (2) from the statement of the theorem.  Let $0 < \delta <
  \delta_1$ and suppose $\pi\colon \Gamma\to \GL(A)$ is an
  $(\mathcal{F}_\Lambda, \delta)$-representation.  Let $\mathbf{v} =
  \{ v_{ij} \} = \beta(\pi)$ (this is a $C_0\delta$-flat
  $\GL(A)$-coordinate bundle) and let $\bpi\colon \Gamma\to \U(A)$ be
  given by Proposition~\ref{prop:perturb-qrep}.  Let also $\pi' =
  \alpha(\mathbf{v})$ (this is an $(\mathcal{F}_\Lambda,
  C_0^2\delta)$-representation of $\Gamma$ to $\GL(A)$).  We want to
  prove that
  \[
  d( \pi, \pi' ) = \max_{\gamma\in \mathcal{F}_\Lambda}
  \| \pi(\gamma) - \pi'(\gamma) \| < C_1\delta.
  \]

  Suppose $\simplex{i, j} \in \Lambda^{(1)}$.  As above, we may write
  $s(\gamma_{i j})=\simplex{i, j} \cdot g_{ij}$ where by
  Lemma~\ref{lem:bound-on-R}, $\| \tilde{\pi}'(g_{ij}) - 1_A \| < K
  C_0\delta$.

  First notice that $\baryv{ij} = \bpi(\gamma_{ij})$ by the definition
  of $\beta(\pi) = \mathbf{v}$ (Definition~\ref{def:cocycle}).  Let
  $I$ be the unique path along $T$ from $i_0$ to $i$ and $J$ be the
  unique path from $i_0$ to $j$.  Observe that $\mathbf{v}$ is
  normalized since $\bpi( e ) = 1_A$; hence $\baryv{I} = 1_A =
  \baryv{J}$ because $I$ and $J$ are paths in the tree $T$.  Then, by
  the definition of $\alpha(\mathbf{v}) = \pi'$,
  \begin{displaymath}
    \pi'( \gamma_{i j} )
    = \tilde{\pi}' \big( s(\gamma_{i, j}) \big)
    = \tilde{\pi}'( \simplex{i, j} ) \cdot \tilde{\pi}'(g)
    = \baryv{I} \baryv{ij} \baryv{J}^{-1} \cdot \tilde{\pi}'(g)
    = \baryv{ij} \cdot \tilde{\pi}'(g)
    = \bpi(  \gamma_{i j}  ) \cdot \tilde{\pi}'(g).
  \end{displaymath}
  Therefore
  \begin{align*}
    \| \pi'( \gamma_{ij} ) - \pi( \gamma_{ij} ) \|
    &\leq \| \pi'(\gamma_{ij}) - \pi(\gamma_{ij}) \tilde{\pi}'(g) \|
    + \| \pi(\gamma_{ij}) \tilde{\pi}'(g) - \pi(\gamma_{ij}) \|\\
    &\leq \| \bpi(\gamma_{ij}) - \pi(\gamma_{ij})\|
    \| \tilde{\pi}'(g) \| + \| \pi(\gamma_{ij}) \|
    \| \tilde{\pi}'(g) - 1_A \|\\
    &< 20\delta(1 + K C_0^2\delta)
    + (1 + \delta)(K C_0^2\delta)\\
    &< C_1 \delta. \qedhere
  \end{align*}
\end{proof}

\section{Almost flat bundles}\label{sec:7}
The goal of this section is to connect the notion of almost flat
coordinate bundle from Definition~\ref{def:af-bundle}, which is
defined using simplicial structure and involves cocycles defined on
closed sets, with the notion of almost flat bundle over a compact
space from Definition~\ref{definition:to_alm_flat} below.

Almost flat bundles and $K$-theory classes appeared in the work Gromov
and Lawson \cite{Gromov-Lawson83}, of Connes, Gromov, and Moscovici
\cite{Connes-Gromov-etal90, Moscovici91, Skandalis91}.  In these
references a vector bundle over a Riemannian manifold is called
$\varepsilon$-flat if there is a metric-preserving connection with
curvature of norm less than $\varepsilon$.  Almost flat $K$-theory
classes have been studied in different contexts in
\cite{Manuilov-Mishchenko01, Mishchenko-Teleman05, Hanke-Schick06,
  Dadarlat12, Dadarlat14, Carrion-Dadarlat13}.  We adapt the
definition to bundles over topological spaces as in \cite{Dadarlat14}
and connect this with the version for simplicial complexes by proving
Proposition ~\ref{prop:bundle_extensions}.

Let $X$ be a compact space and let $\mathcal{V}=\{V_i\}$ be a finite
open cover of $X$.  A {\v C}ech 1-cocycle $\{ v_{ij}\colon V_i\cap
V_j\to \GL(A) \}$ satisfies $v_{ij}(x) =v_{ji}(x)^{-1}$ for all $x\in
V_i\cap V_j $ and $v_{ik}(x) = v_{ij}(x) v_{jk}(x)$ for all $x\in
V_i\cap V_j \cap V_k$.

\begin{definition}
  \label{definition:to_alm_flat}
  Let $\varepsilon\geq 0$.
  \begin{enumerate}
  \item A {\v C}ech 1-cocycle $\{ v_{ij}\colon V_i\cap V_j\to \GL(A)
    \}$ is \emph{$\varepsilon$-flat}, if
    \begin{enumerate}
    \item $v_{ij}(x) \in \U(A)_\varepsilon$ for all $x\in V_i\cap V_j
      $ ; and
    \item $\| v_{ij}(x) - v_{ij}(y) \| < \varepsilon$ for all $x, y
      \in V_i\cap V_j $.
    \end{enumerate}
  \item A principal $\GL(A)$-bundle $E$ over $X$ is
    $(\mathcal{V},\varepsilon)$-\emph{flat} if its isomorphism class
    is represented by an $\varepsilon$-flat cocycle $\{ v_{ij}\colon
    V_i\cap V_j\to \GL(A) \}$.
  \end{enumerate}
\end{definition}
It is clear that if $\mathcal{V}'$ is an open cover that refines
$\mathcal{V}$, then the restriction of $\{ v_{ij}\}$ to $\mathcal{V}'$
is also $\varepsilon$-flat.

We now establish a result that connects the notion of
$\varepsilon$-flat {\v C}ech 1-cocycles from
Definition~\ref{definition:to_alm_flat} with the notion of
$\varepsilon$-flat coordinate bundle in the simplicial sense as given
in Definition~\ref{def:af-bundle}.  Suppose that $X=|\Lambda|$ is the
geometric realization of a finite simplicial complex $\Lambda$.
Recall that $X$ has a (closed) cover $\mathcal{C}_\Lambda$ given by
dual cells $c_i$; see Section~\ref{sec:basic-def-not}.  Let $d$ be the
canonical metric for the topology of $X$ obtained using barycentric
coordinates.  Fix a sufficiently small number $\nu>0$ such that if we
set $V_i=\{x\in X \colon \dist(x,c_i)<\nu\},$ then for any finite
intersection
\begin{equation}
  \label{eq:intersection-nu0}
  V_{i_1}\cap V_{i_2}\cap \cdots \cap V_{i_k}
  \neq \emptyset
  \quad \Leftrightarrow\quad
  c_{i_1}\cap c_{i_2}\cap \cdots \cap c_{i_k}
  \neq \emptyset.
\end{equation}

Note that if $\{v_{ij}\}$ is as in
Definition~\ref{definition:to_alm_flat} and the cover $\{V_i\}$
satisfies \eqref{eq:intersection-nu0}, then the restriction of
$\{v_{ij}\}$ to $c_i\cap c_j \subset V_i\cap V_j$ is an
$\varepsilon$-flat coordinate bundle.
Proposition~\ref{prop:correspondence_af} below allows us reverse this
operation.

\begin{proposition}
  \label{prop:correspondence_af}
  There are numbers $\ep_0>0$ and $r>0$, depending only on $\Lambda$,
  such that for any $0<\ep<\ep_0$, any $\varepsilon$-flat
  $\GL(A)$-coordinate bundle $\{ v_{ij}\colon c_{i}\cap c_{j}\to
  U(A)_{\ep}\}$ on $\Lambda$, and any $\nu>0$ satisfying
  \eqref{eq:intersection-nu0}, there is an $r\ep$-flat cocycle
  $\{\tv_{ij}\colon V_i\cap V_j\to U(A)_{r\ep}\}$ that extends
  $v_{ij}.$
\end{proposition}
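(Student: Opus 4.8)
The goal is to extend a coordinate bundle defined on the closed dual cells $c_i\cap c_j$ to a genuine \v{C}ech cocycle on the open sets $V_i\cap V_j$, at the cost of enlarging the flatness constant by a factor $r$ depending only on $\Lambda$. The plan is to produce the extension cell-by-cell using a retraction of the relevant neighborhoods onto the dual cells, together with the nerve-type property \eqref{eq:intersection-nu0}. First I would observe that, since $\Lambda$ is a fixed finite complex and $\nu$ is chosen so that \eqref{eq:intersection-nu0} holds, there is a Lipschitz retraction $\rho_i\colon V_i\to c_i$ for each vertex $i$, with Lipschitz constant bounded by some $\kappa$ depending only on $\Lambda$ (indeed $\nu$ can be taken from a finite list of candidates determined by $\Lambda$, so one may absorb its dependence). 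More importantly, one needs the retractions to be \emph{compatible on overlaps}: on $V_i\cap V_j$ the point should be pushed into $c_i\cap c_j$. This is arranged by retracting first onto the open star of the carrier simplex and then radially toward barycenters; the combinatorics of the dual-cell decomposition (every point of $|\Lambda|$ lies in a unique $c_i^\sigma$ for its carrier $\sigma$ once we break ties) makes this canonical. Set $\rho_{ij}\colon V_i\cap V_j\to c_i\cap c_j$ to be such a compatible retraction and define $\tv_{ij}(x) := v_{ij}(\rho_{ij}(x))$ for $x\in V_i\cap V_j$.

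With this definition the cocycle identities are inherited directly: $\tv_{ij}(x)=v_{ij}(\rho_{ij}(x))=v_{ji}(\rho_{ij}(x))^{-1}=\tv_{ji}(x)^{-1}$, and on a triple overlap $V_i\cap V_j\cap V_k$ one uses a single common retraction $\rho_{ijk}$ landing in $c_i\cap c_j\cap c_k$ (nonempty by \eqref{eq:intersection-nu0}), so that $\tv_{ik}(x)=v_{ik}(\rho_{ijk}(x))=v_{ij}(\rho_{ijk}(x))v_{jk}(\rho_{ijk}(x))=\tv_{ij}(x)\tv_{jk}(x)$. Here the subtle point is consistency: the various retractions $\rho_{ij}$, $\rho_{ijk}$ must restrict to one another on smaller overlaps, which again follows from organizing them all as the restriction of one global map $|\Lambda|\to |\Lambda|$ (the identity composed with the canonical barycentric retraction of each $V_i$ onto $c_i$, which is simplicial-combinatorially coherent). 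This is the part that requires care but is essentially the standard fact that the $V_i$ form a good cover refining the dual-cell structure.

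For the flatness estimates, property (a) of Definition~\ref{definition:to_alm_flat} is immediate: $\tv_{ij}(x)=v_{ij}(\rho_{ij}(x))\in \U(A)_\varepsilon\subseteq \U(A)_{r\varepsilon}$. For property (b), given $x,y\in V_i\cap V_j$ we have $\|\tv_{ij}(x)-\tv_{ij}(y)\|=\|v_{ij}(\rho_{ij}(x))-v_{ij}(\rho_{ij}(y))\|<\varepsilon$ directly from condition (d) of Definition~\ref{def:af-bundle}, since $v_{ij}$ is $\varepsilon$-constant on all of $c_{ij}$ and $\rho_{ij}(x),\rho_{ij}(y)\in c_{ij}$. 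So in fact $r=1$ works for the continuity-of-$v_{ij}$ estimate; the only place one might lose a constant is if the retraction cannot be made to land exactly in $c_i\cap c_j$ but only within distance $O(\nu)$ of it, in which case one first extends $v_{ij}$ to a small collar by the formula $v_{ij}(\rho_i(x))$ using the single-index retraction and controls the discrepancy on overlaps using condition (d) again, picking up a universal constant $r$. I would state $\varepsilon_0$ small enough (e.g.\ $\varepsilon_0<1/(2r)$) so that $\U(A)_{r\varepsilon}\subseteq \GL(A)$ and everything stays invertible.

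The main obstacle I anticipate is the coherence of the family of retractions across all overlaps simultaneously — i.e.\ producing a single continuous map that witnesses all the $\rho_{ij}$ at once so the cocycle identity on triple (and higher) overlaps is automatic rather than requiring separate verification. This is where the precise choice of $\nu$ satisfying \eqref{eq:intersection-nu0} and the geometry of the dual cells (the fact that $c_{i_1}\cap\cdots\cap c_{i_k}$ is itself a subcomplex-like piece, and $V_{i_1}\cap\cdots\cap V_{i_k}$ deformation-retracts onto it) does the real work; once that geometric input is in hand, the $C^*$-algebraic estimates are, as noted, essentially trivial.
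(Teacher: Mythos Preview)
Your retraction approach runs into a genuine obstruction, not merely a matter of care in the construction. You require maps $\rho_{ij}\colon V_i\cap V_j\to c_i\cap c_j$ that (i) restrict to the identity on $c_i\cap c_j$ (so that $\tv_{ij}$ actually extends $v_{ij}$, as the proposition demands) and (ii) agree on triple overlaps $V_i\cap V_j\cap V_k$, landing in $c_i\cap c_j\cap c_k$ (so that the cocycle identity holds exactly). These two requirements are incompatible. Take $\Lambda$ to be a single $2$-simplex $\langle 0,1,2\rangle$ with barycenter $B$. The set $c_0\cap c_1$ is the segment from $B$ to the midpoint of $\langle 0,1\rangle$, and $c_0\cap c_1\cap c_2=\{B\}$. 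Any point $x\neq B$ on $c_0\cap c_1$ sufficiently close to $B$ lies within $\nu$ of $c_2$, hence $x\in V_0\cap V_1\cap V_2$. Condition (ii) then forces $\rho_{01}(x)\in c_0\cap c_1\cap c_2=\{B\}$, while condition (i) forces $\rho_{01}(x)=x\neq B$. So no coherent family of retractions of the kind you describe can exist, and the pullback formula $\tv_{ij}=v_{ij}\circ\rho_{ij}$ cannot yield an exact cocycle extending $v_{ij}$. Your fallback suggestion (allowing $\rho$ to land only near $c_i\cap c_j$) does not help: the clash is between the identity requirement and the triple-overlap requirement, not a matter of precision.

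One could relax (i) or (ii) to obtain an \emph{approximate} cocycle, with $\|\tv_{ik}-\tv_{ij}\tv_{jk}\|=O(\varepsilon)$ on triple overlaps, but correcting an approximate cocycle to an exact one is itself a nontrivial extension problem---essentially the content of the proposition. The paper takes a completely different route: it deduces the result from a more general extension statement (Proposition~\ref{prop:bundle_extensions}) proved by a double induction, first on the number $n$ of sets in the cover and then on the index $\ell$ of the new transition function $\tv_{\ell n}$ being constructed. At each step one prescribes $\tv_{\ell n}$ on the closed subsets where it is already forced by previously constructed data (this is where the cocycle identities are built in), checks these prescriptions are mutually compatible, and then extends using Dugundji's version of Tietze's theorem, which keeps the image inside the convex hull of the already-prescribed values and hence inside a ball of controlled radius around a fixed unitary.
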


Proposition~\ref{prop:correspondence_af} is a direct consequence of
Proposition~\ref{prop:bundle_extensions} below, whose content is of
independent interest in connection with extension properties of
principal bundles.

Let $Y$ be a closed subspace of a compact metric space $X$ and let
$\{U_i\}_{i=1}^n$ be a closed cover of $Y$.  For $\nu>0$ and $i\in\{1,
\dots, n\}$ let
\[
\tu_i:=\{x\in X : \dist(x, U_i)\leq \nu\}
\]
and set $\widetilde{Y}=\bigcup_{i=1}^n \tu_i$.  Fix $\nu>0$ small
enough such that for any finite intersection
\begin{equation}
  \label{eq:intersection-nu}
  \tu_{i_1}\cap \tu_{i_2}\cap \cdots \cap \tu_{i_k}
  \neq \emptyset
  \quad \Leftrightarrow\quad
  U_{i_1}\cap U_{i_2}\cap \cdots \cap U_{i_k}
  \neq \emptyset
\end{equation}

\begin{proposition} \label{prop:bundle_extensions}
  \mbox{}

  \begin{enumerate}
  \item For any cocycle $v_{ij}\colon U_i\cap U_j\to GL(A)$ on $Y$
    there exist $\nu>0$ satisfying \eqref{eq:intersection-nu} and a
    cocycle $\tv_{ij}\colon \tu_i\cap \tu_j\to \GL(A)$ that extends
    $v_{ij}$, i.e. $\tv_{ij}=v_{ij}$ on $U_i\cap U_j$.

  \item There exist $\ep_0\in (0,1)$ and a universal constant
    $r=r_n$
    that depends only on $n$ such that for any $0<\ep<\ep_0$, any
    $\ep$-flat cocycle $v_{ij}:U_i\cap U_j\to U(A)_{\ep}$ on $Y$, and
    any $\nu>0$ satisfying \eqref{eq:intersection-nu}, there is an
    $r\ep$-flat cocycle $\tv_{ij}\colon \tu_i\cap \tu_j\to
    U(A)_{r\ep}$ on $\widetilde{Y}$ which extends $v_{ij}.$
  \end{enumerate}
\end{proposition}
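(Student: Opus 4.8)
The plan is to prove Proposition~\ref{prop:bundle_extensions} by induction on $n$, the number of sets in the cover, extending one cocycle function at a time while keeping careful track of the flatness constant. For $n=1$ there is nothing to prove (the cocycle is trivial). For the inductive step, I would handle part (1) and part (2) simultaneously, since the extension procedure is the same and only the estimates differ. Suppose the result holds for covers of size $n-1$, and let $\{v_{ij}\}$ be a cocycle on $Y=\bigcup_{i=1}^n U_i$. First I would apply the inductive hypothesis to the cover $\{U_i\}_{i=1}^{n-1}$ of $Y'=\bigcup_{i=1}^{n-1} U_i$, obtaining $\nu'>0$ and extended functions $\tv_{ij}\colon \widetilde{U}_i^{\nu'}\cap \widetilde{U}_j^{\nu'}\to \GL(A)$ for $i,j<n$ (in the $\ep$-flat case, $r_{n-1}\ep$-flat). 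It remains to produce the functions $\tv_{in}$ for $i<n$ on the enlarged overlaps, after which the remaining functions are forced by the cocycle relation $\tv_{ij}=\tv_{in}\tv_{nj}$ wherever all three sets meet. The only genuinely new data to extend is a single function: $v_{in}$ is already prescribed on $U_i\cap U_n$ for each $i<n$, and the cocycle condition on $Y$ forces $v_{in}v_{nj}=v_{ij}$ on triple overlaps, so in effect one must extend a compatible family $\{v_{in}\}_{i<n}$ to $\{\tv_{in}\}_{i<n}$ on a neighborhood.

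The key step is a Tietze-type extension with norm control. I would first glue the prescribed functions $v_{in}$ (together with the already-extended $\tv_{ij}v_{jn}$ coming from the inductive data on $Y'$) into a single continuous $\GL(A)$-valued function defined on the closed set $\big(\bigcup_{i<n}(\widetilde U_i \cap U_n)\big)\cup(\text{the part already determined})$ — the compatibility of these pieces on their overlaps is exactly the cocycle condition, so they agree and glue continuously. Then, choosing a smaller $\nu$ so that \eqref{eq:intersection-nu} still holds and the relevant neighborhoods of $U_n$ stay inside the domains of the $\tv_{ij}$, I would extend this function continuously from the closed set to a closed neighborhood inside $\widetilde U_n$, using the Tietze extension theorem applied coordinatewise to the real and imaginary parts of matrix entries (or, intrinsically, to an $\R$-valued function after composing with a suitable retraction). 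For part (2), the crucial point is that one cannot simply extend into $\GL(A)$ and hope for flatness; instead I would extend into $\U(A)$ directly. Since $v_{in}(x)\in\U(A)_\ep$ and varies by less than $\ep$, on the closed set the values lie within $C\ep$ of a single unitary $u$; I would extend by first writing $v_{in}(x)=u\exp(ih(x))$ with $h$ self-adjoint and $\|h(x)\|$ small (valid because $\|u^*v_{in}(x)-1\|$ is small, using the logarithm on a neighborhood of $1$), then applying Tietze to the self-adjoint-valued function $h$ with the supremum-norm bound preserved, and finally setting $\tv_{in}=u\exp(i\tilde h)$. Composing with the polar-decomposition map $\omega$ of Lemma~\ref{lem:polar-decomp-estimate} if necessary cleans up the pieces coming from the inductive step. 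This keeps $\tv_{in}(x)\in\U(A)$, hence in $\U(A)_{C\ep}$, and controls the oscillation $\|\tv_{in}(x)-\tv_{in}(y)\|$ by $C\ep$ since $\|\tilde h(x)-\tilde h(y)\|\le \sup\|h\|\le C\ep$ is not automatic — rather one extends so that the oscillation bound on the closed set propagates, which one can arrange by extending $h$ with range in the (convex) ball of radius $\sup_{\text{closed set}}\|h\|$ and controlling moduli of continuity via a partition-of-unity interpolation rather than raw Tietze.

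The main obstacle I anticipate is precisely this last point: the naive Tietze extension controls only the sup norm of the extended function, not its modulus of continuity, whereas $\ep$-flatness \eqref{eq:intersection-nu} demands $\|\tv_{ij}(x)-\tv_{ij}(y)\|<r\ep$ for \emph{all} $x,y$ in the (now larger) overlap, not just those near $Y$. To handle this I would not use bare Tietze but instead build the extension explicitly via a partition of unity $\{\phi_\alpha\}$ subordinate to a fine cover of the annular region $\widetilde Y\setminus Y$ by sets of small diameter, setting $\tilde h=\sum_\alpha \phi_\alpha\, h(y_\alpha)$ for chosen basepoints $y_\alpha$ near each patch; standard estimates then bound both $\|\tilde h\|$ and its oscillation in terms of $\sup_Y\|h\|$ and the mesh, the latter being controllable because $h$ is uniformly continuous on the compact set $Y$ with modulus already $<\ep$. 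Tracking how the constant $r_n$ grows — at each of the $n-1$ inductive steps one multiplies by a fixed factor (from the logarithm/exponential distortion, the $\omega$ distortion, and the interpolation) — gives a bound $r_n\le C^n$ depending only on $n$, as required. Finally, Proposition~\ref{prop:correspondence_af} follows immediately by taking $Y=|\Lambda|$ with $U_i=c_i$, noting \eqref{eq:intersection-nu0} is exactly \eqref{eq:intersection-nu} for this cover and that $V_i=\widetilde U_i^\nu$, and that $n$ is the (fixed) number of vertices of $\Lambda$.
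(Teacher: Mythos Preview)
Your overall inductive framework matches the paper's: induct on $n$, apply the hypothesis to the first $n-1$ sets, then extend the functions $v_{in}$. There are, however, two points where your sketch diverges from the paper's argument in ways that matter.

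For part (2), the paper's device is much simpler than your logarithm detour. The observation is that ``$\ep$-flat'' means precisely that the image of $v_{ij}$ lies in a ball of small radius around any one of its values. Balls are convex. So one invokes Dugundji's strengthened Tietze theorem --- any continuous map from a closed subset of a metric space into a locally convex space extends so that the image of the extension lies in the convex hull of the original image --- applied \emph{directly} to the partially-defined $A$-valued function $v'_{\ell n}$. One checks that $v'_{\ell n}$ takes values in a ball $B(\baryv{\ell n}, r(\ell,n)\ep)$; the Dugundji extension $\tv_{\ell n}$ then automatically stays in that ball, yielding both the $\U(A)_{r\ep}$ condition and the oscillation bound at once, with no modulus-of-continuity bookkeeping. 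There is no need to pass through a logarithm, and your claim that one can write $v_{in}(x) = u\exp(ih(x))$ with $h$ self-adjoint is not correct as stated: $v_{in}(x)$ lies only in $\U(A)_\ep$, so $u^{-1} v_{in}(x)$ need not be unitary and its logarithm need not be skew-adjoint. Your partition-of-unity interpolation is essentially a rediscovery of Dugundji's construction, so the idea can be salvaged, but the extra layer is unnecessary.

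More seriously, your treatment of the cocycle compatibility on the \emph{enlarged} triple overlaps is incomplete. You prescribe $\tv_{in} = \tv_{ij}v_{jn}$ on $\tu_i \cap U_j \cap U_n$ (with $v_{jn}$ the original, unextended function), but this set is strictly smaller than $\tu_i \cap \tu_j \cap \tu_n$, and nothing in your construction forces $\tv_{in} = \tv_{ij}\tv_{jn}$ on the larger set. If one extends all the $v_{in}$ in parallel with only these constraints, the result will in general fail to be a cocycle on $\widetilde{Y}$. The paper resolves this with a \emph{second} induction, over $\ell \in L_n := \{i : U_i \cap U_n \neq \emptyset\}$ in increasing order: having already built $\tv_{jn}$ for $j < \ell$, one additionally prescribes $v'_{\ell n} = \tv_{\ell j}\tv_{jn}$ on the full $\tu_\ell \cap \tu_j \cap \tu_n$ for those $j$ (condition $(1')$), alongside $v'_{\ell n} = \tv_{\ell j}v_{jn}$ on $\tu_\ell \cap U_j \cap U_n$ for $j \geq \ell$ (condition $(2')$), verifies that all these prescriptions agree on their mutual overlaps, and only then extends. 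This sequential coupling of the extensions is the heart of the argument, and it is exactly what your sentence ``the remaining functions are forced by the cocycle relation'' glosses over.
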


\begin{proof}
  We begin with the proof of (1) and will explain subsequently how to
  adapt the argument to prove (2) as well.  We prove (1) by induction
  on the cardinality $n$ of the cover. Suppose that the statement is
  true for any integer $\leq n-1$. Let $v_{ij}\colon U_i\cap U_j\to
  \GL(A)$ be given with $1\leq i,j \leq n$.  Set
  $Y_{n-1}:=\bigcup_{i=1}^{n-1} U_i$. By the inductive hypothesis,
  there exist $\nu>0$ satisfying \eqref{eq:intersection-nu} and a
  cocycle $\tv_{ij}\colon \tu_i\cap \tu_j\to \GL(A)$, $1\leq i,j \leq
  n-1$, which extends $v_{ij}$.  Thus the following condition,
  labelled as $(n-1)$, is satisfied:
  \begin{equation}
    \label{n-1}
    \tv_{rs}=\tv_{rt}\tv_{ts}
    \text{ \ on \ } \tu_{r}\cap\tu_{t}\cap\tu_{s}
    \text{ \ for all \ } 1\leq r\leq t\leq s \leq n-1.
    \tag{$n-1$}
  \end{equation}

  To pass from $n-1$ to $n$ we proceed again by induction on
  increasing $k\in L_n$, where $L_n$ is the set of those integers
  $1\leq k \leq n$ with the property that $U_k\cap U_n \neq
  \emptyset$.  The inductive hypothesis that we make is that the
  functions $\{v_{in}\colon i \leq k, i\in L_n\}$ extend to functions
  $\tv_{in}\colon \tu_i\cap \tu_n \to GL(A)$ such that the following
  conditions (depending on $k$) are satisfied:
  \begin{gather}
    \label{1-k}
    \tv_{in}=\tv_{ij}\tv_{jn}
    \text{ \ on \ }
    \tu_{i}\cap\tu_{j}\cap\tu_{n}
    \text{ \ for \ }
    i,j \leq k
    \text{ \ with \ }
    i,j \in L_n.
    \tag{$1,k$}\\
    \label{2-k}
    \tv_{in}=\tv_{ij}v_{jn}
    \text{ \ on \ }
    \tu_{i}\cap U_{j}\cap U_{n}
    \text{ \ for \ }
    i \leq k \leq j
    \text{ \ with \ }
    i,j \in L_n.
    \tag{$2,k$}
  \end{gather}

  If $L_n$ reduces to $\{n\}$, then we simply define $\tv_{nn}=1$ and
  we are done.  Assume $L_n$ contains more than one element.

  Let $\ell$ be the smallest element of $L_n$ (so $\ell<n$).  To
  construct $\tv_{\ell n}$ we first define an extension $v'_{\ell n}$
  of $v_{\ell n}$ on suitable closed subsets of $\tu_\ell\cap \tu_n$
  as follows:
  \begin{equation}
    \label{0-prime}
    v'_{\ell\,n}=\tv_{\ell\,j}v_{jn}
    \text{ \ on \ }
    \tu_{\ell}\cap U_{j}\cap U_{n}
    \text{ \ for all \ }
    \ell\leq j<n, \ j\in L_n.
    \tag{$0'$}
  \end{equation}
  Let us observe that $v'_{\ell\,n}$ is well-defined since if
  $\ell\leq i\leq j<n$, $i,j\in L_n,$ then
  $\tv_{\ell\,i}v_{in}=\tv_{\ell\,j}v_{jn}$ on $\tu_{\ell}\cap
  U_{i}\cap U_{j}\cap U_{n}$ if and only if
  $v_{in}=\tv_{i,\ell}\tv_{\ell j}v_{jn}$ on the same set.  In view of
  condition $(n-1)$ this reduces to the equality
  $v_{in}=\tv_{ij}v_{jn}$ on $U_{i}\cap U_{j}\cap U_{n}$, which holds
  true since $\tv_{ij}$ extends $v_{ij}$. By Tietze's theorem we can
  now extend the function $v'_{\ell\,n}$ defined by ($0'$) to a
  continuous function $\tv_{\ell\,n}\colon \tu_{\ell}\cap \tu_{n}\to
  A$. Since $\GL(A)$ is open in $A$ we will have that
  $\tv_{\ell\,n}(x)\in \GL(A)$ for all $x\in \tu_{\ell}\cap \tu_{n}$
  provided that $\nu$ is sufficiently small.  We need to very that
  $\tv_{\ell n}$ satisfies $(1,\ell)$ and $(2,\ell)$.  Condition
  $(1,\ell)$ amounts to $\tv_{\ell n}=\tv_{\ell\ell}\tv_{\ell n}$ on
  $\tu_{\ell}\cap\tu_{n}$ which holds since
  $\tv_{\ell\ell}=1$. Condition $(2,\ell)$ reduces to $\tv_{\ell
    n}=\tv_{\ell j}v_{jn}$ on $\tu_{\ell}\cap U_{j}\cap U_{n}$ for $
  \ell \leq j $ with $j \in L_n$. This holds true in view of ($0'$)
  and so the base case for the induction is complete.

  Fix $k\in L_n$, $k<n$ and suppose now that we have constructed
  $\tv_{in}\colon \tu_i\cap \tu_n\to \GL(A)$ for all $i\in L_n$ with
  $i\leq k$ such that the conditions ($1,k$) and ($2,k$) are
  satisfied.  Let $\ell \in L_n$ be the successor of $k$ in $L_n$. We
  may assume that $\ell<n$ for otherwise there is nothing to prove.
  We construct a map $\tv_{\ell n}$ on $\tu_{\ell}\cap\tu_{n}$ that
  satisfies the corresponding conditions $(1,\ell)$ and $(2,\ell)$ as
  follows.  The first step is to define an extension $v'_{\ell\,n}$ of
  $v_{\ell\,n}$ on suitable closed subsets of $\tu_{\ell}\cap \tu_n$
  as follows:
  \begin{gather}
    \label{1-prime}
    \tag{$1'$}
    v'_{\ell\,n}=\tv_{\ell\,i}\tv_{in}
    \text{ \ on \ }
    \tu_{\ell}\cap \tu_{i}\cap \tu_{n}
    \text{ \ for all \ }
    i \leq k, \ i \in L_n.\\
    \label{2-prime}
    \tag{$2'$}
    v'_{\ell\,n}=\tv_{\ell\,j}v_{jn}
    \text{ \ on \ }
    \tu_{\ell}\cap U_{j}\cap U_{n}
    \text{ \ for all \ }
    \ell\leq j<n, \ j\in L_n.
  \end{gather} %
  We need to observe that the conditions $(1')$ and $(2')$ are
  compatible so that $v'_{\ell,n}$ is well-defined and continuous.
  There are three cases to verify.  First we check that
  $\tv_{\ell,i}\tv_{in}=\tv_{\ell j}\tv_{jn}$ on $\tu_{\ell}\cap
  \tu_{i}\cap \tu_{j}\cap \tu_{n}$ for $i,j \leq k$, $i ,j\in L_n$.
  This is a consequence of conditions $(n-1)$ and $(1,k)$.  Second, we
  verify that $\tv_{\ell\,i}v_{in}=\tv_{\ell\,j}v_{jn}$ on
  $\tu_{\ell}\cap U_{i}\cap U_{j}\cap U_{n}$ for $\ell \leq i,j<n$,
  $i,j\in L_n$. Note that this holds if and only if
  $v_{in}=\tv_{i,\ell}\tv_{\ell j}v_{jn}$ on the same set.  In view of
  condition $(n-1)$ this reduces to the equality
  $v_{in}=\tv_{ij}v_{jn}$ on $U_{i}\cap U_{j}\cap U_{n}$, which holds
  true since $\tv_{ij}$ extends $v_{ij}$.  Finally we need to verify
  that $\tv_{\ell i}\tv_{in}=\tv_{\ell j}v_{jn}$ on $(\tu_{\ell}\cap
  \tu_{i}\cap \tu_{n})\cap (\tu_{\ell}\cap U_{j}\cap
  U_{n})=\tu_{\ell}\cap \tu_{i}\cap U_{j}\cap U_{n}$ for $i\leq k <
  \ell \leq j<n$, $i,j \in L_n$.  By $(n-1)$, this equality holds if
  and only if $\tv_{in}=\tv_{i\,j}v_{jn}$ on $\tu_{i}\cap U_{j}\cap
  U_{n}$.  The latter equality holds due to condition $(2,k)$ which is
  satisfied by the inductive hypothesis.  By Tietze's theorem we can
  now extend the function $v'_{\ell\,n}$ defined by ($1'$) and ($2'$)
  to a continuous function $\tv_{\ell\,n}\colon \tu_{\ell}\cap
  \tu_{n}\to A$. Since $\GL(A)$ is open in $A$ we will have that
  $\tv_{\ell\,n}(x)\in \GL(A)$ for all $\tu_{\ell}\cap \tu_{n}$
  provided that $\nu$ is sufficiently small.  It is clear that the
  functions $(\tv_{i n})_{i\leq \ell}$ satisfy the conditions
  $(1,\ell), (2,\ell)$ as a consequence of ($1'$), ($1,k$), ($2'$) and
  ($2,k$). This completes the inductive step from $k$ to $\ell$ and
  hence from $n-1$ to $n$. During this step we had to pass to a
  possibly smaller $\nu$ but this does not affect the conclusion.

  (2). The proof follows the pattern of the proof of (1) with one
  important modification.  Namely we use the following strengthened
  version of Tietze's theorem due to Dugunji \cite{Dugundji51}.  Let
  $X$ be an arbitrary metric space, $Y$ a closed subset of $X$, $A$ a
  locally convex linear space and $f\colon Y\to A$ a continuous
  map. Then there exists an extension $\widetilde{f}\colon X \to A$ of
  $f$ such that $\widetilde{f}(X)$ is contained in the convex hull of
  $f(Y)$.

  Fix a point $x_{ij}$ in each nonempty intersection $U_i\cap U_j$ and
  set $\baryv{ij}:=v_{ij}(x_{ij})$. Since the cocycle is $\ep$-flat,
  we have that $\|v_{ij}(x)-\baryv{ij}\|<\ep$.

  Let us define positive numbers $r(i,j)$ for $1 \leq i \leq j \leq n$
  as follows. If $i=j$, then $r(i,j)=1$. If $i<j$, $r(i,j)$ is defined
  by the following recurrence formula. Set $r_k=\max \{ r(i,j)\colon 1
  \leq i \leq j \leq k\}$ and $r_1=1$.  If $1\leq \ell <n$ then we
  define \( r(\ell,n) =(3r_{n-1}+7)\max\{r(i,n)\colon 1\leq i<
  \ell\}\) with the convention that $\max{\emptyset}=1$.

  We only need to consider the maps $\tv_{\ell n}$ with $\ell \in
  L_n=\{i: U_i\cap U_n \neq \emptyset\}$ and $\ell < n$.
  We proceed as in proof of (1) by induction on $n$ and $k\in L_n$
  with the additional provision that
  \begin{itemize}
  \item[$(3')$] $\tv_{ij}(\tu_i\cap \tu_j)\subset
    B(\baryv{ij},r(i,j)\ep)$, for all $ 1\leq i \leq j \leq n-1$ and
    for all $(i,j)$ with $i< k$, $i\in L_n$ and $j=n$.
  \end{itemize}

  The basic idea of the proof is to observe that it follows from the
  equations ($0'$), ($1'$) and ($2'$) that $v'_{\ell\,n}$ is close to
  $\tv_{\ell\,i}\tv_{in}$ (if $i < \ell$) or $\tv_{\ell\,i}v_{in}$ (if
  $\ell \leq i$) both of which are near $\baryv{\ell\,i}\baryv{in}$
  and hence $v'_{\ell,n}$ is close to $\baryv{\ell n}$.  It will
  follow that the image of $v'_{\ell n}$ is contained in a ball
  $B(\baryv{\ell n},r(\ell,n)\ep)$ where $r(\ell,n)$ is a universal
  constant computed recursively from previously determined
  $r(i,j)$. Therefore we can invoke the strengthened version of
  Tietze's theorem of \cite{Dugundji51} to extend $v'_{\ell,n}$ to a
  continuous map $\tv_{\ell,n}$ with values in convex open ball
  $B(\baryv{\ell n},r(\ell,n)\ep)$.

  Fix $k\in L_n$, $k<n$. By the inductive hypothesis, suppose that we
  have constructed $\tv_{ij}$ and they satisfy ($3'$).  We need to
  consider two cases.  The first is the case when $k=\min
  L_n$. Letting $\ell=k$ and $0<\varepsilon <1$, then from condition
  ($0'$), for each $x \in \tu_{\ell}\cap U_{j}\cap U_{n}$ with
  $\ell\leq j<n$, $j\in L_n:$
  \begin{align*}
    \|v'_{\ell\,n}(x)-\baryv{\ell\,n}\|
    &\leq
    \| \tv_{\ell\,j}(x)-\baryv{\ell\,j}\|
    \|v_{jn}(x)\|+\|\baryv{\ell\, j}\|
    \|v_{jn}(x)-\baryv{in}\| +\mbox{}\\
    &\quad +
    \| \baryv{\ell\, j} - \baryv{jn} \baryv{\ell\, n} \|\\
    &<
    r(\ell,i)\ep(1+\ep) + \ep(1+\ep) + \ep(3+2\ep)\\
    &\leq
    (3r_{n-1} + 7)\ep \leq  r(\ell,n)\ep.
  \end{align*}

  Let $\ell$ be the successor of $k$ in $L_n$. We may assume that
  $\ell<n$ otherwise we are done.  If $x\in \tu_{\ell}\cap U_{j}\cap
  U_{n}$ with $\ell\leq j<n$, $j\in L_n,$ then using ($2'$) it follows
  just as above that $\|v'_{\ell\,n}(x)-\baryv{\ell\,n}\|\leq
  (3r_{n-1}+7)\ep=r(\ell,n)\ep.$ On the other hand, if
  $x\in\tu_{\ell}\cap \tu_{i}\cap U_{n}$ with $ i<\ell $, $i\in L_n,$
  then using ($1'$) we have
  \begin{align*}
    \| v'_{\ell\,n}(x) - \baryv{\ell\,n} \|
    &\leq
    \| \tv_{\ell\,i}(x) - \baryv{\ell\,i} \|
    \| \tv_{in}(x)\| + \|\baryv{\ell\, i} \|
    \| \tv_{in}(x) - \baryv{in} \|
    + \|\baryv{\ell\, i} - \baryv{in}\baryv{\ell\, n} \|\\
    &<
    r(\ell,i)\ep(1 + ( r(i,n)+1 )\ep)
    + ( 1+\ep )r(i,n)\ep + \ep( 2+3\ep )\\
    &\leq
    (3r(\ell,i) + 7)r(i,n)\\
    &\leq
    (3r_{n-1}+7)\max\{r(i,n)\colon i< \ell, i\in L_n\}\\
    &\leq  r(\ell,n)\ep.
  \end{align*}
  In view of this estimates we can extend $v'_{\ell n}$ to $\tv_{\ell
    n}$ using the strengthened version of Tietze's theorem so that
  $\tv_{\ell n}(\tu_{\ell}\cap \tu_n)\subset B(\baryv{\ell
    n},r(\ell,n)\ep)$.  It follows that
  $\|\tv_{ij}(x)-\baryv{ij}\|<r_n \ep$ for all $x\in \tu_i\cap \tu_j$.
  This completes the proof.
\end{proof}

\section{Almost flat K-theory classes and the K-theoretical
  MF-property}
\label{sec:an-appl}

One of the motivations for this paper is the detection of nontrivial
K-theory elements of a group C*-algebra, via lifting of homomorphisms
$K_0(C^*(\Gamma))\to\Z$ to quasi-representations $C^*(\Gamma)\to
M_m(\C)$.  Suppose that the full assembly map is a bijection for a
discrete group $\Gamma$.  Roughly speaking, our main result states
that the quasi-representations $C^*(\Gamma)\to M_m(\C)$ which induce
interesting partial maps on K-theory are as abundant as the
non-trivial almost flat K-theory classes of the classifying space
$B\Gamma$.  More generally, for a C*-algebra $B$ we consider the
connection between almost flat K-theory classes in
$K_0(C(B\Gamma)\otimes B)$ and quasi-representations $C^*(\Gamma)\to
M_m(B)$ that implement a given homomorphism $K_*(C^*(\Gamma))\to
K_*(B)$.

Let $A$ be a unital C*-algebra.  A quasi-representation $\pi\colon
\Gamma\to \GL(A)$ extends to a unital linear contraction $\pi\colon
\ell^1(\Gamma) \to A$ in the obvious way.  We like to think of $\pi$
as ``inducing'' a partially defined map $\pi_\sharp\colon K_0(\ell^1
(\Gamma))\to K_0(A)$ (cf. \cite{Dadarlat12,Dadarlat14}).  We briefly
recall the definition of $\pi_\sharp$.  In the definition we write
$\chi$ for the function \( \zeta\mapsto \frac{1}{2\pi i} \int_C (z -
\zeta)^{-1} dz, \) where $C = \{ z\in \C : |z-1| = 1/4 \}$.

\begin{definition}[c.f. \cite{Dadarlat12}]\label{def:pushforward}
  Let $D$, $B$ be Banach algebras and let $\pi\colon D\to B$ be a
  unital contractive map.  Let $p\in M_m(\C)\otimes D$ be an
  idempotent and let $x= (\id_m\otimes \pi) (p) \in M_m(\C)\otimes B$.
  Define \( \pi_\sharp(p) = [ \chi(x) ] \in K_0(B) \) whenever $\| x^2
  - x\| < 1/4$. In a similar manner, one defines the pushforward
  $\pi_\sharp(u)\in K_1(B)$ of an invertible element $u\in
  M_m(\C)\otimes D$ as the class of $[(\id_m\otimes \pi) (u)],$ under
  the assumption that $\pi$ is (sufficiently) approximately
  multiplicative on a suitable finite subset of $D$ that depends on
  $u$.
\end{definition}

In general $\pi_\sharp(p)$ is not necessarily equal to $\pi_\sharp(q)$
if $[p]=[q]$ in $K_0(D)$.  To bypass this nuisance, we use a discrete
version of the asymptotic homomorphisms of Connes and Higson.

A discrete asymptotic homomorphism from an involutive Banach algebra
$D$ to C*-algebras $B_n$ consists of a sequence $\{\pi_n\colon D \to
B_n\}_{n=1}^\infty$ of maps such that
\[
\lim_{n \to \infty}
\left\{
  \begin{array}{l}
    \|\pi_n(a+\lambda a')-\pi_n(a)-\lambda\pi_n(a')\|\\
    \|\pi_n(a^*)-\pi_n(a)^*\|\\
    \|\pi_n(aa')-\pi_n(a)\pi_n(a')\|
  \end{array}
\right\}
=0
\]
for all $a,a'\in D$ and $\lambda\in \mathbb{C}$. The sequence
$\{\pi_n\}_{n}$ induces a $*$-homomorphism \( D \to \prod_{n=1}^\infty
B_n/\sum_{n=1}^\infty B_n.  \) If each $B_n$ is a matrix algebra over
some fixed C*-algebra $B$, then this further induces a group
homomorphism \[ K_*(D)\to \prod_{n=1}^\infty K_*(B)/\sum_{n=1}^\infty
K_*(B).  \] A discrete asymptotic homomorphism gives a canonical way
to push forward an element $x\in K_*(D)$ to a sequence
$(\pi_{n\,\sharp}(x))$ of elements of $K_*(B)$, which is well-defined
up to tail equivalence: two sequences are tail equivalent, written
$(y_n)\equiv (z_n)$, if there is $m$ such that $y_n=z_n$ for all
$n\geq m$.  Note that one can adapt Definition~\ref{def:pushforward}
to maps which are approximately contractive (in addition to being
approximately multiplicative).

\begin{remark}
  \label{rem:disc-asymp-obs}
  Let $\Gamma$ be a discrete countable group with a finite set of
  generators $\mathcal{F}$. We need the following observations:

  \begin{enumerate}
  \item A sequence of $(\mathcal{F},\delta_n)$-representations
    $\{\pi_n\colon \Gamma \to U(B_n)\}_{n=1}^\infty$, with $\delta_n
    \to 0$ as $n\to \infty$, induces a discrete asymptotic
    homomorphism (still written $(\pi_n)_{n=1}^\infty$) from the
    involutive Banach algebra $\ell^1(\Gamma)$ to the C*-algebras
    $B_n$.
  \item A discrete asymptotic homomorphism $\{\pi_n\colon
    \ell^1(\Gamma) \to B_n\}_{n=1}^\infty$ as above induces a
    $*$-homomorphism $\pi_\infty \colon \ell^1(\Gamma) \to
    B_\infty:=\prod_{n=1}^\infty B_n/\sum_{n=1}^\infty B_n$ and hence
    a $*$-homomorphism $\bar{\pi}_\infty \colon C^*(\Gamma) \to
    B_\infty$ such as the following diagram is commutative.
    \[
    \xymatrix{
      \ell^1(\Gamma)\ar[r]^-{\pi_\infty} \ar[d]_{j} & B_\infty  \\
      C^*(\Gamma)\ar[ur]_-{\bar{\pi}_\infty }& }
    \]
    (where $j$ is the canonical map).
  \item Let $\{\bar{\pi}_n\colon C^*(\Gamma) \to
    B_n\}_{n=1}^\infty$ be a discrete asymptotic homomorphism given by
    some set-theoretic lift of $\bar{\pi}_\infty$. If $y\in
    K_*(\ell^1(\Gamma))$, then we have the following tail equivalence:
    \begin{displaymath}
      \big( \bar{\pi}_{n\sharp}(j_*(y)) \big)_{n=1}^\infty
      \equiv
      \big( {\pi}_{n\sharp}(y) \big)_{n=1}^\infty
    \end{displaymath}
  \end{enumerate}
\end{remark}

The results of \cite{Dadarlat12} will allow us to relate the
push-forward of elements in the image of the full Baum-Connes assembly
map with the almost flat bundles we have constructed.  The relationshp
involves the Mishchenko line bundle and its push-forward, which we now
discuss.

Let $\widetilde{X}$ be the universal cover of $X=|\Lambda|$. Consider
the dual cover $\mathcal{C}_{\Lambda}=\{c_i\}_i$ of $X$ and the
associated open cover $\mathcal{V}_{\nu}=\{V_i\}_i$ where $V_i=\{x\in
X\colon d(x,c_i)<\nu\}$.  The Mishchenko line bundle is the bundle
$\widetilde{X}\times_{\Gamma} \ell^1(\Gamma)\to X$, obtained from
$\widetilde{X}\times \ell^1(\Gamma)$ by passing to the quotient with
respect to the diagonal action of $\Gamma$.  It is isomorphic to the
bundle $E$ obtained from the disjoint union $\bigsqcup V_i \times
\ell^1(\Gamma)$ by identifying $(x, a)$ with $(x, \gamma_{ij}a)$
whenever $x\in V_i\cap V_j$, where $\gamma_{ij}\in \Gamma$ are as in
Notation~\ref{not:edge-path-grp}; see for example \cite[Lemma
3.3]{Carrion-Dadarlat13}.  Let $\{\chi_i\}$ be a partition of unity
subordinate to $\{V_i\}$.  It follows that the Mishchenko line bundle
corresponds to the class of the projection
\begin{equation}\label{eqn:Mish-proj}
  e := \sum_{i,j} e_{ij}\otimes \chi_i^{1/2}\chi_j^{1/2}\otimes \gamma_{ij}
  \in M_N(\C)\otimes C(X)\otimes \C[\Gamma],
\end{equation}
where $\{e_{ij}\}$ are the canonical matrix units of $M_N(\C)$ and $N$
is the number of vertices in $\Lambda$.  We have inclusions of rings
$\C[\Gamma]\subset \ell^1(\Gamma)\subset C^*(\Gamma)$. The class of
the idempotent $e$ in $K_0(C(X)\otimes \ell^1(\Gamma))$ or
$K_0(C(X)\otimes C^*(\Gamma))$ is denoted by $\ell$.

\begin{notation}
  \label{not:e-pi}
  For an $(\mathcal{F}_\Lambda, \varepsilon)$-representation
  $\pi\colon \Gamma\to U(A)$ as in Definition~\ref{def:q-rep}, we set
  \begin{displaymath}
    \ell_\pi := (\id_{C(X)}\otimes \pi)_\sharp(e) \in K_0(C(X)\otimes A).
  \end{displaymath}

  From Definition~\ref{def:cocycle} we get the coordinate bundle
  $\beta(\pi)$ associated with $\pi$.  Applying
  Proposition~\ref{prop:bundle_extensions} to $\beta(\pi)$ we obtain
  an almost flat cocycle $\{v_{ij} : V_i\cap V_j \to \GL(A)\}$.  Let
  $E_\pi$ be the bundle constructed from the disjoint union $\bigsqcup
  V_{i} \times A$ by identifying $(x, a)$ with $(x, v_{ij}(x)a)$ for
  $x$ in $V_{ij}$.
\end{notation}

\begin{proposition}
  \label{prop:push-Mish-flat}
  There is $\varepsilon_0$ such that, for any
  $0<\varepsilon<\varepsilon_0$ and any $(\mathcal{F}_\Lambda,
  \varepsilon)$-representation $\pi\colon \Gamma\to U(A)$,
  \[
  [E_\pi]= \ell_\pi 
  \in K_0(C(X)\otimes A).
  \]
\end{proposition}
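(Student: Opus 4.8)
The plan is to exhibit both $[E_\pi]$ and $\ell_\pi$ as $K_0$-classes of explicit idempotents in $M_N(C(X)\otimes A)$ that differ by a norm-small perturbation, and then conclude by stability of $K_0$-classes under such perturbations. Write $X=|\Lambda|$, fix the partition of unity $\{\chi_i\}$ subordinate to $\mathcal{V}_\nu=\{V_i\}$ used in \eqref{eqn:Mish-proj}, and recall from Notation~\ref{not:e-pi} that $\{v_{ij}\colon V_i\cap V_j\to\GL(A)\}$ is a genuine {\v C}ech $1$-cocycle for $\mathcal{V}_\nu$. Exactly as for the Mishchenko bundle (cf.\ the discussion around \eqref{eqn:Mish-proj} and \cite[Lemma 3.3]{Carrion-Dadarlat13}), the module of continuous sections of $E_\pi$ is isomorphic to $f\cdot(C(X)\otimes A)^N$, where
\[
f := \sum_{i,j} e_{ij}\otimes\chi_i^{1/2}\chi_j^{1/2}\otimes v_{ij} \in M_N(\C)\otimes C(X)\otimes A ,
\]
and $f$ is an idempotent precisely because the cocycle identity $v_{ij}v_{jk}=v_{ik}$ holds wherever $\chi_i\chi_j\chi_k$ is not identically zero. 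Hence $[E_\pi]=[f]$ in $K_0(C(X)\otimes A)$. On the other hand, $x:=(\id_{C(X)}\otimes\pi)(e)=\sum_{i,j} e_{ij}\otimes\chi_i^{1/2}\chi_j^{1/2}\otimes\pi(\gamma_{ij})$, and by Definition~\ref{def:pushforward} one has $\ell_\pi=[\chi(x)]$ as soon as $\|x^2-x\|<1/4$, where $\chi$ is the function attached to the contour $C=\{z\in\C:|z-1|=1/4\}$.

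Two estimates will be needed. First, if $\chi_i\chi_j\chi_k$ is not identically zero then $V_i\cap V_j\cap V_k\neq\emptyset$, so by \eqref{eq:intersection-nu0} the vertices $i,j,k$ lie in a common simplex of $\Lambda$ and therefore $\gamma_{ij}\gamma_{jk}=\gamma_{ik}$ in $\Gamma$. Using $\sum_j\chi_j=1$ one computes $(x^2-x)_{ik}=\sum_j\chi_j\,\chi_i^{1/2}\bigl(\pi(\gamma_{ij})\pi(\gamma_{jk})-\pi(\gamma_{ik})\bigr)\chi_k^{1/2}$, and since $\pi$ is an $(\mathcal{F}_\Lambda,\varepsilon)$-representation with $\gamma_{ij},\gamma_{jk},\gamma_{ij}\gamma_{jk}=\gamma_{ik}\in\mathcal{F}_\Lambda$, every summand has norm $<\varepsilon$; hence $\|x^2-x\|$ is $O(\varepsilon)$. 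Second, the coordinate bundle $\beta(\pi)$ satisfies $\|v_{ij}-u_{ij}\|<70L\varepsilon$ on $c_{ij}$ by \eqref{eq:7}, with $u_{ij}=\bpi(\gamma_{ij})$; Proposition~\ref{prop:perturb-qrep}(5) gives $\|u_{ij}-\pi(\gamma_{ij})\|<20\varepsilon$; and Proposition~\ref{prop:bundle_extensions}(2) guarantees that the extension of $\{v_{ij}\}$ to $V_i\cap V_j$ remains within $O(\varepsilon)$ of $u_{ij}$. Consequently $\|v_{ij}(y)-\pi(\gamma_{ij})\|=O(\varepsilon)$ for all $y\in V_i\cap V_j$, and therefore $\|f-x\|=O(\varepsilon)$. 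Choosing $\varepsilon_0>0$ small enough — but depending only on $\Lambda$ — so that $\beta(\pi)$ is defined and $\|x^2-x\|<1/4$, we get that $\ell_\pi$ is well-defined.

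For the final step, shrink $\varepsilon_0$ further so that the straight-line path $x_t:=(1-t)x+tf$, $t\in[0,1]$, satisfies $\|x_t^2-x_t\|<3/16$ for all $t$; this is a routine consequence of $\|x^2-x\|=O(\varepsilon)$, $\|f^2-f\|=0$ and $\|f-x\|=O(\varepsilon)$, with constants depending only on $\Lambda$. Since every $\lambda$ with $|\lambda-1|=1/4$ satisfies $|\lambda^2-\lambda|=|\lambda|/4\geq 3/16$, the spectrum of $x_t$ does not meet $C$ for any $t$, so $t\mapsto\chi(x_t)$ is a norm-continuous path of idempotents in $M_N(C(X)\otimes A)$ joining $\chi(x)$ to $\chi(f)$; and $\chi(f)=f$ because $f$ is already an idempotent, so $\chi(f)$ is its spectral idempotent for the eigenvalue $1$. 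Hence $\ell_\pi=[\chi(x)]=[f]=[E_\pi]$ in $K_0(C(X)\otimes A)$, as claimed. The one point requiring genuine care is the uniform bound $\|v_{ij}(y)-\pi(\gamma_{ij})\|=O(\varepsilon)$ on the enlarged patches — in particular verifying that the Tietze-type extension in Proposition~\ref{prop:bundle_extensions}(2) does not spoil this estimate; everything afterward is standard holomorphic functional calculus.
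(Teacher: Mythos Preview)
Your proof is correct and takes essentially the same approach as the paper: both represent $E_\pi$ by the idempotent $p=\sum_{i,j} e_{ij}\otimes\chi_i^{1/2}\chi_j^{1/2}\otimes v_{ij}$ and use the key estimate $\sup_{y\in V_i\cap V_j}\|v_{ij}(y)-\pi(\gamma_{ij})\|=O(\varepsilon)$ to conclude that $p$ and $(\id_{C(X)}\otimes\pi)(e)$ are norm-close. Your version simply makes explicit the functional-calculus homotopy that the paper leaves implicit when it concludes directly from $\|p-(\id_{C(X)}\otimes\pi)(e)\|<1/4$.
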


\begin{proof}
  Let $\varepsilon > 0$ and let $\pi\colon \Gamma\to U(A)$ be an
  $(\mathcal{F}_\Lambda, \varepsilon)$-representation.  Because the
  bundle $E_\pi$ is represented by the idempotent $p=\sum_{i,j}
  e_{ij}\otimes \chi_i^{1/2}\chi_j^{1/2}\otimes v_{ij}$, it follows
  that
  \[
  p-(\id_{C(X)}\otimes \pi)(e)=\sum_{i,j} e_{ij}\otimes
  \chi_i^{1/2}\chi_j^{1/2}\otimes (\pi(\gamma_{ij})-v_{ij}).
  \]
  Now, by the construction of $\{v_{ij}\}$, there is a constant $C$
  depending only on $\Lambda$ such that $ \sup_{x \in V_i\cap
    V_j}\|\pi(\gamma_{ij})-v_{ij}(x)\|<C \varepsilon$.  Therefore,
  $\|p-(\id_{C(X)}\otimes \pi)(e)\|<1/4$ if $\varepsilon_0$ is chosen
  to be sufficiently small.
\end{proof}

Let $X$ be a compact connected space and let $B$ be a unital
C*-algebra.  We consider locally trivial bundles $E$ over $X$ with
fiber finitely generated projective Hilbert-modules $F$ over $B$ and
structure group $\GL(A)$, where $A=L_B(F)$, the C*-algebra of
$B$-linear adjointable endomorphisms of $F$.  The K-theory group $
K_0(C(X)\otimes B)$ consists of formal differences of isomorphism
classes of such bundles.  Let $\mathcal{V}$ be a finite open cover of
$X$.  A bundle $E$ as above is $(\mathcal{V},\varepsilon)$-flat if it
admits an $(\mathcal{V},\varepsilon)$-flat associated
$\GL(A)$-principal bundle in the sense of
Definition~\ref{definition:to_alm_flat}(2) .

\begin{definition}
  An element $x \in K_0(C(X)\otimes B)$ is \emph{almost flat} if there
  is a finite open cover $\mathcal{V}$ of $X$ such that for every
  $\varepsilon > 0$ there are $(\mathcal{V},\varepsilon)$-flat bundles
  $E^\pm$ over $X$ such that $\alpha = [E^+] - [E^-]$. We say that $x
  \in K_0(C(X)\otimes B)$ is \emph{almost flat modulo torsion} if
  there is a torsion element $t\in K_0(C(X)\otimes B)$ such that $x-t$
  is almost flat.
\end{definition}

By the UCT given in \cite[Lemma 3.4]{Kasparov-Skandalis91}, the
Kasparov product
\[
KK(\mathbb{C},C(B\Gamma)\otimes B)\times
KK_*(C(B\Gamma),\mathbb{C})\to
KK_*(\mathbb{C},B),\quad
(x,z)\mapsto \langle x, z \rangle,
\]
induces an exact sequence
\begin{equation}
  \label{UCT}
  \Ext(K_*(B\Gamma),K_{*+1}(B))
  \rightarrowtail
  K_0(C(B\Gamma)\otimes B)
  \twoheadrightarrow
  \Hom(K_*(B\Gamma),K_*(B)).
\end{equation}
If $K_*(B)$ is finitely generated and torsion free, then the torsion
subgroup of $K_0(C(B\Gamma)\otimes B)$ coincides with the image of
$\Ext(K_*(B\Gamma),K_{*+1}(B))$.
\begin{theorem}
  \label{thm:almost-flat-surj}
  Let $\Gamma$ be a discrete countable group whose classifying space
  $B\Gamma$ is a finite simplicial complex and let $B$ be a unital
  C*-algebra.  Consider the following conditions:
  \begin{enumerate}
  \item For any $x\in K_0(C(B\Gamma)\otimes B)$ there is $t\in
    \Ext(K_*(B\Gamma),K_{*+1}(B))$ such that $x-t$ is almost
    flat.
  \item For any group homomorphism $h\colon K_*( C^*(\Gamma) )\to
    K_*(B)$ there exist discrete asymptotic homomorphisms
    $\{\pi_n^\pm\colon C^*(\Gamma)\to M_{k(n)}(B)\}_n$ such that \(
    (\pi^+_{n\,\sharp}(y) - \pi^-_{n\,\sharp}(y))\equiv (h(y)) \) for
    every $y$ in the image of the full assembly map $\mu\colon
    K_*(B\Gamma)\to K_*(C^*(\Gamma)).$
  \end{enumerate}
  Then (1) $\Rightarrow$ (2). Moreover if $K_*(B)$ is finitely
  generated and if $\mu$ is split injective, then (2) $\Rightarrow$
  (1).
\end{theorem}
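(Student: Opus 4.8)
The plan is to establish (1) $\Rightarrow$ (2) by first reducing a given homomorphism $h\colon K_*(C^*(\Gamma))\to K_*(B)$ to an almost flat K-theory class on $B\Gamma$ via the assembly map, then extracting quasi-representations from the almost flat bundles representing that class using Theorem~\ref{thm:main-correspondence}, and finally applying the pushforward machinery of \cite{Dadarlat12} together with Proposition~\ref{prop:push-Mish-flat}. For the converse, under the hypothesis that $\mu$ is split injective and $K_*(B)$ finitely generated, I would use the splitting to manufacture a homomorphism $h$ from a given class $x$, apply (2) to get quasi-representations, convert them back into almost flat coordinate bundles via $\beta$, and identify the resulting almost flat bundle class with $x$ modulo the $\Ext$ term using the UCT sequence \eqref{UCT}.

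\textbf{Proof of (1) $\Rightarrow$ (2).}
Let $h\colon K_*(C^*(\Gamma))\to K_*(B)$ be given. Precompose with the full assembly map $\mu\colon K_*(B\Gamma)\to K_*(C^*(\Gamma))$ to obtain $h\circ\mu\colon K_*(B\Gamma)\to K_*(B)$. By the surjection in \eqref{UCT}, there is $x\in K_0(C(B\Gamma)\otimes B)$ with $\langle x,\,\cdot\,\rangle = h\circ\mu$ under the pairing $KK(\mathbb{C},C(B\Gamma)\otimes B)\times KK_*(C(B\Gamma),\mathbb{C})\to KK_*(\mathbb{C},B)$. Applying hypothesis (1), choose $t\in\Ext(K_*(B\Gamma),K_{*+1}(B))$ so that $x-t$ is almost flat: there is a finite open cover $\mathcal{V}$ such that for each $\varepsilon>0$ we get $(\mathcal{V},\varepsilon)$-flat bundles $E^\pm_\varepsilon$ with $x-t=[E^+_\varepsilon]-[E^-_\varepsilon]$. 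Refining $\mathcal{V}$ to the dual cover of a triangulation $\Lambda$ of $B\Gamma$ (with the cover $\mathcal{V}_\nu$ as in Section~\ref{sec:7}), each $E^\pm_\varepsilon$ is represented by an $r\varepsilon$-flat $\GL(A^\pm)$-cocycle on $\mathcal{V}_\nu$, hence restricts to an $r\varepsilon$-flat coordinate bundle on $\Lambda$ (here $A^\pm=L_B(F^\pm)$). Normalizing via Proposition~\ref{prop:T-normalized} and feeding this into $\alpha$ from Theorem~\ref{thm:main-correspondence}, we obtain $(\mathcal{F}_\Lambda, C\varepsilon)$-representations $\pi^\pm_\varepsilon\colon\Gamma\to U(A^\pm)$. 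After amplifying $A^\pm$ inside a common matrix algebra $M_{k}(B)$ (the fibers $F^\pm$ are finitely generated projective $B$-modules, so $L_B(F^\pm)$ sits in $M_{k}(B)$ for suitable $k$), taking $\varepsilon=\varepsilon_n\to 0$ yields a discrete asymptotic homomorphism $\{\pi^\pm_n\colon\Gamma\to U(M_{k(n)}(B))\}$, which by Remark~\ref{rem:disc-asymp-obs} extends to a discrete asymptotic homomorphism $\{\bar\pi^\pm_n\colon C^*(\Gamma)\to M_{k(n)}(B)\}$. It remains to verify the pushforward identity. By Proposition~\ref{prop:push-Mish-flat}, for each $n$ we have $[E^\pm_{\varepsilon_n}]=\ell_{\pi^\pm_n}=(\id_{C(X)}\otimes\pi^\pm_n)_\sharp(\ell)$ in $K_0(C(B\Gamma)\otimes M_{k(n)}(B))$, where $\ell$ is the class of the Mishchenko projection $e$ from \eqref{eqn:Mish-proj}. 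Now for $y=\mu(z)$ with $z\in K_*(B\Gamma)$, the results of \cite{Dadarlat12} (applied as in the proof of \cite[Prop.~2.5]{Dadarlat14}) give $\pi^\pm_{n\sharp}(y)\equiv \langle [E^\pm_{\varepsilon_n}],\, z\rangle$, so $\pi^+_{n\sharp}(y)-\pi^-_{n\sharp}(y)\equiv \langle [E^+_{\varepsilon_n}]-[E^-_{\varepsilon_n}],\,z\rangle=\langle x-t,\,z\rangle$. Since $t\in\Ext(K_*(B\Gamma),K_{*+1}(B))$ pairs trivially with $z\in K_*(B\Gamma)$ (the pairing factors through the $\Hom$ quotient), this equals $\langle x,\,z\rangle=(h\circ\mu)(z)=h(y)$, as required. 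Finally, Remark~\ref{rem:disc-asymp-obs}(3) transfers the pushforward identity from $\ell^1(\Gamma)$ to $C^*(\Gamma)$.

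\textbf{Proof of (2) $\Rightarrow$ (1) under split injectivity of $\mu$ and $K_*(B)$ finitely generated.}
Let $s\colon K_*(C^*(\Gamma))\to K_*(B\Gamma)$ be a splitting of $\mu$, so $K_*(B\Gamma)=\operatorname{im}(s\mu)\oplus\ker(\mu)$ — wait; rather $\mu$ split injective gives a retraction $\rho\colon K_*(C^*(\Gamma))\to K_*(B\Gamma)$ with $\rho\mu=\id$. Given $x\in K_0(C(B\Gamma)\otimes B)$, let $\phi:=\langle x,\,\cdot\,\rangle\in\Hom(K_*(B\Gamma),K_*(B))$ be its image under the surjection in \eqref{UCT}, and set $h:=\phi\circ\rho\colon K_*(C^*(\Gamma))\to K_*(B)$, so that $h\circ\mu=\phi$. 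Apply (2) to $h$: we obtain discrete asymptotic homomorphisms $\{\bar\pi^\pm_n\colon C^*(\Gamma)\to M_{k(n)}(B)\}$ with $\pi^+_{n\sharp}(y)-\pi^-_{n\sharp}(y)\equiv h(y)$ for all $y$ in the image of $\mu$. Restrict these to $\Gamma$ to get $(\mathcal{F}_\Lambda,\delta_n)$-representations $\pi^\pm_n\colon\Gamma\to U(M_{k(n)}(B))$ with $\delta_n\to 0$ (after polar-decomposition tidying via Proposition~\ref{prop:perturb-qrep}). Feed each into $\beta$ from Theorem~\ref{thm:main-correspondence} to get $C_0\delta_n$-flat coordinate bundles on $\Lambda$, then extend via Proposition~\ref{prop:correspondence_af} to $r C_0\delta_n$-flat {\v C}ech cocycles on the cover $\mathcal{V}_\nu$; gluing gives $(\mathcal{V}_\nu, rC_0\delta_n)$-flat bundles $E^\pm_n$ over $B\Gamma$. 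Set $x_n:=[E^+_n]-[E^-_n]\in K_0(C(B\Gamma)\otimes B)$. By Proposition~\ref{prop:push-Mish-flat} again, $[E^\pm_n]=\ell_{\pi^\pm_n}$, and then by \cite{Dadarlat12} for $y=\mu(z)$, $\langle x_n,\,z\rangle\equiv\pi^+_{n\sharp}(y)-\pi^-_{n\sharp}(y)\equiv h(y)=\phi(z)$; since $K_*(B\Gamma)$ is finitely generated (as $B\Gamma$ is a finite complex), for $n$ large enough $\langle x_n,\,z\rangle=\phi(z)$ for all $z$ in a finite generating set, hence $\langle x_n,\,\cdot\,\rangle=\phi=\langle x,\,\cdot\,\rangle$. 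Thus $x-x_n$ lies in the kernel of the $\Hom$-map in \eqref{UCT}, i.e. $t_n:=x-x_n\in\Ext(K_*(B\Gamma),K_{*+1}(B))$; since $K_*(B)$ is finitely generated, this $\Ext$ group is finite, so passing to a further subsequence we may take $t_n=t$ constant. Then $x-t=x_n=[E^+_n]-[E^-_n]$ with $E^\pm_n$ being $(\mathcal{V}_\nu,\varepsilon)$-flat for arbitrarily small $\varepsilon$ (as $\delta_n\to 0$), witnessing that $x-t$ is almost flat. This proves (1).

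\textbf{Main obstacle.} The delicate point is the compatibility of the pushforward with the Kasparov pairing: making precise that $\pi^\pm_{n\sharp}(\mu(z))\equiv\langle[E^\pm_n],z\rangle$ requires invoking \cite{Dadarlat12} carefully — one must match the partial K-theory map induced by the quasi-representation $\pi^\pm_n$ (via $\ell_{\pi^\pm_n}$ and the Mishchenko bundle) with the cap product against the finitely many generators of $K_*(B\Gamma)$, uniformly in $n$, and handle the passage between $\ell^1(\Gamma)$ and $C^*(\Gamma)$ via Remark~\ref{rem:disc-asymp-obs}. The finiteness of $\Ext(K_*(B\Gamma),K_{*+1}(B))$ (used to stabilize $t_n$) is the other essential input, and is exactly where ``$K_*(B)$ finitely generated'' enters.
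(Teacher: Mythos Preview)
Your approach mirrors the paper's almost exactly: both directions proceed via the UCT sequence \eqref{UCT}, the correspondence $\alpha/\beta$ of Theorem~\ref{thm:main-correspondence}, Proposition~\ref{prop:push-Mish-flat}, and the pushforward results of \cite{Dadarlat12}, with the finiteness of $\Ext$ used to stabilize in the converse direction.

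One point in your (1)$\Rightarrow$(2) argument needs tightening. You write ``By Proposition~\ref{prop:push-Mish-flat}, for each $n$ we have $[E^\pm_{\varepsilon_n}]=\ell_{\pi^\pm_n}$,'' but that proposition only gives $[E_{\pi^\pm_n}]=\ell_{\pi^\pm_n}$, where $E_{\pi^\pm_n}$ is the bundle built from $\beta(\pi^\pm_n)$ --- not the original bundle $E^\pm_{\varepsilon_n}$ you started with. The missing link is $[E_{\pi^\pm_n}]=[E^\pm_{\varepsilon_n}]$, and this is exactly where Theorem~\ref{thm:approx-inverse} enters: the normalized coordinate bundle $\mathbf{v}$ underlying $E^\pm_{\varepsilon_n}$ satisfies $d\big(\beta\alpha(\mathbf{v}),\mathbf{v}\big)\leq C_1\varepsilon_n$, so for $\varepsilon_n$ small the two cocycles (after extension via Proposition~\ref{prop:correspondence_af}) are close enough to yield isomorphic bundles. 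The paper makes this step explicit; without it the chain from $[E^\pm_{\varepsilon_n}]$ to $\ell_{\pi^\pm_n}$ is not justified. Once this is inserted, your argument is complete and coincides with the paper's.
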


\begin{proof}
  (1) $\Rightarrow$ (2).  Let $h\colon K_*( C^*(\Gamma) )\to K_*(B)$
  be given. Then $h\circ \mu \in \Hom(K_*(B\Gamma),K_*(B))$.  By the
  UCT \eqref{UCT}, there is $x\in K_0(C(B\Gamma)\otimes B)$ such that
  \[
  h\big( \mu(z) \big) = \langle x, z \rangle
  \quad \text{ for all }\quad z\in K_*(B\Gamma).
  \]
  Note that if $t\in \Ext(K_*(B\Gamma),K_{*+1}(B))$, then $\langle
  x+t, z \rangle=\langle x, z \rangle$.  Thus without any loss of
  generality we may assume that $x$ is almost flat. Therefore there
  exist a finite open cover $\mathcal{V}$ of $B\Gamma$, a decreasing
  sequence $(\ep_n)$ of positive numbers converging to $0$ and two
  sequences $(E_n^\pm)$ of bundles over $B\Gamma$ such that $E_n^\pm$
  are $(\mathcal{V},\varepsilon_n)$-flat and satisfies $x = [E_n^+] -
  [E_n^-]$ for all $n$. By passing to barycentric subdivisions of the
  simplicial structure $\Lambda$ of $B\Gamma$ we may assume that the
  dual cover $\mathcal{C}_{\Lambda}$ refines the open cover
  $\mathcal{V}$.  By Proposition~\ref{prop:T-normalized} we may
  arrange that the coordinate bundles underlying the $(E_n^\pm)$ are
  normalized.

  Write $F_n^\pm$ for the fibers of $E_n^\pm$; these are finitely
  generated projective Hilbert $B$-modules and therefore embed as
  direct summands of some $B^{k(n)}$.  This gives full-corner
  embeddings $A_n^\pm:=L_B(F_n^\pm)\subset M_{k(n)}(B)$.  Using
  Proposition~\ref{prop:qrep-existence} and
  Proposition~\ref{prop:perturb-qrep} we associate with $E_n^\pm$
  quasi-representations $\pi_n^\pm\colon \Gamma\to \U(A_n^\pm)$ such
  that $\lim_{n\to \infty}
  \|\pi_n^\pm(st)-\pi_n^\pm(s)\pi_n^\pm(t)\|=0$ for all $s,t \in
  \Gamma$ and $\lim_{n\to \infty}
  \|\pi_n^\pm(s^{-1})-\pi^\pm_n(s)^*\|=0$ for all $s\in \Gamma$.  The
  sequences $(\pi_n^\pm)$ induce morphisms of groups $\Gamma\to
  U(A^\pm_\infty)$ and hence $*$-homomorphisms $\pi_\infty^\pm\colon
  \ell^1(\Gamma) \to A_\infty^\pm$ and $\bar{\pi}_\infty^\pm\colon
  C^*(\Gamma) \to A_\infty^\pm$ where $A_\infty^\pm=\prod_{n=1}^\infty
  A_n^\pm /\sum_{n=1}^\infty A_n^\pm$. Let $\bar{\pi}^\pm\colon
  C^*(\Gamma) \to \prod_{n=1}^\infty A_n^\pm$ be a set-theoretic
  lifting of $\pi_\infty^\pm$. Write
  $\bar{\pi}^\pm=(\bar{\pi}_n^\pm)_n$.  For a sufficiently
  multiplicative quasi-representation $\pi\colon \Gamma \to
  \U(A)_\delta$ and a sufficiently small $\delta>0$, we will denote by
  $E_\pi$ the corresponding almost flat bundle constructed using the
  cocycle $\beta(\pi)$ constructed in
  Proposition~\ref{prop:almost-flat-existence} (see
  Notation~\ref{not:e-pi}).  For $n$ sufficiently large we have that
  $[E_{\pi_n^\pm}] = [E_n^\pm]$. This follows from
  Theorem~\ref{thm:approx-inverse} and
  Proposition~\ref{prop:correspondence_af} since bundles whose
  cocycles are sufficiently close to each other are isomorphic.

  Let us recall that the full assembly map $\mu\colon K_*(B\Gamma)\to
  K_*(C^*(\Gamma))$ is implemented by the Mishchenko line bundle $\ell
  \in K_0( C(B\Gamma)\otimes C^*(\Gamma ))$, via the Kasparov product
  \[
  KK(\C,C(B\Gamma)\otimes C^*(\Gamma ))\times KK_*(C(B\Gamma),\C)\to
  KK_*(\C,C^*(\Gamma)),
  \]
  $(\ell,z)\mapsto \mu(z):=\langle \ell,z
  \rangle.$

  We have seen earlier \eqref{eqn:Mish-proj} that one can represent
  $\ell$ by a projection $e$ in matrices over $ C(B\Gamma)\otimes
  \mathbb{C}[\Gamma]$.  So long as $n$ is sufficiently large,
  Proposition~\ref{prop:push-Mish-flat} guarantees that
  $[E_{\pi_n^\pm}]$ equals $\ell_{\pi_n^\pm}$, the push-forward of $e$
  by $\mathrm{id}_{C(B\Gamma)}\otimes \pi_n^\pm$ in $K_0(
  C(B\Gamma)\otimes A_n^\pm)\cong K_0( C(B\Gamma)\otimes B)$. The
  latter isomorphism is induced by the full-corner embeddings
  $A_n^\pm\subset M_{k(n)}(B)$.  It follows that
  \[
  \langle x, z \rangle
  =
  \langle [E_n^+], z \rangle - \langle [E_n^-], z \rangle
  =
  \langle \ell_{\pi_{n}^{+}}, z \rangle
  - \langle \ell_{\pi_{n}^{-}}, z \rangle
  \]
  for all $z\in K_*(B\Gamma)$.  Let $\mu_{\ell^1}\colon
  K_*(B\Gamma)\to K_*(\ell^1(\Gamma))$ be Lafforgue's $\ell^1$-version
  of the assembly map.  It is known that $j_*\circ \mu_{\ell^1}=\mu$
  where $j\colon \ell^1(\Gamma)\to C^*(\Gamma)$ is the canonical map
  \cite{Lafforgue02}.  By \cite[Theorem~3.2 and Corollary
  3.5]{Dadarlat12} we have that
  \[
  \langle \ell_{\pi_{n}^{\pm}}, z \rangle
  =
  {\pi_n^\pm}_\sharp\big( \mu_{\ell^1}(z) \big)
  \]
  for each $z\in K^*(B\Gamma)$ so long as $n$ is sufficiently large.
  (For $z\in K_0(B\Gamma)$ we interpret ${\pi_n^\pm}_\sharp\big(
  \mu_{\ell^1}(z) \big)$ as
  ${\pi_n^\pm}_\sharp(p_z)-{\pi_n^\pm}_\sharp(q_z)$ where $p_z,q_z$
  are projections with $\mu_{\ell^1}(z)=[p_z]-[q_z]$. There is a
  similar interpretation of ${\pi_n^\pm}_\sharp\big( \mu_{\ell^1}(z)
  \big)$ for $z\in K_1(B\Gamma)$ obtained by replacing idempotents by
  invertible elements.)  Therefore,
  \[
  \big(
  \pi^+_{n\,\sharp}\big( \mu_{\ell^1}(z) \big)
  - \pi^-_{n\,\sharp}\big( \mu_{\ell^1}(z) \big)
  \big)
  \equiv
  (\langle x, z \rangle)
  =
  \big(h( \mu(z) )\big)
  \]
  for all $z\in K_*(B\Gamma)$.  From
  Remark~\ref{rem:disc-asymp-obs}(3) we deduce that
  \[
  \bar{\pi}^\pm_{n\,\sharp}\big( \mu(z) \big)
  =
  \bar{\pi}^\pm_{n\,\sharp}\big(j_*(\mu_{\ell^1}(z)) \big)
  \equiv
  {\pi}^\pm_{n\,\sharp}\big(\mu_{\ell^1}(z) \big)
  \]
  \[
  \big(
  \bar{\pi}^+_{n\,\sharp}\big( \mu(z) \big)
  - \bar{\pi}^-_{n\,\sharp}\big( \mu(z) \big)
  \big)
  \equiv
  \big(h( \mu(z) )\big)
  \]
  for all $z\in K_*(B\Gamma)$.  The discrete asymptotic homomorphisms
  $\{\bar{\pi}_n^\pm:C^*(\Gamma)\to M_{k(n)}(B)\}_n$ have the desired
  properties.  \vskip 4pt

  (2) $\Rightarrow$ (1).  Let us assume now that $K_*(B)$ is finitely
  generated and that $\mu$ is split injective.  Let $x\in
  K_0(C(B\Gamma)\otimes B)$ be given. We will find an almost flat
  element $y\in K_0(C(B\Gamma)\otimes B)$ such that $x-y\in
  \Ext(K_*(B\Gamma),K_{*+1}(B))$.  Since $\mu$ is split-injective by
  hypothesis (i.e. the image of $\mu$ is a direct summand of
  $K_*(C^*(\Gamma)),$ it follows from the exactness of the sequence
  \eqref{UCT} that there is a homomorphism $h\colon
  K_*(C^*(\Gamma))\to K_*(B)$ such that $h\circ \mu (z)=\langle x,z
  \rangle$ for all $z\in K_*(B\Gamma)$.  By the assumptions in (2)
  there are two discrete asymptotic homomorphisms $\{\pi_n^\pm\colon
  C^*(\Gamma)\to M_{k(n)}(B)\}_n$ such that
  \begin{equation}
    \label{eq:10}
    (
    \pi^+_{n\,\sharp}\big( \mu(z) \big)
    - \pi^-_{n\,\sharp}\big( \mu(z) \big)
    )
    \equiv
    \big(h\big( \mu(z) \big)\big)
    =
    (\langle x, z \rangle)
  \end{equation}
  for all $z\in K^0(B\Gamma)$.  By a standard perturbation argument we
  may assume that $\pi_n^\pm(s)\in U_{k(n)}(B)$ for all $n$ and $s\in
  \Gamma$.  Invoking \cite[Thm.~3.2 and Cor. 3.5]{Dadarlat12} we
  obtain that
  \[
  (\langle \ell_{\pi_{n}^{\pm}}, z \rangle)
  \equiv
  \big({\pi_n^\pm}_\sharp\big( \mu(z) \big)\big).
  \]
  By Proposition~\ref{prop:push-Mish-flat}, we have that
  $\ell_{\pi_{n}^{\pm}} = [E_{\pi_n^\pm}]$ where the bundles
  $E_{\pi_n^\pm}$ are $(\mathcal{V},\varepsilon_n)$-flat and
  $\varepsilon_n\to 0$.  If we set $x_n:=[E_{\pi_n^+}]-[E_{\pi_n^-}]$,
  it follows from ~\eqref{eq:10} that for any $z\in K_*(B\Gamma)$,
  there is $n_z$ such that $\langle x,z \rangle=\langle x_n,z \rangle$
  for $n\geq n_z$.  Since $K_*(B\Gamma)$ is finitely generated there
  exists $n_0$ such that $x-x_n \in H:=\Ext(K_*(B\Gamma),K_{*+1}(B))$
  for all $n\geq 0$.  Since $K_*(B)$ is finitely generated, the group
  $H$ is finite. Therefore after passing to a subsequence of $(x_n)$
  we may arrange that the sequence $(x-x_n)$ is constant and so there
  is $t\in H$ such that $x+t=x_n$ for all $n$. It follows that
  $y:=x+t$ is is almost flat and $x-y \in H$.
\end{proof}

If $B\Gamma$ is a finite complex and the group $K_*(B)$ is finitely
generated and torsion-free, it follows that the group
$\Ext(K_*(B\Gamma),K_{*+1}(B))$ is finite and by \eqref{UCT} it
coincides with the torsion subgroup of $K_0(C(B\Gamma)\otimes B)$.  By
taking $B=\C$ in Theorem~\ref{thm:almost-flat-surj} we obtain the
following.

\begin{theorem}
  \label{thm_cor:almost-flat-surj}
  Let $\Gamma$ be a discrete countable group whose classifying space
  $B\Gamma$ is a finite simplicial complex. If the full assembly map
  is bijective, then the following conditions are equivalent
  \begin{enumerate}
  \item All elements of $K^0(B\Gamma)$ are almost flat modulo
    torsion.
  \item For any group homomorphism $h\colon K_0( C^*(\Gamma) )\to \Z$
    there exist discrete asymptotic homomorphisms $\{\pi_n^\pm\colon
    C^*(\Gamma)\to M_{k(n)}(\C)\}_n$ such that \(
    (\pi^+_{n\,\sharp}(y) - \pi^-_{n\,\sharp}(y))\equiv (h(y)) \) for
    every $y \in K_0(C^*(\Gamma)).$
  \end{enumerate}
\end{theorem}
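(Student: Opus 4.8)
The plan is to obtain Theorem~\ref{thm_cor:almost-flat-surj} as the special case $B=\C$ of Theorem~\ref{thm:almost-flat-surj}. First I would record the facts about $\C$ that make the two implications of that theorem applicable: $K_0(\C)=\Z$ and $K_1(\C)=0$, so $K_*(\C)$ is finitely generated and torsion free; and a bijective assembly map $\mu$ is a fortiori split injective. Hence both ``(1)$\Rightarrow$(2)'' and ``(2)$\Rightarrow$(1)'' of Theorem~\ref{thm:almost-flat-surj} are available, and all that remains is to check that, under the hypothesis that $\mu$ is bijective, condition (1) of Theorem~\ref{thm:almost-flat-surj} literally coincides with condition (1) of Theorem~\ref{thm_cor:almost-flat-surj}, and likewise for condition (2).

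For condition (1), identify $K_0(C(B\Gamma)\otimes\C)$ with $K^0(B\Gamma)$. Since $K_1(\C)=0$, the group $\Ext(K_*(B\Gamma),K_{*+1}(\C))$ reduces to $\Ext(K_1(B\Gamma),\Z)$, which is finite because $B\Gamma$ is a finite complex, and by the remark following the sequence~\eqref{UCT} (applicable as $K_*(\C)$ is finitely generated and torsion free) its image in $K^0(B\Gamma)$ is precisely the torsion subgroup. Thus ``$x-t$ is almost flat for some $t\in\Ext(K_*(B\Gamma),K_{*+1}(\C))$'' is exactly ``$x$ is almost flat modulo torsion'', so Theorem~\ref{thm:almost-flat-surj}(1) with $B=\C$ is Theorem~\ref{thm_cor:almost-flat-surj}(1).

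For condition (2), a degree-preserving homomorphism $h\colon K_*(C^*(\Gamma))\to K_*(\C)$ has vanishing component $K_1(C^*(\Gamma))\to K_1(\C)=0$, hence is the same datum as a homomorphism $K_0(C^*(\Gamma))\to\Z$, and conversely any such homomorphism extends by zero on $K_1$. Because $\mu$ is bijective its image is all of $K_*(C^*(\Gamma))$, so the clause ``for every $y$ in the image of $\mu$'' becomes ``for every $y\in K_*(C^*(\Gamma))$''; on the $K_1$ summand both sides of $(\pi^+_{n\,\sharp}(y)-\pi^-_{n\,\sharp}(y))\equiv(h(y))$ lie in $K_1(\C)=0$, so only the $K_0$ part carries content. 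This makes Theorem~\ref{thm:almost-flat-surj}(2) with $B=\C$ coincide with Theorem~\ref{thm_cor:almost-flat-surj}(2), and the equivalence follows. The argument is entirely bookkeeping; the only step needing a little care is the identification of $\Ext(K_*(B\Gamma),K_{*+1}(\C))$ with the torsion subgroup of $K^0(B\Gamma)$ via the UCT, which is exactly the point recorded immediately before the statement.
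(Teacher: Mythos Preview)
Your proposal is correct and follows exactly the paper's approach: the paper derives Theorem~\ref{thm_cor:almost-flat-surj} from Theorem~\ref{thm:almost-flat-surj} by setting $B=\C$, invoking the remark (immediately preceding the statement) that when $K_*(B)$ is finitely generated and torsion free the image of $\Ext(K_*(B\Gamma),K_{*+1}(B))$ in \eqref{UCT} is the torsion subgroup. You have simply spelled out the bookkeeping---the identification of conditions (1) and (2) and the observation that bijectivity of $\mu$ gives split injectivity and makes ``$y$ in the image of $\mu$'' into ``all $y$''---more explicitly than the paper does.
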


With the terminology from the introduction, condition (2) above
amounts to saying that $C^*(\Gamma)$ is K-theoretically MF.

\begin{remark}
  Gromov indicates in \cite{Gromov96, Gromov95} how one constructs
  nontrivial almost flat $K$-theory classes for residually finite groups
  $\Gamma$ that are fundamental groups of even dimensional
  non-positively curved compact manifolds.
\end{remark}

%%% -----------------------------------------------
%%% Bibliography
%%% -----------------------------------------------

\bibliographystyle{amsplain}
% \bibliography{general-references.bib}

\providecommand{\bysame}{\leavevmode\hbox to3em{\hrulefill}\thinspace}
\providecommand{\MR}{\relax\ifhmode\unskip\space\fi MR }
% \MRhref is called by the amsart/book/proc definition of \MR.
\providecommand{\MRhref}[2]{%
  \href{http://www.ams.org/mathscinet-getitem?mr=#1}{#2}
}
\providecommand{\href}[2]{#2}

\end{document}